\documentclass[11pt, a4paper, reqno]{amsart}
\newcommand{\N}{\mathbb{N}}
\newcommand{\A}{\mathbb{A}}
\newcommand{\Tr}{\operatorname{Tr}}
\usepackage[utf8]{inputenc}
\usepackage{amscd}
\usepackage{amssymb}
\usepackage{newunicodechar}
\usepackage[utf8]{inputenc}
\usepackage{amsthm}
\usepackage{listings}
\usepackage[colorlinks=true,urlcolor=black]{hyperref}  
\usepackage{tikz}
\usetikzlibrary{calc}
\newcommand{\CC}{\mathbb{C}}

\newcommand{\FF}{\mathbb{F}}

\newcommand{\al}{\alpha}
\newcommand{\be}{\beta}
\newcommand{\g}{\gamma} 
\newcommand{\ka}{\kappa}

\usepackage{amsmath}
\usepackage{graphicx,enumitem}
\setlist[enumerate]{topsep=0pt, itemsep=0pt, parsep=0pt, partopsep=0pt}
\usepackage{latexsym}

\usepackage{bm}
\usepackage{upref}
\usepackage{color}
\usepackage{xcolor}
\usepackage{subcaption}
\usepackage[utf8]{inputenc}
\usepackage[T1]{fontenc}
\usepackage{adjustbox}
\theoremstyle{plain}
\newtheorem{theorem}{Theorem}[section]
\newtheorem{prop}[theorem]{Proposition}
\newtheorem{lemma}[theorem]{Lemma}
\newtheorem{corollary}[theorem]{Corollary}

\theoremstyle{definition}
\newtheorem{defi}[theorem]{Definition}
\newtheorem{ex}[theorem]{Example}

\theoremstyle{remark}
\newtheorem{remark}[theorem]{Remark}

\theoremstyle{plain}

\numberwithin{equation}{section}
\allowdisplaybreaks
\setlist{leftmargin=15pt,labelindent=15pt}
\setlist[enumerate]{wide=0pt, leftmargin=15pt, labelwidth=15pt, align=left}
\setlist[enumerate]{label=(\roman*),topsep=0pt,itemsep=2pt,parsep=0pt}
\newcommand{\Fq}{\mathbb{F}_q}

\newcommand{\F}{\mathbb{F}}

\setlist[enumerate]{label=(\roman*)}

\makeatletter
\renewcommand{\subsection}{\@startsection{subsection}{2}{\z@}%
  {-3.25ex \@plus -1ex \@minus -.2ex}%
  {1.5ex \@plus .2ex}%
  {\normalfont\bfseries}}
\makeatother

\title[Rational Points on Danielewski Surfaces and Polygonal Numbers]{Rational Points on Danielewski Surfaces over Finite fields and Polygonal Numbers}

\author[Sakshi Gupta]{Sakshi Gupta}
\author[Anit Kuckian]{Anit Kuckian}
\author[Indranath Sengupta]{Indranath Sengupta$^{*}$}
\address[] {\newline
Indian Institute of Technology Gandhinagar, 
    Palaj, Gandhinagar, Gujarat 382355, India}
\email[] {sakshi.gupta2693@gmail.com, anit.kuckian@iitgn.ac.in, indranathsg@iitgn.ac.in}
\thanks{$^{*}$Corresponding Author}
\keywords{Danielewski surfaces, double Danielewski surfaces, finite fields, rational points, character sums, K\"onig--Rados theorem, Polygonal numbers.}  
\subjclass[2010]{11T06, 11T99}

\begin{document}

\vspace{-9mm}
\begin{abstract}
We study the number of $\mathbb{F}_q$-rational points on Danielewski and double Danielewski surfaces over the finite field $\mathbb{F}_q$ and show an interesting connection with the polygonal numbers. 

\end{abstract}

\maketitle

\section{Introduction}\label{s1}

\noindent
Let $\Fq$ denote the finite field with $q$ elements and let $\Fq^\times=\Fq\setminus\{0\}$. Counting rational points on algebraic varieties over finite fields is an important problem in finite field theory and arithmetic geometry. Exact formulas are often difficult to obtain. Thus, one studies classes of affine varieties for which exact point counts can be obtained from the defining equations. In recent years, several authors have obtained exact formulas of $\Fq$-rational points for certain affine varieties over finite fields, especially for diagonal, triangular, and related systems of equations, using exponent matrices, Smith normal form, linear congruences, character sums, Jacobi sums, and Gauss sums; see, for example, \cite{ChenCao2023,HongZhu2025,HuQinZhao2019,ZhuQiangLi2023}.

\medskip
\noindent
The present paper follows this general direction for a different class of affine varieties, namely Danielewski surfaces and double Danielewski surfaces. Danielewski surfaces are closely related to the cancellation problem in affine algebraic geometry \cite{Danielewski}.
For a field $k$, the cancellation problem asks whether, for finitely generated $k$-algebras $A$ and $B$ and an indeterminate $U$, a $k$-algebra isomorphism
$$
A[U]\cong_k B[U]
$$
implies
$$
A\cong_k B.
$$

\medskip
\noindent
In 1989, Danielewski constructed counterexamples to the cancellation problem in dimension two over $\CC$ \cite{Danielewski}. For a polynomial $P(Z)\in\CC[Z]$ of degree at least $2$ with distinct roots and $d \in \mathbb{N}$, he considered the affine surface
$$
S_d:\ X^dY-P(Z)=0.
$$
If
$$
B_d=\CC[X,Y,Z]/(X^dY-P(Z))
$$
denotes the coordinate ring of $S_d$, then for $d_1\neq d_2$,
$$
B_{d_1}\not\cong_{\CC} B_{d_2}
\qquad\text{but}\qquad
B_{d_1}[U]\cong_{\CC} B_{d_2}[U].
$$
Thus, the rings $B_d$ give counterexamples to the cancellation problem in dimension two. Related results on the cancellation problem can be found in \cite{BhatwadekarGupta, GuptaSurvey,GuptaInvent,GuptaAdv}.

\medskip 
\noindent 
Motivated by Danielewski's construction, Gupta and Sen introduced the class of double Danielewski surfaces \cite{Gupta2019}. Over a field $k$, such a surface is defined in the affine $4$-space $\A^4_k$ with coordinates $X,Y,Z,T$ by the system 
$$ X^{d_1}Y-P(X,Z)=0, \qquad X^{d_2}T-Q(X,Y,Z)=0, $$
where $d_1,d_2\in\N$, $P(X,Z)\in k[X,Z]$ is monic in $Z$, $Q(X,Y,Z)\in k[X,Y,Z]$ is monic in $Y$, $\deg_Z P(X,Z)\geq 2$, and $\deg_Y Q(X,Y,Z)\geq 2$. If 
$$ B_{d_1,d_2} = k[X,Y,Z,T]/ \bigl(X^{d_1}Y-P(X,Z),\,X^{d_2}T-Q(X,Y,Z)\bigr) $$
denotes the corresponding coordinate ring, then under suitable hypotheses one has 
$$ B_{d_1,d_2}\not\cong_{k} B_{d_1,d_2+1} \qquad\text{but}\qquad B_{d_1,d_2}[U]\cong_{k} B_{d_1,d_2+1}[U]. $$
In this paper, we study Danielewski and double Danielewski surfaces over $\Fq$ and determine the number of their $\Fq$-rational points.

\medskip
\noindent
A further motivation comes from the appearance of polygonal-number patterns in the number of $\Fq$-rational points on certain double Danielewski surfaces. For suitable choices of the defining polynomials $P$ and $Q$, these point counts form subsequences of the square, hexagonal, octagonal, and decagonal number sequences as $q$ varies over prime powers.
Figures~\ref{fig:square-numbers} and \ref{fig:hexagonal-numbers} illustrate the first few square and hexagonal numbers. 

\begin{figure}[h]
\centering
\begin{tikzpicture}[scale=0.75, transform shape]
\colorlet{sline}{gray!70}
\def\dotsize{5.6pt}

\newcommand{\ballpts}[2]{%
  \node[
    circle,
    shade,
    ball color=#1,
    inner sep=0pt,
    minimum size=\dotsize
  ] at #2 {};
}

\newcommand{\SquareNest}[3]{%
  \begin{scope}[shift={(#1,0)}]
    \ifnum#2=1
      \ballpts{red}{(0,0)}
      \node at (0,-1.7) {#3};
    \else
      \coordinate (L0) at (0,0);
      \coordinate (T0) at (1.35,1.25);
      \coordinate (R0) at (2.70,0);
      \coordinate (B0) at (1.35,-1.25);

      \pgfmathtruncatemacro{\Nsq}{#2}

      \foreach \m in {\Nsq,...,2} {
        \pgfmathsetmacro{\s}{(\m-1)/(\Nsq-1)}
        \pgfmathtruncatemacro{\depth}{\Nsq-\m+1}

        \def\shellcolor{red}
        \ifnum\depth=2\relax \def\shellcolor{blue}\fi
        \ifnum\depth=3\relax \def\shellcolor{yellow!100!black}\fi
        \ifnum\depth=4\relax \def\shellcolor{green!80!black}\fi
        \ifnum\depth=5\relax \def\shellcolor{orange}\fi

        \coordinate (L) at ($(R0)!\s!(L0)$);
        \coordinate (T) at ($(R0)!\s!(T0)$);
        \coordinate (R) at (R0);
        \coordinate (B) at ($(R0)!\s!(B0)$);

        \draw[sline, line width=1pt] (L)--(T)--(R)--(B)--cycle;

        \pgfmathtruncatemacro{\mm}{\m-2}
        \foreach \P/\Q in {L/T,T/R,R/B,B/L} {
          \foreach \k in {0,...,\mm} {
            \pgfmathsetmacro{\t}{\k/(\m-1)}
            \ballpts{\shellcolor}{($(\P)!\t!(\Q)$)}
          }
        }
      }

      \node at (1.35,-1.7) {#3};
    \fi
  \end{scope}
}

\SquareNest{0.0}{1}{$S_4(1)=1$}
\SquareNest{1.2}{2}{$S_4(2)=4$}
\SquareNest{5.0}{3}{$S_4(3)=9$}
\SquareNest{9.0}{4}{$S_4(4)=16$}
\SquareNest{12.8}{5}{$S_4(5)=25$}

\end{tikzpicture}
\caption{The first five square numbers.}
\label{fig:square-numbers}
\end{figure}

\begin{figure}[h]
\centering
\begin{tikzpicture}[scale=0.73, transform shape]

\colorlet{hexline}{gray!70}
\def\dotsize{5.6pt}

\newcommand{\ballpt}[2]{%
  \node[
    circle,
    shade,
    ball color=#1,
    inner sep=0pt,
    minimum size=\dotsize
  ] at #2 {};
}

\newcommand{\HexShell}[5]{%
  \coordinate (#1A) at ($(#2)+(0:#3)$);
  \coordinate (#1B) at ($(#2)+(60:#3)$);
  \coordinate (#1C) at ($(#2)+(120:#3)$);
  \coordinate (#1D) at ($(#2)+(180:#3)$);
  \coordinate (#1E) at ($(#2)+(240:#3)$);
  \coordinate (#1F) at ($(#2)+(300:#3)$);

  \draw[hexline, line width=1pt]
    (#1A)--(#1B)--(#1C)--(#1D)--(#1E)--(#1F)--cycle;

  \pgfmathtruncatemacro{\mminusone}{#4-1}
  \foreach \P/\Q in {A/B,B/C,C/D,D/E,E/F,F/A}{
    \foreach \k in {0,...,\mminusone}{
      \pgfmathsetmacro{\t}{\k/#4}
      \ballpt{#5}{($(#1\P)!\t!(#1\Q)$)}
    }
  }
}

\ballpt{red}{(0,0)}
\node at (0,-2.0) {$S_6(1)=1$};

\coordinate (O2) at (2.1,0);
\HexShell{A}{O2}{1.25}{1}{red}
\node at (2.1,-2.0) {$S_6(2)=6$};

\coordinate (O3) at (5.5,0);
\def\Rthree{1.45}

\HexShell{B}{O3}{\Rthree}{2}{red}

\pgfmathsetmacro{\Rthreei}{\Rthree/2}
\coordinate (O3i) at ($(O3)+(\Rthree-\Rthreei,0)$);
\HexShell{C}{O3i}{\Rthreei}{1}{blue}

\node at (5.5,-2.0) {$S_6(3)=15$};

\coordinate (O4) at (9.5,0);
\def\Rfour{1.65}

\HexShell{D}{O4}{\Rfour}{3}{red}

\pgfmathsetmacro{\Rfouri}{\Rfour*2/3}
\coordinate (O4i) at ($(O4)+(\Rfour-\Rfouri,0)$);
\HexShell{E}{O4i}{\Rfouri}{2}{blue}

\pgfmathsetmacro{\Rfourii}{\Rfour/3}
\coordinate (O4ii) at ($(O4)+(\Rfour-\Rfourii,0)$);
\HexShell{F}{O4ii}{\Rfourii}{1}{yellow}

\node at (9.5,-2.0) {$S_6(4)=28$};

\coordinate (O5) at (14,0);
\def\Rfive{1.82}

\HexShell{G}{O5}{\Rfive}{4}{red}

\pgfmathsetmacro{\Rfivei}{\Rfive*3/4}
\coordinate (O5i) at ($(O5)+(\Rfive-\Rfivei,0)$);
\HexShell{H}{O5i}{\Rfivei}{3}{blue}

\pgfmathsetmacro{\Rfiveii}{\Rfive/2}
\coordinate (O5ii) at ($(O5)+(\Rfive-\Rfiveii,0)$);
\HexShell{I}{O5ii}{\Rfiveii}{2}{yellow}

\pgfmathsetmacro{\Rfiveiii}{\Rfive/4}
\coordinate (O5iii) at ($(O5)+(\Rfive-\Rfiveiii,0)$);
\HexShell{J}{O5iii}{\Rfiveiii}{1}{green}

\node at (14,-2.0) {$S_6(5)=45$};

\end{tikzpicture}
\caption{The first five hexagonal numbers.}
\label{fig:hexagonal-numbers}
\end{figure}

\noindent
This leads to the questions of determining the exact number of $\Fq$-rational points and finding polynomial families that produce such subsequences.

\medskip
\noindent
To determine these point counts, we first consider the Danielewski surface
$$
S_d=\{(\al,\be,\g)\in\A^3_{\Fq}\mid \al^d\be=P(\g)\}.
$$
It is enough to determine
$$
N_0=\#\{\g\in\Fq\mid P(\g)=0\},
$$
the number of roots of $P(Z)$ over $\Fq$.

\medskip
\noindent
For the double Danielewski surface
$$
S_{d_1,d_2}
=
\{(\al,\be,\g,\ka)\in\A^4_{\Fq}\mid
\al^{d_1}\be=P(\al,\g),~
\al^{d_2}\ka=Q(\al,\be,\g)\},
$$
the contribution from $\al\neq0$ can be computed directly, while the case $\al=0$ requires separate consideration. Hence, it is enough to determine
$$
N_0'
=
\#\{(\be,\g)\in\Fq^2\mid
P(0,\g)=0,~
Q(0,\be,\g)=0\}.
$$

\medskip
\noindent
Our aim is to count the $\Fq$-rational points directly from the equations defining the surfaces. We compute the numbers $N_0$ and $N_0'$ using algebraic and character-theoretic methods. Another approach is through the zeta-function \cite{Weil,swinnerton1967zeta}, which, however,  
involves counting points over all finite extensions $\mathbb{F}_{q^n}$ of a fixed base field. This requires substantially more counting information than is needed for our purpose and therefore we prefer a direct computation of $N_0$ and $N_0'$. 
\medskip

\noindent
To this end, we develop several methods for computing $N_0$ and $N_0'$. We use gcd-based root-counting methods, resultants, root-counting for special forms of $Q(0,Y,Z)$, additive and multiplicative character sums, Gauss sums, and the K\"onig--Rados theorem. These methods give explicit formulas in monomial, binomial, quadratic, and permutation-polynomial cases, as well as formulas in terms of ranks of circulant matrices. We also give a Macaulay2 algorithm \cite{M2} for computing the number of $\Fq$-rational points and verifying the formulas obtained in the paper.

\medskip
\noindent
The paper is organized as follows. In Section~\ref{S2}, we recall the finite-field notation and the preliminary results used later. Next, in Section~\ref{S3}, we show that it suffices to determine $N_0$, the number of roots of $P(Z)$ over $\Fq$, to obtain the point count of a Danielewski surface over $\Fq$. Further, in Section~\ref{S4}, we show that it suffices to determine $N_0'$, the number of $\Fq$-solutions of $P(0,Z)=0$ and $Q(0,Y,Z)=0$, to obtain the point count of a double Danielewski surface. Moreover, we obtain an additive-character formula in Theorem~\ref{thm:additive-master-double}, a formula in terms of ranks of circulant matrices in Theorem~\ref{thm:KR-general-N0}, an explicit formula for reduced equations of the form $Y^m=f(\g)$ in Proposition~\ref{prop:Ym-aZm}, and a formula for quadratic reduced equations for $q=p$ with $p$ odd in Theorem~\ref{thm:quadratic-N0}. In Section~\ref{S5}, we establish general bounds for $N_0'$. Thereafter, in Section~\ref{S6}, we give a Macaulay2 algorithm for computing and verifying the point counts. 
Finally, in Section~\ref{S7}, we give concluding remarks, examples whose point counts form subsequences of the square, hexagonal, octagonal, and decagonal number sequences as $q$ varies over prime powers, and directions for further work.

\section{Preliminaries}\label{S2}

\noindent
In this section, we recall basic results from finite field theory and character sums that will be used to count $\Fq$-rational points on Danielewski and double Danielewski surfaces.

\medskip
\noindent
Let $p$ be a prime number, let $r\geq 1$ be an integer, and let
$
\Fq:=\mathbb{F}_{p^r}
$
denote the finite field with $q=p^r$ elements. We write $\Fq^\times=\Fq\setminus\{0\}$ for the multiplicative group of nonzero elements of $\Fq$.

\medskip
\noindent
For affine spaces over $\Fq$, we use the notation
$
\A^3_{\Fq}(\Fq)=\Fq^3
~\text{and}~
\A^4_{\Fq}(\Fq)=\Fq^4.
$
The symbols $X,Y,Z,T$ denote coordinates, and $\al,\be,\g,\ka$ denote their values in $\Fq$. Thus, $(\al,\be,\g)\in\Fq^3$ and $(\al,\be,\g,\ka)\in\Fq^4$. The symbol $\sqcup$ denotes disjoint union.

\medskip
\noindent
Let $\overline{\Fq}$ denote an algebraic closure of $\Fq$. Since we count $\Fq$-rational points rather than points over $\overline{\Fq}$, the coordinates are required to lie in $\Fq$. For $a\in\overline{\Fq}$,
$$
a\in\Fq \quad\Longleftrightarrow\quad a^q-a=0.
$$
Equivalently,
$
U^q-U=\prod_{a\in\Fq}(U-a).
$
Thus, the relation $U^q-U=0$ restricts the value of $U$ to $\Fq$. This fact will be used in the gcd-based root-counting method and in the computational verification.

\medskip
\noindent
We next recall the character-theoretic results used later.

\begin{defi}
Let $G$ be a finite abelian group, written multiplicatively, with identity element $1_G$.
A \emph{character} of $G$ is a group homomorphism
$$
\chi : G \to \mathbb{C}^{\times},
$$
where $\mathbb{C}^{\times}$ denotes the multiplicative group of complex numbers of absolute value $1$. Thus
$$
\chi(g_1 g_2)=\chi(g_1)\chi(g_2)
\quad \text{for all } g_1,g_2\in G.
$$
In particular,
$$
\chi(g^{-1})=\chi(g)^{-1}=\overline{\chi(g)}
\quad \text{for all } g\in G,
$$
where the bar denotes complex conjugation.

\noindent
The \emph{trivial character} of $G$, denoted by $\chi_0$, is defined by
$$
\chi_0(g)=1 \quad \text{for all } g\in G.
$$
The set of all characters of $G$ is denoted by $\widehat G$.
\end{defi}

\begin{theorem}[\cite{LM13}, Theorem~5.4]\label{5.4}
Let $G$ be a finite abelian group. If $\chi$ is a nontrivial character of $G$, then
\begin{equation}\label{5.1}
\sum_{g\in G}\chi(g)=0.
\end{equation}
Further, if $g\in G$ with $g\neq 1_G$, then
\begin{equation}\label{5.2}
\sum_{\chi\in\widehat G}\chi(g)=0.
\end{equation}
\end{theorem}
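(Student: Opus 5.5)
The plan is the standard multiplication trick for both parts, treating the second as the first applied to the dual group.

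For \eqref{5.1}, since $\chi$ is nontrivial, I will fix some $h\in G$ with $\chi(h)\neq 1$. Because $g\mapsto hg$ is a bijection of $G$, reindexing gives
$$
\chi(h)\sum_{g\in G}\chi(g)=\sum_{g\in G}\chi(hg)=\sum_{g\in G}\chi(g).
$$
Hence $(\chi(h)-1)\sum_{g}\chi(g)=0$, and since $\chi(h)\neq 1$ the sum vanishes.

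For \eqref{5.2}, I will first note that $\widehat G$ is itself a finite abelian group under pointwise multiplication $(\chi\psi)(g)=\chi(g)\psi(g)$, with identity $\chi_0$. Finiteness holds because each $\chi(g)$ is a root of unity of order dividing $|G|$ and $G$ is finitely generated. Next, for $g\neq 1_G$ I will produce a character $\psi\in\widehat G$ with $\psi(g)\neq 1$. Given such a $\psi$, the map $\chi\mapsto\psi\chi$ is a bijection of $\widehat G$, so
$$
\psi(g)\sum_{\chi}\chi(g)=\sum_\chi(\psi\chi)(g)=\sum_\chi\chi(g),
$$
and the sum vanishes exactly as in \eqref{5.1}.

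The only real obstacle is the existence of $\psi$, i.e.\ that characters separate points. I would use the structure theorem to write $G\cong \mathbb{Z}/n_1\times\cdots\times\mathbb{Z}/n_k$ and $g=(a_1,\dots,a_k)$. Since $g\neq 1_G$, some coordinate satisfies $a_j\not\equiv 0 \pmod{n_j}$. Then I set
$$
\psi(x_1,\dots,x_k)=e^{2\pi i x_j/n_j}.
$$
This is a well-defined homomorphism into the unit circle, and $\psi(g)=e^{2\pi i a_j/n_j}\neq 1$. This completes the argument. Alternatively, one could cite the standard fact $|\widehat G|=|G|$ with separation, as in \cite{LM13}.
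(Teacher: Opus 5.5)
Your proof is correct and is the standard argument. The paper gives no proof of its own and quotes the result from Lidl--Niederreiter, whose proof uses the same translation trick, first in $G$ and then in $\widehat G$. The only difference is how you obtain a character $\psi$ with $\psi(g)\neq 1$: you read it off from the structure theorem $G\cong\mathbb{Z}/n_1\times\cdots\times\mathbb{Z}/n_k$, whereas Lidl--Niederreiter take a nontrivial character of the cyclic subgroup $\langle g\rangle$ and extend it to all of $G$ using their extension theorem for characters.
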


\begin{theorem}[\cite{LM13}, Theorem~5.5]\label{5.5}
Let $G$ be a finite abelian group. Then the number of characters of $G$ equals $|G|$, that is,
$
|\widehat G|=|G|.
$
\end{theorem}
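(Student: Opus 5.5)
The statement to prove is Theorem~\ref{5.5}: $|\widehat G|=|G|$ for a finite abelian group $G$. The plan is to use the structure theorem for finite abelian groups together with the observation that characters of a direct product factor as products of characters of the factors.

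\textbf{Cyclic case.} Suppose first that $G=\langle g\rangle$ is cyclic of order $n$. A character $\chi$ is determined by $\chi(g)$. Since $g^n=1_G$, we have $\chi(g)^n=\chi(1_G)=1$, so $\chi(g)$ is an $n$-th root of unity. Conversely, for each $n$-th root of unity $\zeta$, the rule $g^k\mapsto\zeta^k$ is well defined, because $g^k=g^{k'}$ forces $n\mid k-k'$ and hence $\zeta^k=\zeta^{k'}$. This rule is clearly a homomorphism. There are exactly $n$ distinct $n$-th roots of unity in $\mathbb{C}$, so $|\widehat G|=n=|G|$.

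\textbf{Direct products.} Next I will show that if $G=H_1\times H_2$, then the map
$$
\widehat{H_1}\times\widehat{H_2}\to\widehat G,\qquad (\chi_1,\chi_2)\mapsto\bigl((h_1,h_2)\mapsto\chi_1(h_1)\chi_2(h_2)\bigr),
\]
is a bijection.
\begin{enumerate}
\item The image is a character, since the product of two homomorphisms into the abelian group $\mathbb{C}^\times$ is again a homomorphism.
\item The inverse sends $\chi\in\widehat G$ to its restrictions $h_1\mapsto\chi(h_1,1)$ and $h_2\mapsto\chi(1,h_2)$.
\item The two maps are mutually inverse because $(h_1,h_2)=(h_1,1)(1,h_2)$.
\end{enumerate}
Hence $|\widehat G|=|\widehat{H_1}|\,|\widehat{H_2}|$. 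By induction on the number of factors, this extends to any finite direct product.

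\textbf{Conclusion.} By the fundamental theorem of finite abelian groups, $G\cong C_{n_1}\times\cdots\times C_{n_k}$ with cyclic factors $C_{n_i}$. An isomorphism of groups induces a bijection of their character sets by composition. Combining this with the cyclic case gives
$$
|\widehat G|=\prod_{i=1}^k n_i=|G|.
$$

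The only nontrivial input is the structure theorem, which I would cite rather than reprove. If one wished to avoid it, the alternative is induction on $|G|$: pick an element $g\notin H$ for a subgroup $H$, and show that each character of $H$ has exactly $m$ extensions to $\langle H,g\rangle$, where $m$ is the least positive integer with $g^m\in H$. Checking that count is the main point of care in that route.
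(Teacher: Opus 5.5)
Your proof is correct, but it takes a different route from the source. The paper gives no proof of this statement; it quotes \cite[Theorem~5.5]{LM13}. There the count comes out of the character-extension argument, which is essentially the alternative you sketch in your last paragraph.

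That argument runs as follows. Take $H\le G$, $g\notin H$, and $m\ge 1$ minimal with $g^m\in H$. Then every element of $\langle H,g\rangle$ is uniquely of the form $g^jh$ with $0\le j<m$, $h\in H$, so $|\langle H,g\rangle|=m|H|$. A character $\psi$ of $H$ has exactly $m$ extensions, namely $g^jh\mapsto\omega^j\psi(h)$ for the $m$ complex solutions $\omega$ of $\omega^m=\psi(g^m)$. Starting from $H=\{1_G\}$ and adjoining generators one at a time gives $|\widehat G|=|G|$.

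Your version hands all the group theory to the structure theorem. What remains is the cyclic count and the identification of $\widehat{H_1\times H_2}$ with $\widehat{H_1}\times\widehat{H_2}$, and you verify both correctly.

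What your route buys, at the cost of citing the structure theorem, is brevity and a stronger conclusion. Your bijection respects pointwise multiplication of characters, so you actually get $\widehat G\cong G$ as groups (non-canonically), not just equality of orders.

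What the Lidl--Niederreiter route buys is that it is self-contained and proves along the way that every character of a subgroup extends to $G$. That is exactly what supplies, for $g\neq 1_G$, a character with $\chi(g)\neq 1$. This is the key input to \eqref{5.2}, which the paper combines with Theorem~\ref{5.5} to get the second orthogonality relation. In your framework this separation property is also easy: take a faithful character on a cyclic factor where $g$ has a nontrivial component, and trivial characters on the other factors. But it would have to be argued separately.
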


\noindent
Theorems~\ref{5.4} and~\ref{5.5} together yield the orthogonality relations for characters of $G$. For characters $\chi,\phi\in\widehat{G}$, one has
$$
\frac{1}{|G|}\sum_{g\in G}\chi(g)\overline{\phi(g)}
=
\begin{cases}
0, & \chi\neq\phi,\\
1, & \chi=\phi.
\end{cases}
$$
Indeed, the first case follows from \eqref{5.1} applied to the character $\chi\overline{\phi}$, while the second is immediate. Similarly, for $g,h\in G$,
$$
\frac{1}{|G|}\sum_{\chi\in\widehat G}\chi(g)\overline{\chi(h)}
=
\begin{cases}
0, & g\neq h,\\
1, & g=h.
\end{cases}
$$
\medskip
\noindent
Indeed, $\chi(g)\overline{\chi(h)}=\chi(gh^{-1})$, so the result follows from \eqref{5.2} and Theorem~\ref{5.5}.

\medskip
\noindent
These orthogonality relations will be used to express point counts in terms of character sums.

\medskip
\noindent
We now recall additive characters of finite fields. The absolute trace is used to define the canonical additive character used later in the point-counting formulas.

\begin{defi}
Let $F=\FF_{q^m}$ and let $K=\FF_q$. For $a\in F$, the \emph{trace} of $a$ from $F$ to $K$ is defined by
$$
\Tr_{F/K}(a)=a+a^q+a^{q^2}+\cdots+a^{q^{m-1}}.
$$
In the special case where $K=\FF_p$ is the prime subfield of $F$, then $\Tr_{F/K}(a)$ is called the \emph{absolute trace} of $a$ and is denoted by $\Tr_F(a)$.
\end{defi}

\begin{defi}
Let $\Tr:\FF_q\to\FF_p$ denote the absolute trace.
Define
\begin{equation}\label{eq:canonical-additive}
\chi_1(a)=e^{2\pi i\,\Tr(a)/p},
\qquad a\in\FF_q.
\end{equation}
The character $\chi_1$ is called the \emph{canonical additive character} of $\FF_q$. 
\end{defi}

\begin{theorem}[\cite{LM13}, Theorem~5.7]
Let $\chi_1$ denote the canonical additive character of $\FF_q$. For each $b\in\FF_q$, define
$$
\chi_b(a)=\chi_1(ba)\qquad(a\in\FF_q).
$$
Then $\chi_b$ is an additive character of $\FF_q$, and every additive character of $\FF_q$ is obtained in this way.
\end{theorem}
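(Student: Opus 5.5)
Both assertions come from the trace map $\Tr:\FF_q\to\FF_p$. The plan is to show first that each $\chi_b$ is an additive character. Then we show that the assignment $b\mapsto\chi_b$ is injective from $\FF_q$ into the additive character group $\widehat{\FF_q}$. Theorem~\ref{5.5} gives $|\widehat{\FF_q}|=q$, so injectivity forces surjectivity, and every additive character arises as some $\chi_b$.

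For the first step, we use that $\Tr$ is $\FF_p$-linear. Since the Frobenius $a\mapsto a^p$ is a ring homomorphism, we have $(a+c)^{p^j}=a^{p^j}+c^{p^j}$, hence $\Tr(a+c)=\Tr(a)+\Tr(c)$. It also follows that $\Tr(a)^p=\Tr(a)$, because $a^{p^r}=a$ for $a\in\FF_q$. So $\Tr(a)$ really lies in $\FF_p$, and $e^{2\pi i\Tr(a)/p}$ is well defined once $\FF_p$ is identified with $\mathbb{Z}/p\mathbb{Z}$. Consequently $\chi_1(a+c)=\chi_1(a)\chi_1(c)$ and $|\chi_1(a)|=1$. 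Since $a\mapsto ba$ is additive, $\chi_b=\chi_1\circ(\text{mult. by }b)$ is again a homomorphism $(\FF_q,+)\to\mathbb{C}^\times$.

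For the second step, note that $\chi_b\chi_c=\chi_{b+c}$. It therefore suffices to show that $\chi_b$ is trivial only when $b=0$. Suppose $b\ne0$. As $a$ ranges over $\FF_q$, so does $ba$, so it is enough to show that $\chi_1$ itself is nontrivial, i.e.\ that $\Tr$ is not identically zero. The map $\Tr(a)=a+a^p+\dots+a^{p^{r-1}}$ is a polynomial of degree $p^{r-1}<q$. A nonzero polynomial of degree less than $q$ cannot vanish at all $q$ elements of $\FF_q$, so $\Tr(a_0)\ne0$ for some $a_0$. Then $\chi_1(a_0)=e^{2\pi i k/p}\ne1$ with $k\not\equiv0\pmod p$. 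Hence $b\mapsto\chi_b$ is an injective homomorphism $\FF_q\to\widehat{\FF_q}$, and counting via Theorem~\ref{5.5} completes the argument.

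The main point needing care is the nontriviality of the trace, i.e.\ the degree argument above. The remaining steps are formal consequences of additivity of Frobenius and the count $|\widehat G|=|G|$.
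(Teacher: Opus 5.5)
Your proof is correct. The paper gives no proof of its own and simply cites Lidl--Niederreiter, and your argument is essentially the standard one there: $\chi_1$ is nontrivial, so $b\mapsto\chi_b$ is an injective homomorphism $\FF_q\to\widehat{\FF_q}$, and $|\widehat{\FF_q}|=q$ from Theorem~\ref{5.5} forces surjectivity. Your degree count showing $\Tr\not\equiv 0$ is exactly the argument behind the surjectivity of the trace that the textbook uses at that step.
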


\begin{theorem}\label{5.6}
Let $\chi$ be a nontrivial additive character of $\FF_q$, and let $\chi_0$ denote the trivial additive character. For a polynomial $f(X)\in\FF_q[X]$ and an element $a\in\FF_q$, let $N$ denote the number of solutions to the equation $f(X)=a$ in $\FF_q$. Then
$$
N=
1+\frac{1}{q}\sum_{\chi\neq\chi_0}\overline{\chi(a)}\,S(\chi),
\qquad \text{where} \qquad
S(\chi):=\sum_{\al\in\FF_q}\chi\big(f(\al)\big).
$$
\end{theorem}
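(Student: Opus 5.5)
The plan is to write $N$ as a sum of indicator functions and expand each indicator with the orthogonality relation for the additive group $(\FF_q,+)$. By definition,
$$
N=\sum_{\al\in\FF_q}\mathbf{1}\bigl[f(\al)=a\bigr].
$$
For any $c\in\FF_q$, the second orthogonality relation recalled after Theorem~\ref{5.5} applies to $G=(\FF_q,+)$. Written additively, with $g=c$ and $h=0$, it gives
$$
\mathbf{1}[c=0]=\frac{1}{q}\sum_{\chi\in\widehat{\FF_q}}\chi(c).
$$
Concretely, \eqref{5.2} makes the sum vanish for $c\neq 0$. For $c=0$ every character equals $1$, and Theorem~\ref{5.5} shows there are $q$ characters, so the sum is $q$.

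Next, I apply this with $c=f(\al)-a$. Since each $\chi$ is a homomorphism into the unit circle, $\chi(f(\al)-a)=\chi(f(\al))\chi(-a)=\chi(f(\al))\overline{\chi(a)}$. This gives
$$
N=\frac{1}{q}\sum_{\al\in\FF_q}\sum_{\chi}\overline{\chi(a)}\,\chi\bigl(f(\al)\bigr)
=\frac{1}{q}\sum_{\chi}\overline{\chi(a)}\,S(\chi),
$$
where interchanging the two finite sums is harmless.

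Finally, I separate off the trivial character $\chi_0$. For $\chi_0$ we have $\overline{\chi_0(a)}=1$ and $S(\chi_0)=\sum_{\al}1=q$, so this term contributes exactly $1$. The remaining terms give $\frac1q\sum_{\chi\neq\chi_0}\overline{\chi(a)}S(\chi)$, which is the stated formula. By the preceding theorem, the characters may equivalently be indexed as $\chi_b$ with $b\in\FF_q^\times$, though this is not needed.

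There is no real obstacle. The only point needing care is translating the orthogonality relation, stated multiplicatively, to the additive group: the identity $1_G$ becomes $0$ and inverses become negatives. One must also use $|\chi|=1$ to justify $\chi(-a)=\overline{\chi(a)}$.
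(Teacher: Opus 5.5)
Your proposal is correct and follows essentially the same route as the paper: you write the indicator of $f(\alpha)=a$ via additive-character orthogonality applied to $f(\alpha)-a$, sum over $\alpha$, and split off the trivial character, which contributes exactly $1$. The only difference is cosmetic: you interchange the sums before separating $\chi_0$, whereas the paper separates $\chi_0$ first. Your justification of $\chi(-a)=\overline{\chi(a)}$ is a small detail that the paper uses without comment.
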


\begin{proof}
By the orthogonality relations for additive characters,
$$
\sum_{\chi\in\widehat{\FF_q}}\chi(b)=
\begin{cases}
q, & \text{if } b=0,\\
0, & \text{if } b\neq 0,
\end{cases}
$$
for all $b\in\FF_q$, where $\widehat{\FF_q}$ denotes the group of additive characters of $\FF_q$. Fix $\al\in\FF_q$. Applying the above orthogonality relation to $b=f(\al)-a$, we obtain
$$
\sum_{\chi\in\widehat{\FF_q}}\chi\bigl(f(\al)-a\bigr)=
\begin{cases}
q, & \text{if } f(\al)=a,\\
0, & \text{if } f(\al)\neq a.
\end{cases}
$$
Thus, varying $\al$ over $\Fq$ gives
\begin{align*}
N
&=\frac{1}{q}\sum_{\al\in\FF_q}\sum_{\chi\in\widehat{\FF_q}}
\chi\big(f(\al)-a\big) \\
&=\frac{1}{q}\sum_{\al\in\FF_q}
\Bigl[1+\sum_{\chi\neq\chi_0}\chi\big(f(\al)-a\big)\Bigr] \\
&=1+\frac{1}{q}\sum_{\chi\neq\chi_0}
\sum_{\al\in\FF_q}\chi\big(f(\al)\big)\overline{\chi(a)} \\
&=1+\frac{1}{q}\sum_{\chi\neq\chi_0}\overline{\chi(a)}\,S(\chi),
\end{align*}
which completes the proof.
\end{proof}

\noindent
We next recall the results on multiplicative characters and left circulant matrices used later.

\begin{defi}
A \emph{multiplicative character} on $\mathbb{F}_q$ is a map
$$
\psi:\mathbb{F}_q^\times \to \mathbb{C}^\times
$$
that satisfies
$$
\psi(ab)=\psi(a)\psi(b)
\qquad \text{for all } a,b\in \mathbb{F}_q^\times.
$$
\end{defi}

\begin{remark}
We extend a multiplicative character $\psi$ to $\FF_q$ by the convention
$$
\psi(0)=
\begin{cases}
1, & \text{if } \psi=\psi_0,\\
0, & \text{if } \psi\neq\psi_0,
\end{cases}
$$
where $\psi_0$ denotes the trivial multiplicative character.
\end{remark}

\begin{defi}
Let $q$ be odd. The \emph{quadratic character} $\eta$ of $\FF_q$ is defined by
$$
\eta(a)=
\begin{cases}
1, & \text{if } a\in(\FF_q^\times)^2,\\
-1, & \text{if } a\in\FF_q^\times\setminus(\FF_q^\times)^2,\\
0, & \text{if } a=0.
\end{cases}
$$
\end{defi}

\begin{defi}
Let $\psi$ be a multiplicative character  of $\FF_q$ and let $\chi$ be an additive character of $\FF_q$.
The \emph{Gauss sum} associated with $\psi$ and $\chi$ is defined by
$$
G(\psi,\chi)
=
\sum_{a\in\FF_q}\psi(a)\chi(a).
$$
\end{defi}

\begin{defi}[\cite{LM13}, Equation $6.1$]\label{leftcirculant} 
Let $f(X)=a_0+a_1X+a_2X^2+\cdots+a_{q-2}X^{q-2}\in\Fq[X].$ The \emph{left circulant matrix} associated with $f$ is the $(q-1)\times(q-1)$ matrix
$$
M=(a_{i+j})_{0\le i,j\le q-2},
$$
where the indices are taken modulo $q-1$.
Equivalently,
$$
M=
\begin{bmatrix}
a_0 & a_1 & a_2 & \cdots & a_{q-3} & a_{q-2} \\
a_1 & a_2 & a_3 & \cdots & a_{q-2} & a_0 \\
a_2 & a_3 & a_4 & \cdots & a_0 & a_1 \\
\vdots & \vdots & \vdots & \ddots & \vdots & \vdots \\
a_{q-2} & a_0 & a_1 & \cdots & a_{q-4} & a_{q-3}
\end{bmatrix}.
$$
\end{defi}
\medskip


\section{Number of \texorpdfstring{$\Fq$}{Fq}-Points on the Danielewski Surface}\label{S3}

\noindent
In this section, we determine the number of $\Fq$-rational points on the Danielewski surface
$$
S_d:\ X^dY=P(Z),
$$
where $d\geq1$ and $P(Z)\in\Fq[Z]$ with $\deg P\geq2$. Thus,
$$
S_d(\Fq)
=
\{(\al,\be,\g)\in\Fq^3\mid \al^d\be=P(\g)\}.
$$

\medskip
\noindent
We first decompose the set of $\Fq$-rational points of $S_d$ according to whether the $X$-coordinate vanishes or not. We fix the notation
$$
S_1 := \{(\al,\be,\g)\in S_d(\Fq)\mid \al=0\},
\qquad
S_2 := \{(\al,\be,\g)\in S_d(\Fq)\mid \al\neq0\}.
$$
Then
$$
S_d(\Fq)=S_1\sqcup S_2.
$$

\medskip
\noindent
If $\al=0$, the equation $\al^{d}\be=P(\g)$ reduces to $P(\g)=0$, and hence
$$
S_1=\{(0,\be,\g)\in\Fq^3 \mid P(\g)=0\}.
$$
If $\al\neq0$, then $\al\in\Fq^\times$, and the equation uniquely determines $\be=\al^{-d}P(\g)$, so
$$
S_2=\{(\al,\be,\g)\in\Fq^3 \mid \al\in\Fq^\times,\ \be=\al^{-d}P(\g)\}.
$$

\medskip
\noindent
We first count the points in $S_2$.

\begin{prop}\label{p1}
The number of $\Fq$-rational points of the Danielewski surface $S_d$ with $X\neq0$ is $|S_2| = q^2-q.$
\end{prop}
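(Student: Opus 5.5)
The count follows from a bijection between $S_2$ and the parameter set $\Fq^\times\times\Fq$. Define
$$
\Phi:\Fq^\times\times\Fq\longrightarrow S_2,\qquad (\al,\g)\longmapsto \bigl(\al,\ \al^{-d}P(\g),\ \g\bigr).
$$
I first check that $\Phi$ is well defined. Since $\al\in\Fq^\times$, the element $\al^{d}$ is a nonzero element of the field $\Fq$. Hence $\al^{-d}$ exists and lies in $\Fq$, so $\be:=\al^{-d}P(\g)\in\Fq$. This triple satisfies $\al^d\be=P(\g)$ and has $\al\neq0$, so it lies in $S_2$.

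Next I show that $\Phi$ is a bijection.

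\emph{Injectivity.} The first and third coordinates of $\Phi(\al,\g)$ recover $(\al,\g)$.

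\emph{Surjectivity.} Let $(\al,\be,\g)\in S_2$. Multiplying $\al^d\be=P(\g)$ by $\al^{-d}$ gives $\be=\al^{-d}P(\g)$. Therefore $(\al,\be,\g)=\Phi(\al,\g)$. This is exactly the description of $S_2$ already recorded before the proposition.

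It follows that
$$
|S_2|=|\Fq^\times|\cdot|\Fq|=(q-1)q=q^2-q.
$$

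There is no real obstacle here. The only point needing care is that $\be$ is uniquely determined, which uses that $\al^d$ is invertible in $\Fq$. This holds because $\Fq$ is a field, and it holds for every $d\ge1$. Note that no hypothesis on $P$ is used: $\deg P\ge2$ and the roots of $P$ play no role in this part of the count. They matter only for $S_1$, through $N_0$.
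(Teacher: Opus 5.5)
Your proof is correct and follows essentially the same route as the paper, which uses the same bijection $\Fq^\times\times\Fq\to S_2$, $(\al,\g)\mapsto(\al,\al^{-d}P(\g),\g)$, to obtain $|S_2|=(q-1)q$. You additionally spell out the well-definedness, injectivity and surjectivity checks that the paper leaves implicit.
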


\begin{proof}
For $\al\in\Fq^\times$ and $\g\in\Fq$, the equation $\al^{d}\be=P(\g)$ uniquely determines $\be=\al^{-d}P(\g)\in\Fq$. Thus the map
$$
\Fq^\times\times\Fq \to S_2,
\qquad
(\al,\g)\to(\al,\al^{-d}P(\g),\g),
$$
is bijective. Since $|\Fq^\times|=q-1$ and $|\Fq|=q$, we obtain $|S_2|=q^2-q$.
\end{proof}

\medskip
\noindent
We now count the points in $S_1$. For this, we define the quantity
$$
N_0:=\lvert\{\g\in\Fq\mid P(\g)=0\}\rvert,
$$
which denotes the number of roots of $P(Z)$ in $\Fq$.
\begin{prop}\label{p2}
The number of $\Fq$-rational points of $S_d$ with $X=0$, that is, the number of points in $S_1$, is $|S_1| = q N_0.$
\end{prop}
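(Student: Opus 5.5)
The plan is to mirror the proof of Proposition~\ref{p1} and exhibit an explicit bijection between $S_1$ and a product set of known size. From the description of $S_1$ obtained above, setting $\al=0$ in $\al^d\be=P(\g)$ gives $0=P(\g)$, because $d\geq1$ forces $0^d\be=0$. This condition does not involve $\be$ at all. Hence
$$
S_1=\{(0,\be,\g)\in\Fq^3\mid \be\in\Fq,\ P(\g)=0\}.
$$

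Next, let $R:=\{\g\in\Fq\mid P(\g)=0\}$, so that $|R|=N_0$ by definition. Consider the map
$$
\Fq\times R\to S_1,\qquad (\be,\g)\mapsto(0,\be,\g).
$$
It is well defined by the description of $S_1$ above. It is injective, since it simply inserts a zero coordinate. It is surjective, since every point of $S_1$ has first coordinate $0$ and its third coordinate is a root of $P$. Therefore
$$
|S_1|=|\Fq|\cdot|R|=qN_0.
$$

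There is no genuine obstacle in this argument. The only point requiring care is the use of $d\geq1$, which guarantees that $\al^d=0$ when $\al=0$ and so makes $\be$ completely free. Determining $N_0$ itself is a separate problem, which the paper addresses later by gcd and character methods. Combining this count with Proposition~\ref{p1} then gives $|S_d(\Fq)|=q^2-q+qN_0$.
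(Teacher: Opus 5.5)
Your proof is correct and takes essentially the same approach as the paper: setting $\al=0$ forces $P(\g)=0$ with $\be$ free, so each of the $N_0$ roots contributes exactly $q$ points. Your version just phrases this as an explicit bijection $\Fq\times R\to S_1$, and you also make explicit that $d\geq1$ is needed for $0^d\be=0$, a point the paper uses without comment.
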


\begin{proof}
If $\al=0$, the equation $\al^{d}\be=P(\g)$ forces $P(\g)=0$. For each $\g\in\Fq$ such that $P(\g)=0$, the coordinate $\be$ can be chosen arbitrarily in $\Fq$. Hence, for each root $\g$ of $P$ in $\Fq$, there are exactly $q$ corresponding points in $S_1$. Therefore, $|S_1|=qN_0$.
\end{proof}

\noindent
We now combine the preceding two counts.
\begin{remark}\label{dan}
Combining Propositions~\ref{p1} and~\ref{p2}, we obtain
$$
|S_d(\Fq)| = |S_1| + |S_2|
= qN_0 + q(q-1)
= q(q-1+N_0).
$$
\end{remark}

\begin{corollary}
If $P(Z)$ has no roots in $\Fq$, that is, $N_0=0$, then $|S_d(\Fq)| = q^2-q.$
\end{corollary}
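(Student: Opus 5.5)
This is an immediate specialization of Remark~\ref{dan}, so I would argue by direct substitution rather than by any new counting. By Remark~\ref{dan}, which combines Proposition~\ref{p1} and Proposition~\ref{p2}, we already have $|S_d(\Fq)| = q(q-1+N_0)$ for every $d\geq 1$ and every $P(Z)\in\Fq[Z]$ with $\deg P\geq 2$.

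The only input from the hypothesis is that $N_0 = 0$. Substituting this into the formula gives
$$
|S_d(\Fq)| = q(q-1) = q^2-q.
$$

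To make the structure transparent, I would also record what happens in each piece of the decomposition $S_d(\Fq)=S_1\sqcup S_2$. Since $P$ has no roots in $\Fq$, the set $S_1=\{(0,\be,\g)\mid P(\g)=0\}$ is empty, so $|S_1|=qN_0=0$ by Proposition~\ref{p2}. Consequently every $\Fq$-point of $S_d$ lies in $S_2$, and Proposition~\ref{p1} gives $|S_2|=q^2-q$.

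There is no genuine obstacle here. The one point to check is that Proposition~\ref{p1} places no condition on $P$: the bijection $(\al,\g)\mapsto(\al,\al^{-d}P(\g),\g)$ from $\Fq^\times\times\Fq$ onto $S_2$ holds for arbitrary $P$. So the count $q^2-q$ for $S_2$ does not depend on $N_0$, and the conclusion follows directly.
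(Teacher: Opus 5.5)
Your proposal is correct and follows the paper's route: the corollary is stated immediately after Remark~\ref{dan} and follows by substituting $N_0=0$ into $|S_d(\Fq)|=q(q-1+N_0)$. Your added remark that $S_1=\emptyset$ and that the count in Proposition~\ref{p1} does not depend on $P$ is a sound consistency check, not a different argument.
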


\medskip
\noindent
Further, Remark~\ref{dan} shows that it suffices to compute $N_0$ to determine the number of $\Fq$-rational points on $S_d$. We first give a gcd expression for $N_0$.

\begin{prop}\label{gcdargument}
The quantity $N_0$ satisfies $$N_0=\deg\bigl(\gcd(P(Z),\,Z^q-Z)\bigr).$$
\end{prop}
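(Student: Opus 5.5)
The plan is to use the factorization $Z^q-Z=\prod_{a\in\Fq}(Z-a)$ recalled in Section~\ref{S2}. First we note that $P(Z)$ is a nonzero polynomial, since $\deg P\geq 2$, so the gcd is well defined. We normalize it to be monic.

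The first step is to identify the monic divisors of $Z^q-Z$. This polynomial is a product of $q$ distinct monic linear factors $Z-a$, one for each $a\in\Fq$, so it is squarefree. Since $\Fq[Z]$ is a unique factorization domain, every monic divisor of $Z^q-Z$ has the form $\prod_{a\in A}(Z-a)$ for a unique subset $A\subseteq\Fq$. Its degree is then $|A|$. In particular, $D(Z):=\gcd(P(Z),Z^q-Z)=\prod_{a\in A}(Z-a)$ for some subset $A$.

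The second step is to identify $A$ with the set of roots of $P$ in $\Fq$.
\begin{enumerate}
\item If $a\in A$, then $(Z-a)\mid D\mid P$, so $P(a)=0$.
\item Conversely, if $a\in\Fq$ and $P(a)=0$, then $(Z-a)$ divides $P$ by the factor theorem. It also divides $Z^q-Z$ because $a^q=a$. Hence $(Z-a)$ divides their gcd $D$.
\item Unique factorization of $D$ into the linear factors $Z-b$, $b\in A$, then forces $a\in A$.
\end{enumerate}
Thus $A=\{\g\in\Fq\mid P(\g)=0\}$, and so $\deg D=|A|=N_0$.

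The only point requiring care is the multiplicity issue. $P$ may have repeated roots, but the gcd cannot pick up a factor $(Z-a)^2$ because $Z^q-Z$ is squarefree. If one wants an argument independent of the displayed factorization, squarefreeness also follows from the derivative: $(Z^q-Z)'=qZ^{q-1}-1=-1$ in characteristic $p$, so $Z^q-Z$ is coprime to its derivative. This observation is what makes the degree count roots without multiplicity, exactly as $N_0$ does. No other obstacle is expected.
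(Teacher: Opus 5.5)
Your proof is correct and follows the same route as the paper. Both arguments use the factorization $Z^q-Z=\prod_{a\in\Fq}(Z-a)$ into distinct linear factors to conclude that $\gcd(P(Z),Z^q-Z)=\prod_{\g\in\Fq,\,P(\g)=0}(Z-\g)$, whose degree is $N_0$; you simply spell out the subset correspondence for monic divisors and the squarefreeness point, including the derivative check, which the paper leaves implicit.
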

\begin{proof}
Since
$$
Z^q-Z=\prod_{\g\in\Fq}(Z-\g),
$$
each element of $\Fq$ occurs as a root with multiplicity one. Hence,
$\gcd(P(Z),Z^q-Z)$ is the product of the factors $Z-\g$ for which $P(\g)=0$. Its degree is therefore $N_0$.
\end{proof}

\medskip
\noindent
An alternative expression for $N_0$ can be obtained using additive characters.
\begin{prop}\label{charactersum}
Let $\chi_0$ denote the trivial additive character of $\Fq$. Then 
$$
N_0 =
1 + \frac{1}{q}
\sum_{\chi \neq \chi_0} S_P(\chi),
\qquad  \text{where}
\qquad
S_P(\chi) := \sum_{\g \in \Fq} \chi(P(\g)).
$$
\end{prop}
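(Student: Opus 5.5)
This is a direct specialization of Theorem~\ref{5.6}. I would apply it with $f(X)=P(Z)$, viewed as a polynomial in one variable over $\Fq$, and with the target value $a=0$. By definition, $N_0$ is exactly the number of solutions of $P(\g)=0$ with $\g\in\Fq$. That is precisely the quantity $N$ counted in Theorem~\ref{5.6}. Hence
$$
N_0 = 1+\frac{1}{q}\sum_{\chi\neq\chi_0}\overline{\chi(0)}\,S(\chi),
\qquad
S(\chi)=\sum_{\g\in\Fq}\chi\bigl(P(\g)\bigr).
$$

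The only remaining step is to simplify the factor $\overline{\chi(0)}$. Every additive character $\chi$ is a homomorphism from the additive group $(\Fq,+)$ into $\CC^\times$. It must therefore send the identity $0$ to $1$, so $\chi(0)=1$ and $\overline{\chi(0)}=1$ for all $\chi$. Substituting this, and noting that $S(\chi)$ coincides with $S_P(\chi)$ as defined in the statement, gives the claimed formula.

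If one prefers not to cite Theorem~\ref{5.6}, the same argument can be run from scratch. Start from the orthogonality relation $\sum_{\chi}\chi(b)=q$ if $b=0$ and $0$ otherwise. Sum it over $b=P(\g)$ as $\g$ ranges over $\Fq$, and divide by $q$. Then separate the trivial character, which contributes $\frac1q\cdot q=1$.

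There is no real obstacle here. The only point needing care is making sure the sum ranges over all nontrivial additive characters, which by Theorem~5.7 are the $\chi_b$ with $b\in\Fq^\times$.
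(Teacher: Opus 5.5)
Your proposal is correct and follows the paper's proof exactly: specialize Theorem~\ref{5.6} to $f=P$ and $a=0$, then use $\overline{\chi(0)}=1$. Your justification that $\chi(0)=1$ because a character is a homomorphism, and your optional direct derivation from the orthogonality relation, are both sound.
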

\begin{proof} 
Applying Theorem~\ref{5.6} with $a=0$ and $f=P$, we obtain
$$
N_0
=
1+\frac{1}{q}\sum_{\chi\ne\chi_0}\overline{\chi(0)}\,S_P(\chi),
\qquad
S_P(\chi):=\sum_{\g\in\Fq}\chi(P(\g)).
$$
Since $\overline{\chi(0)}=1$ for every additive character $\chi$, we obtain
$$
N_0
=
1+\frac{1}{q}\sum_{\chi\ne\chi_0} S_P(\chi).
$$
\end{proof}

\noindent
We next obtain a formula for $N_0$ using the K\"onig--Rados theorem. We first recall the theorem.

\begin{theorem}[\cite{LM13}, K\"onig--Rados, Theorem $6.1$]\label{kongrados}
Let $f(X) = a_0 + a_1X + a_2X^2 + \cdots + a_{q-2}X^{q-2} \in \Fq[X].$ Then the number of nonzero solutions of the equation $f(X)=0$ in $\Fq$ is $q-1-\rho$, where $\rho$ is the rank of the left circulant matrix corresponding to $f(X)$.
\end{theorem}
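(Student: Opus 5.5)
The plan is the standard proof of the König--Rados theorem. It goes through the Vandermonde-type matrix built from the nonzero field elements.

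Fix a generator $\omega$ of the cyclic group $\Fq^\times$, so $\Fq^\times=\{\omega^0,\omega^1,\dots,\omega^{q-2}\}$. Let $V=(\omega^{ij})_{0\le i,j\le q-2}$. This is a Vandermonde matrix on the distinct nodes $\omega^0,\dots,\omega^{q-2}$, so it is invertible. Since $\omega^{q-1}=1$, exponents may be read modulo $q-1$, which matches the index convention in Definition~\ref{leftcirculant}.

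The key computation is the product $MV$, where $M=(a_{i+j})$ is the left circulant matrix of $f$. Its $(i,k)$ entry is
\[
\sum_{j=0}^{q-2} a_{i+j}\,\omega^{jk}
=\omega^{-ik}\sum_{j=0}^{q-2} a_{i+j}\,\omega^{(i+j)k}
=\omega^{-ik}f(\omega^k).
\]
The last equality holds because $i+j$ runs over all residues modulo $q-1$ as $j$ does, and $\omega^{(i+j)k}$ depends only on $i+j$ modulo $q-1$. Hence
\[
MV = W\,D, \qquad D=\operatorname{diag}\bigl(f(\omega^0),\dots,f(\omega^{q-2})\bigr),
\]
where $W=(\omega^{-ik})_{i,k}$. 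The matrix $W$ is again a Vandermonde matrix on the distinct nodes $\omega^{0},\omega^{-1},\dots,\omega^{-(q-2)}$, so it is invertible.

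Since $V$ and $W$ are invertible, $\rho=\operatorname{rank}M=\operatorname{rank}D$. The rank of $D$ is the number of $k$ with $f(\omega^k)\ne0$. This equals $(q-1)$ minus the number of nonzero roots of $f$ in $\Fq$, which gives the claim.

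The only delicate point is the reindexing step in the computation of $MV$. There one must check that cyclic indexing of the coefficients, modulo $q-1$, is compatible with $\omega$ having order exactly $q-1$. This is also why the theorem requires $\deg f\le q-2$: on $\Fq^\times$, any polynomial can be reduced modulo $X^{q-1}-1$ to this form without changing its nonzero roots. Everything else is linear algebra over $\Fq$.
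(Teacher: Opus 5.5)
Your proof is correct: the factorization $MV=WD$, with invertible Vandermonde-type matrices $V,W$ and $D=\operatorname{diag}\bigl(f(\omega^k)\bigr)$, gives $\rho=\operatorname{rank}D=(q-1)-\#\{\text{nonzero roots of } f\}$, and you handle the cyclic reindexing step correctly. The paper does not prove this statement itself but quotes it from Lidl--Niederreiter (Theorem~6.1), and your argument is essentially the standard proof found there.
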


\begin{theorem}\label{thm:KR-general-N0_1}
Let $q>2$, and let $P(Z)\in\Fq[Z]$. Let $R(Z)\in\Fq[Z]$ be the unique polynomial satisfying
$$
R(Z)\equiv P(Z)\pmod{Z^q-Z},
\qquad
\deg R\le q-1.
$$
Write
$$
R(Z)=a_0+a_1Z+\cdots+a_{q-2}Z^{q-2}+a_{q-1}Z^{q-1}.
$$
Define
$$
S(Z)
=
(a_0+a_{q-1})+a_1Z+\cdots+a_{q-2}Z^{q-2}.
$$
\noindent
Let $\rho$ denote the rank of the left circulant matrix associated with $S(Z)$. Then
$$
N_0
=
\#\{\g\in\Fq\mid P(\g)=0\}
=
q-1-\rho+\delta_{a_0},
$$
where
$$
\delta_{a_0}
=
\begin{cases}
1, & \text{if } a_0=0,\\
0, & \text{if } a_0\neq 0.
\end{cases}
$$
\end{theorem}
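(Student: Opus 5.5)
The plan is to split the root count according to whether the root is zero or not:
$$
N_0=\#\{\g\in\Fq^\times\mid P(\g)=0\}+[P(0)=0].
$$
We then handle the two pieces separately, replacing $P$ first by $R$ and then by $S$ on $\Fq^\times$, and invoking the K\"onig--Rados theorem (Theorem~\ref{kongrados}) for the nonzero roots.

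\textbf{Replacing $P$ by $R$ and then by $S$.} First, since $P-R$ is divisible by $Z^q-Z=\prod_{\g\in\Fq}(Z-\g)$, we have $P(\g)=R(\g)$ for all $\g\in\Fq$. In particular
$$
P(0)=R(0)=a_0,
$$
so the contribution of $\g=0$ is exactly $\delta_{a_0}$. Next, for $\g\in\Fq^\times$ we have $\g^{q-1}=1$, so
$$
R(\g)=(a_0+a_{q-1})+a_1\g+\cdots+a_{q-2}\g^{q-2}=S(\g).
$$
Hence the nonzero roots of $P$ in $\Fq$ are exactly the nonzero roots of $S$ in $\Fq$.

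\textbf{Applying K\"onig--Rados.} Since $S$ has degree at most $q-2$, it is written in exactly the form required by Theorem~\ref{kongrados}, with coefficient vector $(a_0+a_{q-1},a_1,\dots,a_{q-2})$. The theorem then gives that the number of nonzero roots of $S$ is $q-1-\rho$, where $\rho$ is the rank of the left circulant matrix of $S$. Adding the contribution $\delta_{a_0}$ from $\g=0$ yields
$$
N_0=q-1-\rho+\delta_{a_0}.
$$

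\textbf{The delicate point.} The main subtlety is that the zero-root indicator must use $a_0=R(0)$ and not the constant term $a_0+a_{q-1}$ of $S$. Folding the $Z^{q-1}$ term into the constant is valid only on $\Fq^\times$, so $S(0)$ generally differs from $P(0)$. We also note where the hypothesis $q>2$ enters: it guarantees $q-2\ge1$, so that the exponents $1,\dots,q-2$ are genuine and the circulant matrix of size $(q-1)\times(q-1)$ is formed exactly as in Definition~\ref{leftcirculant}. The case $q=2$ (a $1\times1$ matrix) is excluded to avoid notational ambiguity.
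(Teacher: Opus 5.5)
Your proposal is correct and follows the paper's proof essentially step for step. You use $P(\g)=R(\g)$ on $\Fq$, read off the contribution of $\g=0$ from $R(0)=a_0$, fold $a_{q-1}Z^{q-1}$ into the constant term on $\Fq^\times$ to obtain $S$, and apply the K\"onig--Rados theorem to $S$, which is allowed since $\deg S\le q-2$.
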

\begin{proof}
Since
$$
R(Z)\equiv P(Z)\pmod{Z^q-Z},
$$
we have $P(\g)=R(\g)$ for every $\g\in\Fq$. Hence
$$
N_0=\#\{\g\in\Fq\mid R(\g)=0\}.
$$
\noindent
Note that, the point $\g=0$ contributes $\delta_{a_0}$, because $R(0)=a_0$.
\medskip
\noindent
Further, for $\g\in\Fq^\times$, we have $\g^{q-1}=1$. Therefore
\begin{align*}
R(\g)
&= a_0+a_1\g+\cdots+a_{q-2}\g^{q-2}+a_{q-1}\g^{q-1} \\
&= (a_0+a_{q-1})+a_1\g+\cdots+a_{q-2}\g^{q-2} \\
&= S(\g).  
\end{align*}
\noindent
Thus the number of nonzero roots of $R(Z)$ in $\Fq$ is the same as the number of nonzero roots of $S(Z)$ in $\Fq$. Since $\deg S\le q-2$, the K\"onig--Rados theorem applies to $S(Z)$ and gives that the number of nonzero roots of $S(Z)$ in $\Fq$ is
$$
q-1-\rho,
$$
where $\rho$ is the rank of the left circulant matrix associated with $S(Z)$. Hence
$$
N_0
=
(q-1-\rho)+\delta_{a_0}.
$$
\end{proof}

\noindent
We now study a class of cases in which $P(Z)$ is a homogeneous polynomial.

\begin{theorem}
Let $P(Z)\in\Fq[Z]$ be a nonzero homogeneous polynomial of degree $s\ge 2$. Then $N_0=1.$
\end{theorem}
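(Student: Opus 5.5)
A homogeneous polynomial of degree $s$ in the single variable $Z$ is forced to be a monomial: every term has total degree $s$, so the only term allowed is $cZ^s$. Hence $P(Z)=cZ^s$ with $c\in\Fq^\times$, because $P$ is nonzero. The plan is to reduce to this monomial form and then count the roots of $cZ^s$ in $\Fq$ directly.

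The count itself is short. First, $\g=0$ is a root, since $s\ge 2\ge 1$ gives $P(0)=c\cdot 0^s=0$. Second, for $\g\in\Fq^\times$ we have $\g^s\neq 0$, because $\Fq$ is a field and so has no zero divisors. Together with $c\neq0$ this gives $P(\g)=c\g^s\neq0$. So the zero set of $P$ in $\Fq$ is exactly $\{0\}$, and $N_0=1$.

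As a consistency check, the same result follows from Proposition~\ref{gcdargument}. We have $\gcd(cZ^s,\,Z^q-Z)=Z$, since $Z^q-Z=Z(Z^{q-1}-1)$ and $Z^{q-1}-1$ has nonzero constant term, so it is coprime to $Z$. This gcd has degree $1$, matching $N_0=1$.

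There is no real obstacle. The only point that needs stating explicitly is the reduction in the first paragraph, that homogeneity in one variable means $P$ is a single monomial $cZ^s$ with $c\neq0$. After that, the argument needs only that a field has no zero divisors.
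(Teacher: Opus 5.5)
Your proof is correct and follows the same route as the paper: you reduce to $P(Z)=cZ^s$ with $c\in\Fq^\times$ and observe that $Z=0$ is the only root in $\Fq$. The gcd cross-check via Proposition~\ref{gcdargument} is also valid, though not needed.
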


\begin{proof}
Since $P(Z)$ is a nonzero homogeneous polynomial in one variable, there exists $a\in\Fq^\times$ such that $P(Z)=aZ^s.$ Thus $P(Z)=0$ has exactly one solution in $\Fq$, namely $Z=0$. Therefore $N_0=1.$
\end{proof}

\noindent
We next consider the case where $P(Z)$ is monomial or binomial in $Z$. We use the following standard result.

\begin{lemma}[\cite{IrelandRosen1990}, Proposition 7.1.2]\label{lem:binomial-solvability} 
Let $a\in \Fq^\times$, and let $d_m=\gcd(m,q-1).$ Then the equation $X^m=a$ has solutions in $\Fq$ if and only if $a^{(q-1)/d_m}=1.$ If there are solutions, then there are exactly $d_m$ solutions. 
\end{lemma}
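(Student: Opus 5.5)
The statement is Lemma~\ref{lem:binomial-solvability}: for $a\in\Fq^\times$ and $d_m=\gcd(m,q-1)$, the equation $X^m=a$ is solvable in $\Fq$ exactly when $a^{(q-1)/d_m}=1$, and then it has $d_m$ solutions. I will prove it using only that $\Fq^\times$ is cyclic of order $q-1$.

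First, any solution must lie in $\Fq^\times$, since $a\neq0$ forces $X\neq0$. So it suffices to study the power map
$$\phi:\Fq^\times\to\Fq^\times,\qquad x\mapsto x^m.$$
This map is a group homomorphism, so the solution set of $X^m=a$ is either empty or a coset of $\ker\phi$. Hence the number of solutions is $0$ or $|\ker\phi|$, and the problem reduces to computing $\ker\phi$ and $\operatorname{im}\phi$.

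Fix a generator $g$ of $\Fq^\times$. Then $g^k\in\ker\phi$ if and only if $q-1\mid km$. Dividing through by $d_m$, this is equivalent to $\tfrac{q-1}{d_m}\mid k\cdot\tfrac{m}{d_m}$. Since $\tfrac{q-1}{d_m}$ and $\tfrac{m}{d_m}$ are coprime, this happens if and only if $\tfrac{q-1}{d_m}\mid k$. The values of $k$ in $\{0,\dots,q-2\}$ satisfying this number $d_m$, so $|\ker\phi|=d_m$.

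It follows that $|\operatorname{im}\phi|=(q-1)/d_m$. Because $\Fq^\times$ is cyclic, it has a unique subgroup of each order dividing $q-1$. The subgroup of order $(q-1)/d_m$ is exactly
$$\{y\in\Fq^\times : y^{(q-1)/d_m}=1\},$$
since the polynomial $Y^{(q-1)/d_m}-1$ has at most $(q-1)/d_m$ roots, and the elements $g^{jd_m}$ for $0\le j<(q-1)/d_m$ supply that many roots.

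Therefore $a\in\operatorname{im}\phi$ if and only if $a^{(q-1)/d_m}=1$. In that case the solutions form a coset of $\ker\phi$, so there are exactly $d_m$ of them.

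The only delicate step is the identification of $\operatorname{im}\phi$ with the kernel of $y\mapsto y^{(q-1)/d_m}$. One route is the uniqueness of subgroups in a cyclic group, as above. Alternatively, one can verify directly that $(x^m)^{(q-1)/d_m}=(x^{q-1})^{m/d_m}=1$, which gives one inclusion, and then conclude equality by comparing cardinalities. I will use this direct check together with the order count, which avoids the subgroup-uniqueness argument.
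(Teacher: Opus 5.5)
Your proof is correct, and it takes a different route from the source. The paper gives no proof of its own; it cites Ireland--Rosen, where the argument is by indices. One fixes a generator $g$ of $\Fq^\times$ and writes $a=g^b$, $X=g^y$. Then $X^m=a$ becomes the linear congruence $my\equiv b\pmod{q-1}$, which is solvable if and only if $d_m\mid b$ and then has exactly $d_m$ solutions modulo $q-1$. Finally, $a^{(q-1)/d_m}=g^{b(q-1)/d_m}=1$ holds if and only if $d_m\mid b$.

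You instead treat $\phi\colon x\mapsto x^m$ as an endomorphism of $\Fq^\times$, so every nonempty fibre is a coset of $\ker\phi$. You identify the image without coordinates, via the inclusion $(x^m)^{(q-1)/d_m}=1$ and a cardinality count using the root bound for $Y^{(q-1)/d_m}-1$. Of your two routes for that step, this one suffices, and the subgroup-uniqueness remark can be dropped.

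The index proof is shorter once the linear-congruence theorem is available, but everything in it is expressed through a chosen generator. Your version is more structural. It exhibits the solution set as a coset of the $d_m$-th roots of unity, and the solvable $a$ as the subgroup $(\Fq^\times)^m=(\Fq^\times)^{d_m}$. This is exactly the description behind the character formula of Lemma~\ref{multcharsol}, since the characters with $\psi^{d_m}=\psi_0$ are precisely those trivial on this subgroup.

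Your framework also uses the generator only to compute $|\ker\phi|$, and even that can be avoided. Because $d_m$ is an integer combination of $m$ and $q-1$, the kernel is the root set of $Y^{d_m}-1$. That polynomial has exactly $d_m$ roots, since it divides $Y^{q-1}-1=\prod_{c\in\Fq^\times}(Y-c)$. So your argument proves the lemma without using the cyclicity of $\Fq^\times$ at all.
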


\begin{theorem}\label{thm:single-binomial-N0}
Suppose that $P(Z)=Z^m-a,$ where $m\ge 1$ and $a\in\Fq$. Let $d_m=\gcd(m,q-1).$ Then
$$
N_0=
\begin{cases}
1, & a=0,\\
0, & a\neq 0 \text{ and } a^{(q-1)/d_m}\neq 1,\\
d_m, & a\neq 0 \text{ and } a^{(q-1)/d_m}=1.
\end{cases}
$$
\end{theorem}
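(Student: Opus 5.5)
The plan is to split into the two cases $a=0$ and $a\neq0$, and to reduce the second case directly to Lemma~\ref{lem:binomial-solvability}.

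\emph{Case $a=0$.} Here $P(Z)=Z^m$ with $m\ge1$. Since $\Fq$ is a field it has no zero divisors, so $\g^m=0$ holds if and only if $\g=0$. Hence $N_0=1$.

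\emph{Case $a\neq0$.} First I would observe that $\g=0$ is never a root, because $P(0)=-a\neq0$. So every root lies in $\Fq^\times$, and $N_0$ equals the number of solutions of $X^m=a$ in $\Fq$, all of which are nonzero. Lemma~\ref{lem:binomial-solvability} applies with $d_m=\gcd(m,q-1)$. It says there are no solutions when $a^{(q-1)/d_m}\neq1$, giving $N_0=0$. When $a^{(q-1)/d_m}=1$, it gives exactly $d_m$ solutions, so $N_0=d_m$.

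If I wanted to justify the lemma rather than cite it, I would use the cyclicity of $\Fq^\times$. The map $x\mapsto x^m$ on this cyclic group of order $q-1$ has kernel of size $d_m$. Its image is the subgroup of $d_m$-th powers, which is exactly $\{b: b^{(q-1)/d_m}=1\}$. Each element of the image has exactly $d_m$ preimages.

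No step here is a real obstacle. The only point needing care is to exclude $\g=0$ in the case $a\neq0$, so that the lemma, which counts all solutions in $\Fq$, matches $N_0$ exactly.
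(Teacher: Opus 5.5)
Your proposal is correct and follows essentially the same route as the paper: the case $a=0$ gives the single root $\g=0$, and for $a\neq 0$ the count comes directly from Lemma~\ref{lem:binomial-solvability}. Your explicit remark that $\g=0$ is never a root when $a\neq0$, and your sketch of the lemma via the cyclicity of $\Fq^\times$, are both correct but not needed beyond what the paper's short argument already gives.
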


\begin{proof}
If $a=0$, then the only solution is $\g=0$, so $N_0=1$. Now assume that $a\neq 0$. By Lemma~\ref{lem:binomial-solvability}, the equation $Z^m=a$ has solutions in $\Fq$ if and only if $a^{(q-1)/d_m}=1.$ If this condition holds, then there are exactly $d_m$ solutions; otherwise there are no solutions. This gives us the required value of $N_0$ in all the cases.
\end{proof}

\begin{corollary}
If $P(Z)=Z^m$ for some $m\geq2$ is a monomial, then $N_0=1.$
\end{corollary}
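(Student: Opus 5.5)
This is the special case $a=0$ of Theorem~\ref{thm:single-binomial-N0}, and that is how I will prove it. Writing $P(Z)=Z^m=Z^m-0$, the first case of that theorem gives $N_0=1$ at once. The hypothesis $m\ge 2$ plays no role in the count. It is only there so that $P$ satisfies the standing assumption $\deg P\ge 2$ for a Danielewski surface.

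For a self-contained check, I will also verify the count directly from the definition of $N_0$. Since $\Fq$ is a field, it has no zero divisors. So $\g^m=0$ forces $\g=0$, by induction on $m$. Conversely, $\g=0$ is certainly a root. Hence $\{\g\in\Fq\mid \g^m=0\}=\{0\}$ and $N_0=1$. The same conclusion also follows from the earlier result on homogeneous polynomials, taking $a=1$ and $s=m$.

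There is no real obstacle here. The only point to note is that the case $a=0$ of Theorem~\ref{thm:single-binomial-N0} does not depend on Lemma~\ref{lem:binomial-solvability}, which needs $a\in\Fq^\times$. I would then close by recording the consequence $|S_d(\Fq)|=q\cdot q=q^2$ from Remark~\ref{dan}.
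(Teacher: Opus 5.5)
Your proposal is correct and matches the paper's proof: the paper also obtains the corollary by specializing Theorem~\ref{thm:single-binomial-N0} to $a=0$. Your direct zero-divisor check, and the consequence $|S_d(\Fq)|=q(q-1+1)=q^2$, are valid additions.
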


\begin{proof}
The proof follows from Theorem \ref{thm:single-binomial-N0}.
\end{proof}

\begin{theorem}\label{thm:single-binomial-two-term}
Suppose that $P(Z)=aZ^m-bZ^n,$ ~ $0\le n<m\le q-1,$ where $a,b\in\Fq$. Then the following hold.
\begin{enumerate}
\item If $n>0$, set $d_{m-n}=\gcd(m-n,q-1)$. Then
$$
N_0=
\begin{cases}
q, & a=0,\ b=0,\\
1+d_{m-n}, & ab\neq0 \text{ and }
\left(\dfrac{b}{a}\right)^{(q-1)/d_{m-n}}=1,\\
1, & \text{otherwise}.
\end{cases}
$$

\item If $n=0$, set $d_m=\gcd(m,q-1)$. Then
$$
N_0=
\begin{cases}
q, & a=0,\ b=0,\\
1, & a\neq 0,\ b=0,\\
d_m, & ab\neq 0 \text{ and } \left(\dfrac{b}{a}\right)^{(q-1)/d_m}=1,\\
0, & \text{otherwise}.
\end{cases}
$$
\end{enumerate}
\end{theorem}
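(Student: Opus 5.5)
We prove the statement by splitting into the cases listed and reducing each to Lemma~\ref{lem:binomial-solvability} or to Theorem~\ref{thm:single-binomial-N0}. If $a=b=0$, then $P$ is the zero polynomial and every $\g\in\Fq$ is a root, so $N_0=q$. This disposes of the first line in both parts. From now on assume $(a,b)\neq(0,0)$.

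\emph{Part (i), $n>0$.} We factor
$$
P(Z)=Z^n\bigl(aZ^{m-n}-b\bigr).
$$
Since $n\ge1$, $\g=0$ is always a root. For $\g\in\Fq^\times$ we have $\g^n\neq0$, so $P(\g)=0$ if and only if $a\g^{m-n}=b$.
\begin{itemize}
\item If exactly one of $a,b$ is zero, this equation has no nonzero solution: for $a=0$ it reads $b=0$, and for $b=0$ it forces $\g=0$. Hence $N_0=1$.
\item If $ab\neq0$, the equation is $\g^{m-n}=b/a$ with $b/a\in\Fq^\times$. Lemma~\ref{lem:binomial-solvability} gives exactly $d_{m-n}$ solutions when $(b/a)^{(q-1)/d_{m-n}}=1$ and none otherwise. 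All these solutions are automatically nonzero, so they never coincide with the root $\g=0$.
\end{itemize}
Adding the root $\g=0$ gives $N_0=1+d_{m-n}$ or $N_0=1$, which matches the statement.

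\emph{Part (ii), $n=0$.} Here $P(Z)=aZ^m-b$.
\begin{itemize}
\item If $a\neq0$ and $b=0$, then $P=aZ^m$ has the single root $0$ (note $m\ge1$), so $N_0=1$.
\item If $a=0$ and $b\neq0$, then $P=-b$ is a nonzero constant, so $N_0=0$. This falls under ``otherwise''.
\item If $ab\neq0$, then $P(\g)=0$ if and only if $\g^m=b/a$. Theorem~\ref{thm:single-binomial-N0}, applied to $Z^m-b/a$, gives $N_0=d_m$ or $N_0=0$ according to the power condition.
\end{itemize}

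The only step needing care is the bookkeeping in part (i). We must make sure that $\g=0$ is counted exactly once, and that the ``otherwise'' line correctly collects the cases $a=0$, $b\neq0$; $a\neq0$, $b=0$; and $ab\neq0$ with the power condition failing. The hypothesis $m\le q-1$ is not needed for the argument. It only guarantees that the exponents are reduced.
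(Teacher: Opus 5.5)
Your proof is correct and follows essentially the same route as the paper: in part (i) you factor $P(Z)=Z^n(aZ^{m-n}-b)$, count the root $\g=0$ separately, and apply Lemma~\ref{lem:binomial-solvability} to $Z^{m-n}=b/a$; in part (ii) you treat the degenerate cases $a=0$ or $b=0$ directly and apply the lemma to $Z^m=b/a$. The only cosmetic difference is that for the case $ab\neq0$ in part (ii) you cite Theorem~\ref{thm:single-binomial-N0} instead of the lemma itself, which amounts to the same argument.
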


\begin{proof}
\begin{enumerate}
\item Suppose that $n>0$. Then
$
aZ^m-bZ^n=Z^n(aZ^{m-n}-b).
$

\medskip
\noindent
Thus, $\g=0$ is always a solution. If $a=b=0$, then all elements of $\Fq$ are solutions. If exactly one of $a,b$ is zero, then no nonzero solution occurs, so only $\g=0$ remains. If $ab\neq0$, then the nonzero solutions satisfy
$$
Z^{m-n}=\frac{b}{a}.
$$
By Lemma~\ref{lem:binomial-solvability}, this equation has exactly
$d_{m-n}=\gcd(m-n,q-1)$ solutions if and only if
$$
\left(\frac{b}{a}\right)^{(q-1)/d_{m-n}}=1,
$$
and has no solution otherwise. This gives the formula in $(i)$.

\item Suppose that $n=0$. Then $P(Z)=aZ^m-b$.

\medskip
\noindent
If $a=b=0$, then all elements of $\Fq$ are solutions. If $a=0$ and $b\neq0$, then there is no solution. If $a\neq0$ and $b=0$, then only $\g=0$ is a solution. If $ab\neq0$, then
$$
Z^m=\frac{b}{a}.
$$
By Lemma~\ref{lem:binomial-solvability}, this equation has exactly
$d_m=\gcd(m,q-1)$ solutions if and only if
$$
\left(\frac{b}{a}\right)^{(q-1)/d_m}=1,
$$
and has no solution otherwise. This gives the formula in $(ii)$.
\end{enumerate}
\end{proof}
\noindent
\noindent
Taking $a=1$ in Theorem~\ref{thm:single-binomial-two-term} gives the following.

\begin{corollary}\label{cor:single-binomial-a1}
Suppose that $P(Z)=Z^m-bZ^n$, $0\leq n<m\leq q-1$, where $b\in\Fq$. Then the following hold.

\begin{enumerate}
\item
If $n>0$ and $d_{m-n}=\gcd(m-n,q-1)$, then
$$
N_0=
\begin{cases}
1+d_{m-n}, & b\neq0 \text{ and } b^{(q-1)/d_{m-n}}=1,\\
1, & \text{otherwise}.
\end{cases}
$$

\item
If $n=0$ and $d_m=\gcd(m,q-1)$, then
$$
N_0=
\begin{cases}
1, & b=0,\\
d_m, & b\neq0 \text{ and } b^{(q-1)/d_m}=1,\\
0, & \text{otherwise}.
\end{cases}
$$
\end{enumerate}
\end{corollary}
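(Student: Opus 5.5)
The plan is to specialise Theorem~\ref{thm:single-binomial-two-term} to $a=1$ and simplify the case distinctions. The polynomial $P(Z)=Z^m-bZ^n$ is of the form $aZ^m-bZ^n$ with $a=1$ and $0\le n<m\le q-1$, so all hypotheses of that theorem are met. Since $a=1\neq 0$, the condition $ab\neq0$ becomes $b\neq0$, and the quotient $b/a$ becomes $b$.

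For part (i), with $n>0$, there are three cases. The case $a=b=0$ of Theorem~\ref{thm:single-binomial-two-term}(i) cannot occur, so the value $q$ drops out. The middle case gives $N_0=1+d_{m-n}$ exactly when $b\neq0$ and $b^{(q-1)/d_{m-n}}=1$. The ``otherwise'' case gives $N_0=1$. This is precisely the stated formula.

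For part (ii), with $n=0$, I go through the cases of Theorem~\ref{thm:single-binomial-two-term}(ii) the same way. The case $a=b=0$ is again excluded. The case $a\neq0$, $b=0$ becomes $b=0$ and gives $N_0=1$. The case $ab\neq0$ with $(b/a)^{(q-1)/d_m}=1$ becomes $b\neq0$ with $b^{(q-1)/d_m}=1$ and gives $N_0=d_m$. The remaining case, $b\neq0$ with $b^{(q-1)/d_m}\neq1$, gives $N_0=0$.

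Alternatively, part (ii) can be read off directly from Theorem~\ref{thm:single-binomial-N0} with $P(Z)=Z^m-b$. That route serves as a consistency check.

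There is no real obstacle. The only point needing care is confirming that the degenerate case $a=b=0$ is excluded once $a=1$. This guarantees that the ``otherwise'' branches cover exactly the listed remaining situations and nothing else.
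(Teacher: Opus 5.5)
Your proposal is correct and matches the paper's argument: the paper obtains this corollary by setting $a=1$ in Theorem~\ref{thm:single-binomial-two-term}. Your case-by-case check, including that the $a=b=0$ branch disappears, is the verification needed, and cross-checking part (ii) against Theorem~\ref{thm:single-binomial-N0} is a sound consistency check.
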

\noindent
Further, as a direct consequence of the K\"onig--Rados theorem, we obtain the following expression for $N_0$ when $P(Z)$ has degree at most $q-2$.

\begin{prop}\label{prop:KR-count}
Let $q>3$ and let $P(Z)=a_0+a_1Z+a_2Z^2+\cdots+a_{q-2}Z^{q-2}\in\Fq[Z].$ Then $N_0 = q-1-\rho_{P}+\delta_{a_0},$ where $\rho_{P}$ denotes the rank of the left circulant matrix associated with $P(Z)$, and
$$
\delta_{a_0}=
\begin{cases}
1, & \text{if } a_0=0,\\
0, & \text{if } a_0\neq0.
\end{cases}
$$
\end{prop}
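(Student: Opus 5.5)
The plan is to split the roots of $P$ into the root $\g=0$ and the roots in $\Fq^\times$, and to count the latter with the K\"onig--Rados theorem (Theorem~\ref{kongrados}); this is the special case of Theorem~\ref{thm:KR-general-N0_1} in which no reduction modulo $Z^q-Z$ is needed.

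First, I would note that $\deg P\le q-2$, so $P$ is already written in the form required by Theorem~\ref{kongrados}. That theorem applies directly, with the left circulant matrix built from the coefficients $a_0,\dots,a_{q-2}$ of $P$ itself. It gives that the number of $\g\in\Fq^\times$ with $P(\g)=0$ equals $q-1-\rho_P$.

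Second, I would count the root at $0$ separately. Since $P(0)=a_0$, the element $\g=0$ is a root exactly when $a_0=0$, so it contributes $\delta_{a_0}$. Because $\Fq=\{0\}\sqcup\Fq^\times$, adding the two counts gives $N_0=q-1-\rho_P+\delta_{a_0}$.

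Alternatively, one can cite Theorem~\ref{thm:KR-general-N0_1} directly. When $\deg P\le q-2$, the reduction $R$ equals $P$ and $a_{q-1}=0$, so $S=P$ and the formula is the same. The hypothesis $q>3$ only ensures $q>2$ as required there, so no step is a real obstacle; the only care needed is to keep the zero root separate, because K\"onig--Rados counts only nonzero solutions.
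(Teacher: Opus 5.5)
Your proposal is correct and matches the paper: your alternative route (specializing Theorem~\ref{thm:KR-general-N0_1} with $R=S=P$, since $\deg P\le q-2$) is exactly the paper's one-line proof. Your direct argument, K\"onig--Rados for the nonzero roots plus the separate root $\gamma=0$ contributing $\delta_{a_0}$, is just that theorem's proof carried out in this special case.
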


\begin{proof}
Since $\deg P\leq q-2$, we have $R(Z)=S(Z)=P(Z)$ in Theorem~\ref{thm:KR-general-N0_1}. Hence the stated formula follows.
\end{proof}

\noindent
Furthermore, for prime fields, multiplicative characters and the quadratic character give further formulas for $N_0$. We first recall the following two results from \cite{IrelandRosen1990}.

\begin{lemma}[\cite{IrelandRosen1990}, Exercise 8.1]\label{multcharsol}
Let $p$ be a prime, let $m\geq1$, and let $a\in\F_p^\times$. Set
$d_m=\gcd(m,p-1)$. Then the number of solutions in $\FF_p$ to the equation $X^m=a$ is given by $\sum_{\psi^{d_m}=\psi_0}\psi(a),$ where the sum is taken over all multiplicative characters $\psi$ of $\FF_p$ satisfying $\psi^{d_m}=\psi_0$, and where $\psi_0$ denotes the trivial multiplicative character.
\end{lemma}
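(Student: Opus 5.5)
We prove the count of solutions of $X^m=a$ in $\FF_p$ with $a\neq0$ by combining Lemma~\ref{lem:binomial-solvability} with the orthogonality relations for characters of a cyclic group. The plan has three steps. First we reduce the count to a question about $d_m$-th powers. Then we identify the characters with $\psi^{d_m}=\psi_0$. Finally we evaluate the character sum.

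For the reduction, Lemma~\ref{lem:binomial-solvability} (with $q=p$) says the number of solutions is $d_m$ if $a^{(p-1)/d_m}=1$ and $0$ otherwise. Since $\FF_p^\times$ is cyclic of order $p-1$ and $d_m\mid p-1$, the condition $a^{(p-1)/d_m}=1$ holds exactly when $a$ lies in the unique subgroup $H$ of index $d_m$. That subgroup is $H=(\FF_p^\times)^{d_m}$, the set of $d_m$-th powers. So it suffices to show
$$
\sum_{\psi^{d_m}=\psi_0}\psi(a)=
\begin{cases} d_m, & a\in H,\\ 0, & a\notin H.\end{cases}
$$

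Next we describe the characters. Fix a generator $g$ of $\FF_p^\times$. A multiplicative character is determined by $\psi(g)$, which must be a $(p-1)$-th root of unity. The condition $\psi^{d_m}=\psi_0$ means $\psi(g)^{d_m}=1$, so there are exactly $d_m$ such characters, namely $\psi_j(g)=\zeta^j$ for $0\le j<d_m$, where $\zeta=e^{2\pi i/d_m}$. Equivalently, these are precisely the characters trivial on $H$, since $\psi(g^{d_m})=\psi(g)^{d_m}$. They therefore form the character group of the cyclic quotient $\FF_p^\times/H$, which has order $d_m$.

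For the evaluation, write $a=g^k$. Then
$$
\sum_{j=0}^{d_m-1}\zeta^{jk}
$$
is a geometric sum. It equals $d_m$ if $d_m\mid k$, which is exactly the case $a\in H$, and it equals $0$ otherwise. Equivalently, one can apply \eqref{5.2} to the group $G=\FF_p^\times/H$ together with Theorem~\ref{5.5}. Comparing with the reduction step gives the claim.

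Since $a\neq0$, the convention for $\psi(0)$ plays no role. The only point needing care is the identification of $\{\psi:\psi^{d_m}=\psi_0\}$ with the characters trivial on $H$. That identification rests on the cyclicity of $\FF_p^\times$, which the proof uses explicitly.
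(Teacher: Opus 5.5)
Your proof is correct. Each step checks out: the reduction through Lemma~\ref{lem:binomial-solvability} to membership in $H=(\FF_p^\times)^{d_m}$, the identification of $\{\psi:\psi^{d_m}=\psi_0\}$ with the $d_m$ characters trivial on $H$, and the geometric-sum evaluation in terms of $a=g^k$. You are also right that $a\neq0$ makes the convention for $\psi(0)$ irrelevant.

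The paper gives no proof of this lemma; it only cites Ireland--Rosen. So there is no argument in the paper to compare yours against. The usual textbook treatment first proves $\#\{x:x^n=a\}=\sum_{\psi^n=\psi_0}\psi(a)$ for exponents $n\mid p-1$, and then reduces a general $m$ to $d_m$. You instead invoke Lemma~\ref{lem:binomial-solvability}, which is already stated in terms of $\gcd(m,q-1)$, and this handles that reduction in one step.

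Your argument uses only two ingredients: the cyclicity of the multiplicative group and Lemma~\ref{lem:binomial-solvability}. Both hold over an arbitrary $\Fq$, so your proof carries over verbatim to $\Fq$. This supplies the justification for the claim in the proof of Theorem~\ref{thm:multchar-binomial} that the formula ``extends in the same way to $\Fq$'', which the paper asserts without proof.
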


\begin{lemma}[\cite{IrelandRosen1990}, Exercise 5.3]
\label{quadraticformula}
Let $p$ be an odd prime and let $a,b,c\in\F_p$ with $a\neq0$. Then the number of solutions in $\F_p$ to
$
aX^2+bX+c=0
$
is
$
1+\eta(\Delta),
$
where $\Delta=b^2-4ac$ and $\eta$ denotes the quadratic character of $\F_p$.
\end{lemma}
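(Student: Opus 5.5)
The statement to prove is Lemma~\ref{quadraticformula}: for an odd prime $p$ and $a\neq0$, the equation $aX^2+bX+c=0$ has exactly $1+\eta(\Delta)$ solutions in $\F_p$, where $\Delta=b^2-4ac$. The plan is to complete the square, which turns the problem into counting square roots of $\Delta$.

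\emph{Step 1: reduction to $Y^2=\Delta$.} Since $p$ is odd, both $2$ and $4a$ are invertible in $\F_p$. Multiplying the equation by $4a$ gives the equivalent equation
$$
4a^2X^2+4abX+4ac=0,
$$
which can be rewritten as
$$
(2aX+b)^2=b^2-4ac=\Delta.
$$
Multiplication by $4a$ is reversible, so the two equations have the same solutions. The affine map $X\mapsto Y=2aX+b$ is a bijection of $\F_p$ to itself, because $2a\neq0$. Hence the number of solutions of $aX^2+bX+c=0$ equals the number of $Y\in\F_p$ with $Y^2=\Delta$.

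\emph{Step 2: count the square roots.} We split into three cases.
\begin{enumerate}
\item If $\Delta=0$, then $Y^2=0$ forces $Y=0$, since a field has no nonzero nilpotents. This gives exactly one solution, and $1=1+\eta(0)$.
\item If $\Delta$ is a nonzero square, we apply Lemma~\ref{lem:binomial-solvability} with $q=p$ and $m=2$. Here $d_2=\gcd(2,p-1)=2$ because $p$ is odd. That lemma gives exactly $2$ solutions, and $2=1+\eta(\Delta)$.
\item If $\Delta$ is a non-square, there are no solutions, and $0=1+\eta(\Delta)$.
\end{enumerate}
Alternatively, Lemma~\ref{multcharsol} with $m=2$ gives the count $\psi_0(\Delta)+\eta(\Delta)$ directly for $\Delta\neq0$. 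The characters with $\psi^2=\psi_0$ are exactly $\psi_0$ and $\eta$, so this equals $1+\eta(\Delta)$.

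\emph{Main point to check.} The argument is elementary, and the only real dependence is on $p$ being odd. Oddness is used in two places: it makes $2a$ and $4a$ invertible, so that completing the square is a reversible operation; and it makes $d_2=2$ in Step 2. The same argument therefore works over any $\Fq$ with $q$ odd.
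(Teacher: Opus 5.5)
Your proof is correct. The paper gives no proof of its own and only cites Ireland--Rosen; your argument is exactly the standard solution behind that exercise. You reduce to $Y^2=\Delta$ via the bijection $X\mapsto 2aX+b$, and then use the count $\#\{x\in\F_p\mid x^2=a\}=1+\eta(a)$, which the paper records in Remark~\ref{rem:quadratic-character-prime}; your remark that it extends verbatim to any $\Fq$ with $q$ odd is also right.
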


\medskip
\noindent
Combining the above lemmas, we obtain explicit expressions for $N_0$ in the following situations.

\begin{theorem}
Assume that $q=p$ is an odd prime. Then the following hold.

\begin{enumerate}[label=(\roman*)]
\item
Consider the binomial  $P(Z)=Z^n-bZ^m,$ where $0\le m<n$ and $b\in\F_p$. Let $d_{n-m}=\gcd(n-m,p-1)$ and define
$$
\delta_m=
\begin{cases}
0, & m=0,\\
1, & m>0.
\end{cases}
$$
Then
$$
N_0=
\begin{cases}
1, & b=0,\\[1mm]
\displaystyle
\delta_m+\sum_{\psi^{d_{n-m}}=\psi_0}\psi(b),
& b\neq0,
\end{cases}
$$
where the sum is taken over all multiplicative characters $\psi$ of $\F_p^\times$ satisfying $\psi^{d_{n-m}}=\psi_0$.

\item For the trinomial polynomial $P(Z)=aZ^2+bZ+c,$ where $a,b,c\in\F_p$ with $a\neq 0$, we have $N_0=1+\eta(\Delta),$ where $\Delta=b^2-4ac$ and $\eta$ denotes the quadratic character of $\F_p$.
\end{enumerate}
\end{theorem}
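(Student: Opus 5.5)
Part (ii) is a direct application of Lemma~\ref{quadraticformula}. Since $a\neq0$ and $p$ is odd, $N_0$ is exactly the number of solutions in $\F_p$ of $aZ^2+bZ+c=0$, so $N_0=1+\eta(\Delta)$ with $\Delta=b^2-4ac$.

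For part (i) I will split according to whether $b$ vanishes. If $b=0$, then $P(Z)=Z^n$ with $n>m\ge0$, so $n\ge1$ and the only root in $\F_p$ is $\g=0$. Hence $N_0=1$, as in the corollary to Theorem~\ref{thm:single-binomial-N0}.

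Now assume $b\neq0$ and factor
$$
P(Z)=Z^m\bigl(Z^{n-m}-b\bigr).
$$
I first count the nonzero roots. A nonzero $\g$ is a root exactly when $\g^{n-m}=b$. Since $b\neq0$, every solution of $X^{n-m}=b$ is automatically nonzero. So the nonzero roots of $P$ are precisely the solutions in $\F_p$ of $X^{n-m}=b$. By Lemma~\ref{multcharsol}, applied with exponent $n-m\ge1$ and $a=b\in\F_p^\times$, their number is
$$
\sum_{\psi^{d_{n-m}}=\psi_0}\psi(b).
$$

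Next I check the root $\g=0$. We have $P(0)=0$ if and only if the factor $Z^m$ vanishes at $0$, that is, if and only if $m>0$. When $m=0$, we get $P(0)=-b\neq0$. So $\g=0$ contributes exactly $\delta_m$.

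Adding the two contributions gives the stated formula. The only step needing care is the disjointness bookkeeping: $\g=0$ never solves $X^{n-m}=b$ because $b\ne0$, so the character sum and $\delta_m$ count disjoint sets of roots and nothing is counted twice.
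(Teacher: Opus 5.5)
Your proof is correct and follows the same route as the paper, which simply derives part (i) from Lemma~\ref{multcharsol} and part (ii) from Lemma~\ref{quadraticformula}. You also spell out details the paper leaves implicit: the case $b=0$, the factorization $P(Z)=Z^m(Z^{n-m}-b)$, and the separate contribution $\delta_m$ of the root $0$.
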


\begin{proof}
The proof of part $(i)$ follows from Lemma~\ref{multcharsol} and part $(ii)$  from Lemma~\ref{quadraticformula} respectively.
\end{proof}

\section{Number of \texorpdfstring{$\Fq$}{Fq}-Points on the Double Danielewski Surface}{\label{S4}}

\noindent
In this section, we determine the number of $\Fq$-rational points on double Danielewski surfaces. As in the Danielewski case, we separate the cases $X=0$ and $X\neq0$. This reduces the problem to the computation of $N_0'$, which is studied in the following subsections.

\subsection{General decomposition and the quantity \texorpdfstring{$N_0'$}{N0'}}

\noindent
Consider the double Danielewski surface
$$
S_{d_1,d_2}:
\quad
X^{d_1}Y=P(X,Z),
\qquad
X^{d_2}T=Q(X,Y,Z),
$$
where $d_1,d_2\geq1$, $P(X,Z)\in\Fq[X,Z]$ is monic in $Z$, and $Q(X,Y,Z)\in\Fq[X,Y,Z]$ is monic in $Y$, with
$$
\deg_Z P(X,Z)\geq2,
\qquad
\deg_Y Q(X,Y,Z)\geq2.
$$
Thus,
$$
S_{d_1,d_2}(\Fq)
=
\Bigl\{
(\al,\be,\g,\ka)\in\Fq^4
\mid
\al^{d_1}\be=P(\al,\g),\
\al^{d_2}\ka=Q(\al,\be,\g)
\Bigr\}.
$$

\medskip
\noindent
As in the Danielewski case, we decompose $S_{d_1,d_2}(\Fq)$ according to whether the $X$-coordinate vanishes or not. Define
$$
S_1'
=
\{(\al,\be,\g,\ka)\in S_{d_1,d_2}(\Fq)\mid \al=0\},
\qquad
S_2'
=
\{(\al,\be,\g,\ka)\in S_{d_1,d_2}(\Fq)\mid \al\neq0\}.
$$
Then
$$
S_{d_1,d_2}(\Fq)=S_1'\sqcup S_2'.
$$

\medskip
\noindent
If $\al=0,$ then the defining equations reduce to $P(0,\g)=0,$ and $Q(0,\be,\g)=0.$ Hence
$$
S_1'
=
\{(0,\be,\g,\ka)\in \Fq^4\mid P(0,\g)=0,\;Q(0,\be,\g)=0\}.
$$
In particular, once $(\be,\g)$ satisfies these two equations, the coordinate $\ka$ may be chosen arbitrarily in $\Fq.$

\noindent
We first count the points in $S_2'$.

\begin{prop}\label{prop:double-nonzero-x}
The number of $\Fq$-rational points of $S_{d_1,d_2}$ with $X\neq 0$ is $|S_2'|=q^2-q.$
\end{prop}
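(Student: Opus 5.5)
The plan is to mirror the proof of Proposition~\ref{p1} and build an explicit bijection
$$
\Phi:\ \Fq^\times\times\Fq \longrightarrow S_2',\qquad
(\al,\g)\longmapsto\bigl(\al,\ \be(\al,\g),\ \g,\ \ka(\al,\g)\bigr).
$$
The two coordinates are defined in order:
$$
\be(\al,\g)=\al^{-d_1}P(\al,\g),\qquad \ka(\al,\g)=\al^{-d_2}Q\bigl(\al,\be(\al,\g),\g\bigr).
$$
Both lie in $\Fq$, since $\al\in\Fq^\times$ is invertible and $P,Q$ have coefficients in $\Fq$.

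First I check that $\Phi$ lands in $S_2'$. Multiplying back by $\al^{d_1}$ and $\al^{d_2}$ recovers both defining equations. The first coordinate is nonzero, so the image lies in $S_2'$.

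Next I show $\Phi$ is bijective. Injectivity is immediate, since $\al$ and $\g$ are read off as the first and third coordinates of the image. For surjectivity, take any $(\al,\be,\g,\ka)\in S_2'$. Since $\al\neq0$, the first equation $\al^{d_1}\be=P(\al,\g)$ forces $\be=\al^{-d_1}P(\al,\g)$. Substituting this $\be$, the second equation forces $\ka=\al^{-d_2}Q(\al,\be,\g)$. Hence the point equals $\Phi(\al,\g)$.

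Therefore $|S_2'|=|\Fq^\times|\cdot|\Fq|=(q-1)q=q^2-q$. I expect no real obstacle. The only point needing care is the order of substitution: $\be$ must be determined before $\ka$, because $Q$ depends on $\be$. The hypotheses that $P$ is monic in $Z$, $Q$ is monic in $Y$, and the degree bounds are not used in this count.
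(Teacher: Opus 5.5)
Your proposal is correct and is essentially the paper's own proof. The paper uses the same bijection $\Fq^\times\times\Fq\to S_2'$, $(\al,\g)\mapsto\bigl(\al,\al^{-d_1}P(\al,\g),\g,\al^{-d_2}Q(\al,\al^{-d_1}P(\al,\g),\g)\bigr)$, determining $\be$ before $\ka$, and you only make the injectivity and surjectivity checks explicit where the paper leaves them implicit.
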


\begin{proof}
For each $(\al,\g)\in \Fq^\times\times\Fq$, the defining equations uniquely determine
$$
\be=\al^{-d_1}P(\al,\g),
\qquad
\ka=\al^{-d_2}Q\bigl(\al,\al^{-d_1}P(\al,\g),\g\bigr).
$$
Hence $(\al,\g)\mapsto (\al,\be,\g,\ka)$ defines a bijection
$$
\Fq^\times\times\Fq \xrightarrow{\sim} S_2'.
$$
Therefore
$$
|S_2'|=|\Fq^\times|\,|\Fq|=q^2-q.
$$
\end{proof}

\medskip
\noindent
We next count the points in $S_1'$. Define
$$
N_0'
:=
\#\{(\be,\g)\in\Fq^2
\mid
P(0,\g)=0,\ Q(0,\be,\g)=0\},
$$
the number of pairs $(\be,\g)\in\Fq^2$ satisfying the reduced system at $X=0$.

\begin{prop}\label{prop:double-zero-x}
The number of $\Fq$-rational points of $S_{d_1,d_2}$ with $X=0$ is $|S_1'|=qN_0'.$
\end{prop}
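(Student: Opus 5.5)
The plan is to exhibit an explicit bijection, exactly as in the proof of Proposition~\ref{p2}. Setting $\al=0$ in the defining equations of $S_{d_1,d_2}$ turns $\al^{d_1}\be=P(\al,\g)$ into $P(0,\g)=0$. Since $d_1\ge1$, we have $0^{d_1}=0$. In the same way, $\al^{d_2}\ka=Q(\al,\be,\g)$ becomes $Q(0,\be,\g)=0$, because $d_2\ge1$.

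Neither reduced equation involves $\ka$. This gives the description of $S_1'$ already recorded before Proposition~\ref{prop:double-zero-x}: a point $(0,\be,\g,\ka)\in\Fq^4$ lies in $S_1'$ exactly when $(\be,\g)$ satisfies the reduced system, with $\ka\in\Fq$ arbitrary.

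I would then define the map
$$
\Bigl\{(\be,\g)\in\Fq^2\mid P(0,\g)=0,\ Q(0,\be,\g)=0\Bigr\}\times\Fq\longrightarrow S_1',
\qquad
\bigl((\be,\g),\ka\bigr)\mapsto(0,\be,\g,\ka).
$$
By the description above, this map is well defined and surjective. It is injective because it is the identity on the coordinates $\be,\g,\ka$.

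The domain has cardinality $N_0'\cdot q$ by the definition of $N_0'$. Hence $|S_1'|=qN_0'$.

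There is no real obstacle here. The only point needing care is noting explicitly that $d_1,d_2\ge1$, which is what makes the left-hand sides vanish at $\al=0$ and leaves $\ka$ free.
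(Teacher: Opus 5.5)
Your proof is correct and follows the same route as the paper: setting $\al=0$ reduces the system to $P(0,\g)=0$ and $Q(0,\be,\g)=0$. This leaves exactly $N_0'$ choices for $(\be,\g)$ with $\ka$ free, giving $qN_0'$ points. Your explicit bijection and the remark that $d_1,d_2\ge1$ are needed just make rigorous what the paper states in one line.
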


\begin{proof}
If $\al=0$, then the reduced system is $P(0,\g)=0$ and 
$Q(0,\be,\g)=0.$ Thus there are exactly $N_0'$ choices for the pair $(\be,\g),$ and for each such pair the coordinate $\ka$ may be chosen arbitrarily in $\Fq.$ Hence $|S_1'|=qN_0'.$
\end{proof}

\begin{corollary}\label{cor:double-total-count}
The total number of $\Fq$-rational points on $S_{d_1,d_2}$ is
$$|S_{d_1,d_2}(\Fq)|=q\bigl(q-1+N_0'\bigr).$$
\end{corollary}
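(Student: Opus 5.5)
The corollary follows by combining the decomposition $S_{d_1,d_2}(\Fq)=S_1'\sqcup S_2'$ with the two counts already established. Since the $X$-coordinate of a point either vanishes or does not, the sets $S_1'$ and $S_2'$ are disjoint and cover all of $S_{d_1,d_2}(\Fq)$. Therefore
$$
|S_{d_1,d_2}(\Fq)|=|S_1'|+|S_2'|.
$$

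I then substitute the two counts. Proposition~\ref{prop:double-nonzero-x} gives $|S_2'|=q^2-q$. This holds because, for $\al\neq0$, the pair $(\al,\g)$ determines $\be$ and then $\ka$ uniquely. Proposition~\ref{prop:double-zero-x} gives $|S_1'|=qN_0'$, since $\ka$ is free once $(\be,\g)$ satisfies the reduced system at $X=0$. Adding these gives
$$
|S_{d_1,d_2}(\Fq)|=qN_0'+q(q-1)=q\bigl(q-1+N_0'\bigr),
$$
which is the claimed formula. This is exactly parallel to Remark~\ref{dan} for the single Danielewski surface.

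The only step that needs care is the bijection in Proposition~\ref{prop:double-nonzero-x}. The second equation must be evaluated at the already-determined value $\be=\al^{-d_1}P(\al,\g)$. Then $\ka=\al^{-d_2}Q(\al,\be,\g)$ lies in $\Fq$ and is unique, because $\al$ is a unit in $\Fq$. Both propositions are stated earlier, so this is the whole proof.
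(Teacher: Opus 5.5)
Your proposal is correct and follows the paper's proof: it also writes $|S_{d_1,d_2}(\Fq)|=|S_1'|+|S_2'|$ over the disjoint decomposition by whether $\al=0$, and substitutes $|S_1'|=qN_0'$ and $|S_2'|=q^2-q$ from Propositions~\ref{prop:double-zero-x} and~\ref{prop:double-nonzero-x}.
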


\begin{proof}
By Propositions~\ref{prop:double-nonzero-x} and~\ref{prop:double-zero-x},
$$
|S_{d_1,d_2}(\Fq)|
=
|S_1'|+|S_2'|
=
qN_0'+q(q-1)
=
q\bigl(q-1+N_0'\bigr).
$$
\end{proof}

\begin{remark}
The formula
$$
|S_{d_1,d_2}(\Fq)|=q(q-1+N_0')
$$
shows that whenever $N_0'$ is of the form $aq+b$, for constants $a$ and $b$ in $\Fq$, the point count is a quadratic polynomial in $q$. Since polygonal numbers are also given by quadratic polynomials in their index, this gives a natural connection between these point counts and polygonal-number sequences. We return to this connection in Section~\ref{S7}.
\end{remark}
\medskip
\noindent
Corollary~\ref{cor:double-total-count} shows that it suffices to compute $N_0'$ to determine the number of $\Fq$-rational points on $S_{d_1,d_2}$. We first express $N_0'$ in terms of the roots of $P(0,Z)$.

\begin{prop}\label{thm:rootwise-reduction}
Let $\g_1,\dots,\g_k\in\Fq$ be the distinct roots of $P(0,Z)$. For each $i=1,\dots,k$, let
$$
j_i
=
\#\{\be\in\Fq\mid Q(0,\be,\g_i)=0\}.
$$
Then
$$
N_0'=j_1+\cdots+j_k.
$$
\end{prop}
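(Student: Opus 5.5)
The plan is to partition the solution set defining $N_0'$ according to the value of the second coordinate $\g$, which is the natural fibration since the first equation $P(0,\g)=0$ involves only $\g$. Write
$$
\mathcal{N}:=\{(\be,\g)\in\Fq^2\mid P(0,\g)=0,\ Q(0,\be,\g)=0\},
$$
so that $N_0'=|\mathcal{N}|$ by definition. If $(\be,\g)\in\mathcal{N}$, then $\g$ is a root of $P(0,Z)$ lying in $\Fq$. Hence $\g=\g_i$ for exactly one index $i\in\{1,\dots,k\}$, because the $\g_i$ are assumed distinct and to exhaust the roots of $P(0,Z)$ in $\Fq$.

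This gives a disjoint decomposition
$$
\mathcal{N}=\bigsqcup_{i=1}^{k}\mathcal{N}_i,\qquad
\mathcal{N}_i:=\{(\be,\g_i)\mid \be\in\Fq,\ Q(0,\be,\g_i)=0\}.
$$
The sets $\mathcal{N}_i$ are pairwise disjoint because their second coordinates differ. Their union is all of $\mathcal{N}$ by the observation above. Conversely, each $\mathcal{N}_i$ is contained in $\mathcal{N}$, since $P(0,\g_i)=0$ holds automatically.

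Next, the projection $(\be,\g_i)\mapsto\be$ is a bijection from $\mathcal{N}_i$ onto $\{\be\in\Fq\mid Q(0,\be,\g_i)=0\}$. Therefore $|\mathcal{N}_i|=j_i$, and summing over the disjoint pieces yields $N_0'=j_1+\cdots+j_k$.

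There is no real obstacle. The only points needing care are that the roots $\g_i$ are \emph{distinct}, so no fibre is counted twice, and that only roots in $\Fq$ are listed, not roots in $\overline{\Fq}$, so the decomposition is exhaustive. The degenerate case $k=0$ is also covered: the sum is then empty and $N_0'=0$, which agrees with $\mathcal{N}=\emptyset$.
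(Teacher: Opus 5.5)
Your proof is correct and follows the same route as the paper, which simply notes that each root $\g_i$ of $P(0,Z)$ in $\Fq$ contributes exactly $j_i$ values of $\be$ and sums over $i$. You make explicit what the paper leaves implicit: the disjoint decomposition of the solution set into fibres over the distinct roots, and the bijection of each fibre with $\{\be\in\Fq\mid Q(0,\be,\g_i)=0\}$.
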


\begin{proof}
For each root $\g_i$ of $P(0,Z)$, there are exactly $j_i$ elements $\be\in\Fq$ such that
$Q(0,\be,\g_i)=0$. Hence,
$$
N_0'=\sum_{i=1}^k j_i.
$$
\end{proof}

\medskip
\noindent
Thus, the computation of $N_0'$ reduces to determining, for each root $\g$ of $P(0,Z)$, the number of roots of $Q(0,Y,\g)$ in $\Fq$.

\subsection{General methods for computing \texorpdfstring{$\bm{N_0'}$}{N0'}}

\noindent
We now give general methods for computing $N_0'$. These will be applied in the subsequent subsections to different forms of the reduced polynomial $Q(0,Y,\g)$.

\medskip
\noindent
We first use a gcd-based root count. Since
$$
Y^q-Y=\prod_{\be\in\Fq}(Y-\be),
$$
for every $f(Y)\in\Fq[Y]$, the number of distinct roots of $f(Y)$ in $\Fq$ is
$$
\deg\bigl(\gcd(f(Y),Y^q-Y)\bigr).
$$

\begin{prop}\label{prop:gcd-method-double}
Let $ P(X,Z)=XP_1(X,Z)+P_2(Z)$ and $Q(X,Y,Z)=XQ_1(X,Y,Z)+Q_2(Y,Z),$ where $Q_2(Y,Z)\in\Fq[Y,Z]$ is monic in $Y$. 
Then
$$
N_0' =
\sum_{\substack{\g\in\Fq\\ P_2(\g)=0}}
\deg(\gcd(Q_2(Y,\g),Y^q-Y)).
$$
\end{prop}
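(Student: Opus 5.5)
The plan is to reduce the statement to Proposition~\ref{thm:rootwise-reduction} combined with the gcd root count recalled just before the statement. First I will specialize the decompositions at $X=0$. From $P(X,Z)=XP_1(X,Z)+P_2(Z)$ we get $P(0,Z)=P_2(Z)$. From $Q(X,Y,Z)=XQ_1(X,Y,Z)+Q_2(Y,Z)$ we get $Q(0,Y,Z)=Q_2(Y,Z)$. Substituting into the definition of $N_0'$ rewrites it as
$$
N_0'=\#\{(\be,\g)\in\Fq^2\mid P_2(\g)=0,\ Q_2(\be,\g)=0\}.
$$

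Next I will fiber this set over the $\g$-coordinate. The distinct roots $\g_1,\dots,\g_k\in\Fq$ of $P(0,Z)$ are exactly the $\g\in\Fq$ with $P_2(\g)=0$. Proposition~\ref{thm:rootwise-reduction} then gives
$$
N_0'=\sum_{\substack{\g\in\Fq\\ P_2(\g)=0}}\#\{\be\in\Fq\mid Q_2(\be,\g)=0\}.
$$

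For each fixed root $\g$, I will apply the gcd count to $f(Y)=Q_2(Y,\g)\in\Fq[Y]$. Because $Y^q-Y=\prod_{\be\in\Fq}(Y-\be)$ is squarefree and splits over $\Fq$, the polynomial $\gcd(f(Y),Y^q-Y)$ is the product of the linear factors $Y-\be$ with $f(\be)=0$. Its degree is therefore the number of distinct roots of $f$ in $\Fq$. This is the same argument as in Proposition~\ref{gcdargument}, and substituting it into each summand gives the stated formula.

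The one point needing care is that the gcd must be taken of a nonzero polynomial, since $\gcd(0,Y^q-Y)=Y^q-Y$ would still correctly give $q$ but should be justified. The monicity hypothesis settles this: $Q_2(Y,Z)$ is monic in $Y$, so $Q_2(Y,\g)$ is monic of the same $Y$-degree for every $\g$, and in particular nonzero. No other step presents a real obstacle.
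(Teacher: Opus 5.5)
Your proposal is correct and follows essentially the same route as the paper: identify $P(0,Z)=P_2(Z)$ and $Q(0,Y,Z)=Q_2(Y,Z)$, count the roots in each fiber over a root $\gamma$ by $\deg\gcd(Q_2(Y,\gamma),Y^q-Y)$, and sum using Proposition~\ref{thm:rootwise-reduction}. Your remark that monicity keeps $Q_2(Y,\gamma)$ nonzero is an extra detail the paper omits, and as you note, the count would remain correct even without it.
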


\begin{proof}
For each $\g\in\Fq$ satisfying $P(0,\g)=0$, the number of $\be\in\Fq$ such that $Q(0,\be,\g)=0$ is exactly
$$
\deg(\gcd(Q(0,Y,\g),Y^q-Y)).
$$
The result now follows from Theorem~\ref{thm:rootwise-reduction}.
\end{proof}

\noindent
As an immediate consequence of Proposition~\ref{prop:gcd-method-double} and Corollary~\ref{cor:double-total-count}, we obtain:

\begin{corollary}\label{cor:no-root-P-zero}
If $P(0,Z)$ has no root in $\Fq$, then
$
N_0'=0
~\text{and}~
|S_{d_1,d_2}(\Fq)|=q^2-q.
$
\end{corollary}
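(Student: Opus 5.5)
The plan is to argue directly from the definition of $N_0'$ and then substitute into Corollary~\ref{cor:double-total-count}. The corollary concerns the set
$$
\{(\be,\g)\in\Fq^2\mid P(0,\g)=0,\ Q(0,\be,\g)=0\}.
$$
The first defining condition already forces $\g$ to be a root of $P(0,Z)$ lying in $\Fq$. If $P(0,Z)$ has no such root, this condition fails for every $\g\in\Fq$, and the set is empty. Hence $N_0'=0$.

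Equivalently, one can use the sum in Proposition~\ref{prop:gcd-method-double}. Writing $P(X,Z)=XP_1(X,Z)+P_2(Z)$, we have $P_2(Z)=P(0,Z)$. The index set of the sum
$$
\sum_{\substack{\g\in\Fq\\ P_2(\g)=0}}\deg\bigl(\gcd(Q_2(Y,\g),Y^q-Y)\bigr)
$$
is therefore empty, so the sum is an empty sum and equals $0$. The same conclusion follows from Proposition~\ref{thm:rootwise-reduction} with $k=0$.

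Finally, I substitute $N_0'=0$ into Corollary~\ref{cor:double-total-count}. This gives
$$
|S_{d_1,d_2}(\Fq)|=q(q-1+0)=q^2-q.
$$
These are exactly the points with $X\neq 0$ counted in Proposition~\ref{prop:double-nonzero-x}.

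There is no real obstacle here. The only point needing a little care is that the decomposition $P=XP_1+P_2$ must satisfy $P_2(Z)=P(0,Z)$. This holds because setting $X=0$ kills the term $XP_1(X,Z)$, and it is what lets the hypothesis match the index set of the sum.
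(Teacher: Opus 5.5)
Your proof is correct and follows the paper's approach: the paper calls this an immediate consequence of Proposition~\ref{prop:gcd-method-double}, whose sum has an empty index set, combined with Corollary~\ref{cor:double-total-count}. Your direct observation that the set defining $N_0'$ is empty is an equally valid, slightly more elementary way to get $N_0'=0$.
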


\medskip
\noindent
A different way to compute $N_0'$ is to use resultants to eliminate the variable $Y$ from the equation $Q(0,Y,Z)=0$.

\begin{defi}
Let $K$ be a field and $f(X,Y)$ and $g(X,Y)$ be polynomials in $K[X,Y]$, regarded as elements of $(K[Y])[X].$ Write
$$
f(X,Y)=a_0(Y)X^l+a_1(Y)X^{l-1}+\cdots+a_l(Y),
~~
a_0(Y)\neq 0,~~l>0
$$
and
$$
g(X,Y)=b_0(Y)X^m+b_1(Y)X^{m-1}+\cdots+b_m(Y),
~~
b_0(Y)\neq 0,~~m>0.
$$
The resultant of $f$ and $g$ with respect to $X$, denoted by $\operatorname{Res}_X(f,g)$, is the determinant of the $(l+m)\times(l+m)$ Sylvester matrix associated with $f$ and $g$.
\end{defi}

\begin{lemma}[\cite{CoxLittleOShea2005}, Equation (1.4)]
\label{lem:resultant-product}
Let
$$
f(X)=a_0X^l+a_1X^{l-1}+\cdots+a_l,
\qquad
g(X)=b_0X^m+b_1X^{m-1}+\cdots+b_m,
$$
with $a_0b_0\neq0$ and $l,m>0$. If the roots of $f$ are
$\xi_1,\dots,\xi_l$ and the roots of $g$ are $\eta_1,\dots,\eta_m$,
in an extension field, then
$$
\operatorname{Res}(f,g)
=
a_0^{\,m}b_0^{\,l}
\prod_{i=1}^l\prod_{j=1}^m(\xi_i-\eta_j)
=
a_0^{\,m}\prod_{i=1}^l g(\xi_i)
=
(-1)^{lm}b_0^{\,l}\prod_{j=1}^m f(\eta_j).
$$
\end{lemma}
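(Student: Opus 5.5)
The statement is the classical product formula for the resultant (Lemma~\ref{lem:resultant-product}). I would prove it first for generic coefficients and then specialize. Regard $a_0,\xi_1,\dots,\xi_l,b_0,\eta_1,\dots,\eta_m$ as independent indeterminates. Put $f=a_0\prod_i(X-\xi_i)$ and $g=b_0\prod_j(X-\eta_j)$, so that each coefficient $a_k$ is $a_0$ times $(-1)^k$ times the $k$-th elementary symmetric function of the $\xi_i$, and similarly for the $b_k$. Let $D$ be the Sylvester determinant, a polynomial in these indeterminates, and let $R$ be the right-hand product $a_0^{m}b_0^{l}\prod_{i,j}(\xi_i-\eta_j)$. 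The plan is to show $D=R$ as polynomials. The identity over any field, and for any roots in an extension field, then follows by specialization, since both sides are compatible with ring homomorphisms.

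\emph{Step 1: $\xi_i-\eta_j$ divides $D$.} The Sylvester determinant vanishes exactly when there are polynomials $A,B$, not both zero, with $\deg A<m$, $\deg B<l$ and $Af+Bg=0$. This is the standard linear-algebra fact: the rows of the Sylvester matrix are the coefficient vectors of $X^kf$ and $X^kg$. When $\xi_i=\eta_j$, the polynomials $f$ and $g$ share the factor $X-\xi_i$. Taking $A=g/(X-\xi_i)$ and $B=-f/(X-\xi_i)$ gives such a relation, so $D$ vanishes under the substitution $\xi_i=\eta_j$. Since $D$ is a polynomial and $\xi_i-\eta_j$ is irreducible, this gives the divisibility. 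The factors for distinct pairs $(i,j)$ are pairwise non-associate irreducibles, so their product $\prod_{i,j}(\xi_i-\eta_j)$ divides $D$.

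\emph{Step 2: degree comparison.} Each entry of the first $m$ rows is divisible by $a_0$, and each entry of the last $l$ rows by $b_0$. Hence $D=a_0^mb_0^l\widetilde D$, where $\widetilde D$ is the Sylvester determinant built from the monic polynomials. That determinant is homogeneous of total degree $lm$ in the roots, because the coefficient in position $k$ has degree $k$; a weight-counting argument on the permutation expansion establishes this. So $\widetilde D=c\prod(\xi_i-\eta_j)$ for some constant $c$.

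\emph{Step 3: fixing the constant.} Set all $\eta_j=0$, so that $g=b_0X^m$ in the generic setup. The Sylvester matrix then becomes block triangular, and its determinant is $a_0^mb_0^l(\prod_i\xi_i)^m$ after accounting for the sign $(-1)^{lm}$ coming from $a_l^m$ with $a_l=a_0(-1)^l\prod\xi_i$. Comparing with $R$ at $\eta=0$ gives $c=1$.

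\emph{The remaining equalities.} These are bookkeeping. We have $g(\xi_i)=b_0\prod_j(\xi_i-\eta_j)$, which gives the second form. We also have $f(\eta_j)=a_0\prod_i(\eta_j-\xi_i)$, and reversing the $lm$ differences contributes the factor $(-1)^{lm}$, which gives the third form.

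I expect the main obstacle to be Step 3's sign and normalization bookkeeping, together with justifying the homogeneity degree in Step 2. Alternatively, one can simply cite \cite{CoxLittleOShea2005}, as the paper does.
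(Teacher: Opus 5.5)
The paper gives no proof of this lemma. It is quoted from \cite{CoxLittleOShea2005} and used to turn $\operatorname{Res}_Y(Y^q-Y,\cdot)$ into a product of values in Theorem~\ref{thm:resultant-method-double} and Corollary~\ref{cor:resultant-QY}. Your argument is the classical generic-roots proof: vanishing along $\xi_i=\eta_j$, a weight count of $lm$, and normalization at $\eta=0$. It is correct up to the bookkeeping points below. Compared with the bare citation, it makes the lemma self-contained. It also shows the formula is a universal identity in $\mathbb{Z}[a_0,b_0,\xi,\eta]$, so it holds in every characteristic and over every field, including $\Fq(Z)$ where the paper applies it. For this, take the base ring of your indeterminates to be $\mathbb{Z}$ (or the prime field of the given field) rather than $\mathbb{Q}$, so that specialization to characteristic $p$ is legitimate.

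Three places need tightening.

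\emph{Step~1.} The divisibility comes from the factor theorem, not from irreducibility. View $D$ as a polynomial in $\xi_i$ over the remaining variables and divide by the monic $\xi_i-\eta_j$. The remainder is free of $\xi_i$, and it equals $D|_{\xi_i=\eta_j}=0$.

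\emph{Step~2.} You need $\prod_{i,j}(\xi_i-\eta_j)$ to divide $\widetilde D$, not only $D=a_0^mb_0^l\widetilde D$. This holds because $a_0$ and $b_0$ are primes dividing no $\xi_i-\eta_j$; alternatively, rerun Step~1 for the monic pair.

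\emph{Step~3.} Your value $a_0^mb_0^l(\prod_i\xi_i)^m$ is right, but the sign accounting you describe is incomplete. Use the standard Sylvester matrix: rows $X^{m-1}f,\dots,f,X^{l-1}g,\dots,g$ in descending powers of $X$, which is the convention for which the stated formula holds. With $g=b_0X^m$, the $l$ rows coming from $g$ are $b_0$ times the first $l$ standard basis vectors, so the matrix is block \emph{anti}-triangular, not block triangular. Moving these rows above the $m$ rows of $f$ costs $(-1)^{lm}$. It leaves $b_0I_l$ and a lower triangular $m\times m$ block with diagonal entries $a_l$. Hence $D|_{\eta=0}=(-1)^{lm}b_0^la_l^m$, and this sign cancels the one in $a_l^m=(-1)^{lm}a_0^m(\prod_i\xi_i)^m$. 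If you track only the sign inside $a_l^m$, you are off by $(-1)^{lm}$ when $lm$ is odd. With the ascending-power convention the matrix really is block triangular, but then the whole resultant differs from the stated formula by that same $(-1)^{lm}$.

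The remaining two equalities, via $g(\xi_i)=b_0\prod_j(\xi_i-\eta_j)$ and $f(\eta_j)=a_0\prod_i(\eta_j-\xi_i)$, are fine as you wrote them.
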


\begin{theorem}\label{thm:resultant-method-double}
Let
$
R_Y(Z)
=
\operatorname{Res}_Y\bigl(Y^q-Y,Q(0,Y,Z)\bigr)
\in\Fq[Z],
$
and define
$$
A
=
\{\g\in\Fq\mid P(0,\g)=0,\ R_Y(\g)=0\}.
$$
For each $\g\in A$, let
$$
m_\g
=
\#\{\be\in\Fq\mid Q(0,\be,\g)=0\}.
$$
Then
$$
N_0'=\sum_{\g\in A}m_\g.
$$
\end{theorem}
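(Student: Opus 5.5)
The plan is to reduce the statement to Proposition~\ref{thm:rootwise-reduction}. The one substantive point is that, for $\g\in\Fq$, we have $R_Y(\g)=0$ exactly when $Q(0,Y,\g)$ has a root in $\Fq$. Then the roots $\g$ of $P(0,Z)$ lying outside $A$ contribute $m_\g=0$, and the sum over all roots of $P(0,Z)$ collapses to the sum over $A$.

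The first step is to justify specializing the resultant at $Z=\g$. Since $Q$ is monic in $Y$, so is $Q(0,Y,Z)$ as a polynomial in $Y$ over $\Fq[Z]$, with some degree $m\ge 2$. The polynomial $Y^q-Y$ is monic of degree $q$ and does not involve $Z$. Both leading coefficients are therefore the constant $1$. Because the Sylvester matrix is built entrywise from coefficients, and the degrees do not drop under $Z\mapsto\g$, we get
\[
R_Y(\g)=\operatorname{Res}_Y\bigl(Y^q-Y,\,Q(0,Y,\g)\bigr).
\]
This is the step I expect to need the most care, since specialization of resultants fails when leading coefficients vanish. Monicity in $Y$ is exactly the hypothesis that rules this out.

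Next, apply Lemma~\ref{lem:resultant-product} with $f=Y^q-Y$, whose roots are the $q$ elements of $\Fq$ (all simple), and $a_0=1$. This gives
\[
R_Y(\g)=\prod_{\be\in\Fq}Q(0,\be,\g).
\]
Hence $R_Y(\g)=0$ if and only if $Q(0,\be,\g)=0$ for some $\be\in\Fq$, that is, if and only if $m_\g\ge1$.

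Finally, let $\g_1,\dots,\g_k$ be the distinct roots of $P(0,Z)$ in $\Fq$. By Proposition~\ref{thm:rootwise-reduction}, $N_0'=\sum_i m_{\g_i}$. The previous step shows that $\{\g_1,\dots,\g_k\}$ splits as $A$ together with the roots having $m_{\g_i}=0$. Discarding the zero terms gives $N_0'=\sum_{\g\in A}m_\g$.
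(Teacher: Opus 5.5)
Your proof is correct and follows the paper's argument. The paper also applies Lemma~\ref{lem:resultant-product} with $Y^q-Y=\prod_{\be\in\Fq}(Y-\be)$ to get $R_Y(Z)=\prod_{\be\in\Fq}Q(0,\be,Z)$, and then notes that roots of $P(0,Z)$ outside $A$ contribute nothing. The only difference is that the paper treats this product formula as an identity in $\Fq[Z]$ and then evaluates at $\g$, so your separate specialization step (correctly justified by monicity) is not strictly needed.
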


\begin{proof}
Since
$$
Y^q-Y=\prod_{\be\in\Fq}(Y-\be),
$$
applying Lemma~\ref{lem:resultant-product} gives
$$
R_Y(Z)
=
\prod_{\be\in\Fq}Q(0,\be,Z).
$$
Hence, if $Q(0,\be,\g)=0$, then $R_Y(\g)=0$. Therefore, every pair
$(\be,\g)$ counted by $N_0'$ has $\g\in A$. For each $\g\in A$, there are
exactly $m_\g$ elements $\be\in\Fq$ such that $Q(0,\be,\g)=0$. Hence
$$
N_0'
=
\sum_{\g\in A}m_\g.
$$
\end{proof}

\begin{corollary}\label{cor:resultant-QY}
Let $Q(0,Y,Z)=Q_1(Y)\in\Fq[Y]$. Then
$\operatorname{Res}_Y(Y^q-Y,Q_1(Y))\in\Fq^\times$
if and only if $Q_1(Y)$ has no root in $\Fq$.
If these equivalent conditions hold, then $N_0'=0$.
\end{corollary}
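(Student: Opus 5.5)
The plan is to apply Lemma~\ref{lem:resultant-product} to the pair $f(Y)=Y^q-Y$ and $g(Y)=Q_1(Y)$, regarded as polynomials in $Y$ over $\Fq$. The first step is to confirm that the lemma applies. Since $Q(X,Y,Z)$ is monic in $Y$ with $\deg_Y Q\ge 2$, the polynomial $Q_1(Y)=Q(0,Y,Z)$ is monic of degree $m\ge 2$. So its leading coefficient is $b_0=1$. The polynomial $Y^q-Y$ is monic of degree $l=q$, and its roots are exactly the elements $\be\in\Fq$, each of multiplicity one.

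The lemma's formula $a_0^{m}\prod_i g(\xi_i)$ then gives
$$
\operatorname{Res}_Y\bigl(Y^q-Y,Q_1(Y)\bigr)=\prod_{\be\in\Fq}Q_1(\be).
$$
Each factor lies in $\Fq$, so the product lies in $\Fq$. A product of field elements is nonzero if and only if every factor is nonzero. Hence the resultant lies in $\Fq^\times$ if and only if $Q_1(\be)\neq0$ for all $\be\in\Fq$, that is, if and only if $Q_1$ has no root in $\Fq$. This proves the stated equivalence.

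For the consequence, assume $Q_1$ has no root in $\Fq$. Since $Q(0,Y,Z)=Q_1(Y)$ does not depend on $Z$, we have $Q(0,\be,\g)=Q_1(\be)\neq0$ for every pair $(\be,\g)\in\Fq^2$. So no pair satisfies the reduced system, and $N_0'=0$ directly from the definition.

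The same conclusion also follows from Theorem~\ref{thm:resultant-method-double}. Here $R_Y(Z)$ is a nonzero constant, so the set $A$ is empty and the sum defining $N_0'$ is empty. Equivalently, each $j_i$ in Proposition~\ref{thm:rootwise-reduction} is $0$.

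There is no real obstacle. The only point needing care is the sign and leading-coefficient convention in Lemma~\ref{lem:resultant-product}. This is harmless: both leading coefficients are $1$, and any sign $(-1)^{lm}$ would not affect whether the resultant is zero.
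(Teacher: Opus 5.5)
Your proof is correct and follows the paper's own argument. You apply Lemma~\ref{lem:resultant-product} with the monic polynomial $Y^q-Y$ to get $\operatorname{Res}_Y(Y^q-Y,Q_1(Y))=\prod_{\be\in\Fq}Q_1(\be)$, read off the equivalence, and conclude $N_0'=0$ because the reduced system has no solution. Your extra checks — that the lemma's hypotheses hold, and the alternative route via Theorem~\ref{thm:resultant-method-double} — are harmless additions.
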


\begin{proof}
By Lemma~\ref{lem:resultant-product},
$$
\operatorname{Res}_Y(Y^q-Y,Q_1(Y))
=
\prod_{\be\in\Fq}Q_1(\be).
$$
Hence, the resultant is nonzero if and only if $Q_1(\be)\neq0$ for every
$\be\in\Fq$, equivalently, if and only if $Q_1(Y)$ has no root in $\Fq$.
In this case, the system
$$
P(0,\g)=0,
\qquad
Q_1(\be)=0
$$
has no solution in $\Fq^2$, and hence $N_0'=0$.
\end{proof}

\medskip
\noindent
A further approach expresses $N_0'$ in terms of additive character sums.

\begin{theorem}\label{thm:additive-master-double}
If $P(0,Z)$ has $r$ distinct roots in $\Fq$, then
\begin{equation}\label{eq:N0-master-double}
N_0'=r+
\frac{1}{q}
\sum_{\substack{\g\in\Fq\\ P(0,\g)=0}}
\sum_{s\in\Fq^\times}
\sum_{\be\in\Fq}
\chi_1\!\bigl(s\,Q(0,\be,\g)\bigr),
\end{equation}
where $\chi_1$ denotes a fixed nontrivial additive character of $\Fq$.
\end{theorem}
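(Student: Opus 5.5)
The plan is to combine Proposition~\ref{thm:rootwise-reduction} with an orthogonality identity for the additive characters $\chi_s(a)=\chi_1(sa)$, $s\in\Fq$. By Proposition~\ref{thm:rootwise-reduction},
$$
N_0'=\sum_{\substack{\g\in\Fq\\ P(0,\g)=0}} j(\g),
\qquad
j(\g)=\#\{\be\in\Fq\mid Q(0,\be,\g)=0\}.
$$
So it suffices to produce, for each fixed root $\g$ of $P(0,Z)$, a character-sum expression for $j(\g)$, and then sum over the $r$ roots.

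For fixed $\g$, apply Theorem~\ref{5.6} with $f(Y)=Q(0,Y,\g)\in\Fq[Y]$ and $a=0$. Since $\overline{\chi(0)}=1$, this gives
$$
j(\g)=1+\frac{1}{q}\sum_{\chi\neq\chi_0}\sum_{\be\in\Fq}\chi\bigl(Q(0,\be,\g)\bigr).
$$
Next, reparametrize the nontrivial additive characters. The statement fixes some nontrivial $\chi_1$, not necessarily the canonical one, so I cannot quote \cite{LM13}, Theorem~5.7 verbatim. Instead I argue directly that $s\mapsto\chi_s$ is an injective homomorphism $\Fq\to\widehat{\Fq}$. If $\chi_s=\chi_{s'}$ with $s\ne s'$, then $\chi_1((s-s')a)=1$ for all $a$. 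Since multiplication by $s-s'$ is a bijection of $\Fq$, this forces $\chi_1$ to be trivial, a contradiction. By Theorem~\ref{5.5} there are exactly $q$ additive characters, so the map is a bijection. The trivial character corresponds to $s=0$, hence the nontrivial characters are exactly $\chi_s$ with $s\in\Fq^\times$. Substituting,
$$
j(\g)=1+\frac{1}{q}\sum_{s\in\Fq^\times}\sum_{\be\in\Fq}\chi_1\bigl(s\,Q(0,\be,\g)\bigr).
$$

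Summing this identity over the $r$ distinct roots $\g$ of $P(0,Z)$ in $\Fq$, the constant terms contribute $r$, and the remaining terms give exactly \eqref{eq:N0-master-double}.

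The only point needing care is this reparametrization of the nontrivial characters by $s\in\Fq^\times$ for an arbitrary fixed nontrivial $\chi_1$. It is settled by the injectivity-plus-counting argument above. Everything else is a direct substitution into Theorem~\ref{5.6} and Proposition~\ref{thm:rootwise-reduction}.
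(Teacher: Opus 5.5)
Your proof is correct and follows essentially the same route as the paper: you reduce rootwise via Proposition~\ref{thm:rootwise-reduction}, apply additive-character orthogonality for each root $\g$, and let the $s=0$ (trivial character) term contribute $r$. The only difference is that you pass through Theorem~\ref{5.6} and then reparametrize the nontrivial characters as $a\mapsto\chi_1(sa)$, $s\in\Fq^\times$, whereas the paper uses $\frac{1}{q}\sum_{s\in\Fq}\chi_1(sa)=1$ or $0$ according as $a=0$ or $a\neq0$ directly; for $a\neq0$ this follows from $\sum_{t\in\Fq}\chi_1(t)=0$ via the substitution $t=sa$, so your (correct) injectivity-plus-counting step is not needed.
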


\begin{proof}
Let $\g_1,\dots,\g_r$ be the distinct roots of $P(0,Z)$. By Theorem~\ref{thm:rootwise-reduction},
$$
N_0'
=
\sum_{\substack{\g\in\Fq\\ P(0,\g)=0}}
\#\{\be\in\Fq\mid Q(0,\be,\g)=0\}.
$$
Fix such a root $\g_i$. By the orthogonality relation for additive characters,
$$
\frac{1}{q}\sum_{s\in\Fq}\chi_1(sa)
=
\begin{cases}
1, & \text{if } a=0,\\
0, & \text{if } a\neq 0,
\end{cases}
\qquad a\in\Fq.
$$
Applying this with $a=Q(0,\be,\g_i)$, we obtain
$$
\#\{\be\in\Fq\mid Q(0,\be,\g_i)=0\}
=
\frac{1}{q}\sum_{\be\in\Fq}\sum_{s\in\Fq}\chi_1\!\bigl(s\,Q(0,\be,\g_i)\bigr).
$$
Summing over all roots $\g_i$ of $P(0,Z)$, we get
$$
N_0'
=
\frac{1}{q}
\sum_{s\in\Fq}
\sum_{\substack{\g\in\Fq\\ P(0,\g)=0}}
\sum_{\be\in\Fq}
\chi_1\!\bigl(s\,Q(0,\be,\g)\bigr).
$$
The term $s=0$ contributes
$$
\frac{1}{q}\,rq=r.
$$
Separating this term gives \eqref{eq:N0-master-double}.
\end{proof}

\begin{remark}
\noindent
One can observe using character sums that if $P(0,Z)$ has $r$ distinct roots in $\Fq$ and
$Q(0,\be,\g)\neq0$ for every $\be\in\Fq$ and every root $\g$ of $P(0,Z)$, then
$$
\sum_{s\in\Fq^\times}\chi_1\!\bigl(s\,Q(0,\be,\g)\bigr)=-1.
$$
Therefore
$$
\sum_{\substack{\g\in\Fq\\ P(0,\g)=0}}
\sum_{s\in\Fq^\times}
\sum_{\be\in\Fq}
\chi_1\!\bigl(s\,Q(0,\be,\g)\bigr)
=
-rq.
$$
Substituting this into \eqref{eq:N0-master-double}, we obtain
$$
N_0'=r+\frac{1}{q}(-rq)=0.
$$
\end{remark}

\begin{corollary}
Let $P(0,Z)$ have $r$ distinct roots in $\Fq$. If, in Theorem~\ref{thm:additive-master-double}, we take $\chi_1$ to be the canonical additive character of $\Fq$, then
$$
N_0'
=
r+\frac{1}{q}
\sum_{\substack{\g\in\Fq\\ P(0,\g)=0}}
\sum_{s\in\Fq^\times}
\sum_{\be\in\Fq}
\zeta_p^{\,\Tr(s\,Q(0,\be,\g))},
$$
where $\zeta_p=e^{2\pi i/p}$. In particular, if $q=2^n$, then
$$
N_0'
=
r+\frac{1}{2^n}
\sum_{\substack{\g\in\Fq\\ P(0,\g)=0}}
\sum_{s\in\Fq^\times}
\sum_{\be\in\Fq}
(-1)^{\Tr(s\,Q(0,\be,\g))}.
$$
\end{corollary}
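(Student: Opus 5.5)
The corollary is Theorem~\ref{thm:additive-master-double} with the specific choice $\chi_1(a)=e^{2\pi i\,\Tr(a)/p}$ from \eqref{eq:canonical-additive}, so the plan is to specialize that theorem and rewrite the summands.

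The first step is to check that the canonical additive character is nontrivial, so that Theorem~\ref{thm:additive-master-double} applies. The absolute trace $\Tr:\Fq\to\FF_p$ is $\FF_p$-linear and given by the polynomial $a+a^p+\cdots+a^{p^{r-1}}$ of degree $p^{r-1}<q$. Hence it cannot vanish on all of $\Fq$, so $\Tr$ is surjective onto $\FF_p$. Then some $a$ has $\Tr(a)=1$, giving $\chi_1(a)=e^{2\pi i/p}\neq 1$, so $\chi_1$ is nontrivial.

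Next, substitute into \eqref{eq:N0-master-double}. With $\zeta_p=e^{2\pi i/p}$ we have $\chi_1(s\,Q(0,\be,\g))=\zeta_p^{\Tr(s\,Q(0,\be,\g))}$, where the exponent is read as an integer representative in $\{0,\dots,p-1\}$. This is well defined because $\zeta_p^p=1$. Inserting this term by term gives the first displayed formula, with the outer sum still running over the $r$ distinct roots of $P(0,Z)$ exactly as in the theorem.

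For $q=2^n$, take $p=2$. Then $\zeta_2=e^{\pi i}=-1$, and the exponent $\Tr(\cdot)\in\FF_2=\{0,1\}$, so each summand becomes $(-1)^{\Tr(s\,Q(0,\be,\g))}$ and the prefactor $1/q$ becomes $1/2^n$.

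There is no real obstacle here. The only points needing care are the nontriviality of $\chi_1$ and the convention for interpreting a trace value in $\FF_p$ as an exponent.
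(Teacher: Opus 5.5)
Your proposal is correct and is essentially the paper's argument. The paper gives no separate proof: the corollary is the direct substitution $\chi_1(a)=\zeta_p^{\Tr(a)}$ into Theorem~\ref{thm:additive-master-double}, followed by $\zeta_2=-1$ when $p=2$, and your check that $\chi_1$ is nontrivial (via the degree bound $p^{r-1}<q$ on the trace polynomial and $\FF_p$-linearity) supplies the one hypothesis the paper leaves implicit. The only cosmetic issue is that your exponent $r$ in $q=p^r$ clashes with the root count $r$ in the statement, so rename one of them.
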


\medskip
\noindent
We also obtain a character-sum formula that treats the two equations
$P(0,Z)=0$ and $Q(0,Y,Z)=0$ simultaneously.

\begin{theorem}\label{thm:N0-double-char}
For additive characters $\chi$ and $\phi$ of $\Fq$, define
$$
S_P(\chi)
=
\sum_{\g\in\Fq}\chi\bigl(P(0,\g)\bigr),
$$
$$
S_Q(\phi)
=
\sum_{\be,\g\in\Fq}\phi\bigl(Q(0,\be,\g)\bigr),
$$
and
$$
S_{P,Q}(\chi,\phi)
=
\sum_{\be,\g\in\Fq}
\chi\bigl(P(0,\g)\bigr)\phi\bigl(Q(0,\be,\g)\bigr).
$$
Then
$$
N_0'
=
1
+
\frac{1}{q}\sum_{\chi\neq\chi_0}S_P(\chi)
+
\frac{1}{q^2}\sum_{\phi\neq\phi_0}S_Q(\phi)
+\frac{1}{q^2}\sum_{\substack{\chi\neq\chi_0\\ \phi\neq\phi_0}}S_{P,Q}(\chi,\phi),
$$
where $\chi_0$ and $\phi_0$ denote the trivial additive characters.
\end{theorem}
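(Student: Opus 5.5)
Write $N_0'$ as a double sum of indicator functions over $(\be,\g)\in\Fq^2$ and expand each indicator by additive-character orthogonality, exactly as in the proof of Theorem~\ref{5.6}. For every $a\in\Fq$,
$$
\frac{1}{q}\sum_{\chi\in\widehat{\Fq}}\chi(a)=
\begin{cases}1,& a=0,\\ 0,& a\neq0.\end{cases}
$$
Hence the indicator of the condition $P(0,\g)=0$ and $Q(0,\be,\g)=0$ is the product
$$
\Bigl(\frac1q\sum_{\chi}\chi\bigl(P(0,\g)\bigr)\Bigr)\Bigl(\frac1q\sum_{\phi}\phi\bigl(Q(0,\be,\g)\bigr)\Bigr).
$$
Summing this over all $(\be,\g)\in\Fq^2$ gives
$$
N_0'=\frac{1}{q^2}\sum_{\chi,\phi}\ \sum_{\be,\g\in\Fq}\chi\bigl(P(0,\g)\bigr)\phi\bigl(Q(0,\be,\g)\bigr)=\frac{1}{q^2}\sum_{\chi,\phi}S_{P,Q}(\chi,\phi).
$$

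Next I split the double sum over $(\chi,\phi)$ into four classes according to whether each character is trivial.

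\begin{enumerate}
\item If $\chi=\chi_0$ and $\phi=\phi_0$, the inner sum is $q^2$, which contributes $1$.
\item If $\chi\neq\chi_0$ and $\phi=\phi_0$, the summand does not depend on $\be$. Summing over $\be$ gives a factor $q$, so $S_{P,Q}(\chi,\phi_0)=q\,S_P(\chi)$ and the contribution is $\frac1q\sum_{\chi\neq\chi_0}S_P(\chi)$.
\item If $\chi=\chi_0$ and $\phi\neq\phi_0$, the inner sum is exactly $S_Q(\phi)$, which contributes $\frac1{q^2}\sum_{\phi\neq\phi_0}S_Q(\phi)$.
\item The remaining pairs with both characters nontrivial give the last term as stated.
\end{enumerate}

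Adding the four contributions yields the claimed identity.

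There is essentially no obstacle; the one point to handle carefully is the $\be$-independence in the second class. Here $S_P(\chi)$ is defined as a sum over $\g$ only, so the factor $q$ coming from the free variable $\be$ has to be tracked explicitly. That factor is what turns the normalization $1/q^2$ into $1/q$ for that term.
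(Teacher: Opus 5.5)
Your proposal is correct and follows the paper's proof essentially step for step. Like the paper, you expand both indicators by additive-character orthogonality, interchange the sums, and split into four cases according to whether $\chi$ and $\phi$ are trivial, tracking the factor $q$ from the free variable $\be$ in the case $\chi\neq\chi_0$, $\phi=\phi_0$.
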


\begin{proof}
Using the orthogonality relation for additive characters, we have
$$
\frac{1}{q}\sum_{\chi}\chi\bigl(P(0,\g)\bigr)
=
\begin{cases}
1, & \text{if } P(0,\g)=0,\\
0, & \text{if } P(0,\g)\neq 0,
\end{cases}
$$
\noindent
and
$$
\frac{1}{q}\sum_{\phi}\phi\bigl(Q(0,\be,\g)\bigr)
=
\begin{cases}
1, & \text{if } Q(0,\be,\g)=0,\\
0, & \text{if } Q(0,\be,\g)\neq 0,
\end{cases}
$$
where $\chi$ and $\phi$ run over all additive characters of $\Fq.$

\noindent
Therefore,
$$
N_0'
=
\sum_{\be,\g\in\Fq}
\left(
\frac{1}{q}\sum_{\chi}\chi\bigl(P(0,\g)\bigr)
\right)
\left(
\frac{1}{q}\sum_{\phi}\phi\bigl(Q(0,\be,\g)\bigr)
\right).
$$

\noindent
Hence
$$
N_0'
=
\frac{1}{q^2}
\sum_{\be,\g\in\Fq}
\sum_{\chi}\sum_{\phi}
\chi\bigl(P(0,\g)\bigr)\phi\bigl(Q(0,\be,\g)\bigr).
$$
\noindent
Interchanging the order of summation, we obtain
$$
N_0'
=
\frac{1}{q^2}
\sum_{\chi}\sum_{\phi}
\sum_{\be,\g\in\Fq}
\chi\bigl(P(0,\g)\bigr)\phi\bigl(Q(0,\be,\g)\bigr).
$$

\noindent
We now separate the sums according to whether $\chi$ and $\phi$ are trivial or nontrivial.

\medskip
\noindent
If $\chi=\chi_0$ and $\phi=\phi_0$, then
$$
\frac{1}{q^2}
\sum_{\be,\g\in\Fq}
\chi_0\bigl(P(0,\g)\bigr)\phi_0\bigl(Q(0,\be,\g)\bigr)
=
\frac{1}{q^2}\sum_{\be,\g\in\Fq}1
=
1.
$$

\noindent
If $\chi\neq\chi_0$ and $\phi=\phi_0$, then
$$
\frac{1}{q^2}
\sum_{\chi\neq\chi_0}
\sum_{\be,\g\in\Fq}
\chi\bigl(P(0,\g)\bigr)
=
\frac{1}{q^2}
\sum_{\chi\neq\chi_0}
\left(\sum_{\g\in\Fq}\chi\bigl(P(0,\g)\bigr)\right)
\left(\sum_{\be\in\Fq}1\right)
=
\frac{1}{q}\sum_{\chi\neq\chi_0}S_P(\chi).
$$

\noindent
If $\chi=\chi_0$ and $\phi\neq\phi_0$, then
$$
\frac{1}{q^2}
\sum_{\phi\neq\phi_0}
\sum_{\be,\g\in\Fq}
\phi\bigl(Q(0,\be,\g)\bigr)
=
\frac{1}{q^2}\sum_{\phi\neq\phi_0}S_Q(\phi).
$$

\noindent
Finally, if $\chi\neq\chi_0$ and $\phi\neq\phi_0,$ then
$$
\frac{1}{q^2}
\sum_{\substack{\chi\neq\chi_0\\ \phi\neq\phi_0}}
\sum_{\be,\g\in\Fq}
\chi\bigl(P(0,\g)\bigr)\phi\bigl(Q(0,\be,\g)\bigr)
=
\frac{1}{q^2}
\sum_{\substack{\chi\neq\chi_0\\ \phi\neq\phi_0}}
S_{P,Q}(\chi,\phi).
$$

\noindent
Combining these four contributions, we get
$$
N_0'
=
1
+
\frac{1}{q}\sum_{\chi\neq\chi_0}S_P(\chi)
+
\frac{1}{q^2}\sum_{\phi\neq\phi_0}S_Q(\phi)
+
\frac{1}{q^2}\sum_{\substack{\chi\neq\chi_0\\ \phi\neq\phi_0}}S_{P,Q}(\chi,\phi).
$$
\noindent
This completes the proof.
\end{proof}

\medskip
\noindent
We next obtain a general formula for $N_0'$ using the K\"onig--Rados theorem.

\begin{theorem}
\label{thm:KR-general-N0}
Let $P(0,Z)$ have $r$ distinct roots $\gamma_1,\dots,\gamma_r$ in $\Fq$. Let $R(Y,Z)\in \Fq[Y,Z]$ be the unique polynomial satisfying
$$
R(Y,Z)\equiv Q(0,Y,Z)\pmod{Y^q-Y},
\qquad
\deg_Y R\le q-1.
$$
Write
$$
R(Y,Z)=\sum_{i=0}^{q-1} a_i(Z)Y^i,
\qquad
R_{\gamma_j}(Y):=R(Y,\gamma_j)=\sum_{i=0}^{q-1} a_i(\gamma_j)Y^i.
$$
After reindexing if necessary, assume that
$$
a_0(\gamma_j)=0 \quad \text{for } 1\le j\le t,
\qquad
a_0(\gamma_j)\neq 0 \quad \text{for } t+1\le j\le r.
$$
For each $j=1,\dots,r$, define
$$
S_{\gamma_j}(Y)=
\begin{cases}
\dfrac{R_{\gamma_j}(Y)}{Y}, & \text{if } a_0(\gamma_j)=0,\\[1ex]
\bigl(a_0(\gamma_j)+a_{q-1}(\gamma_j)\bigr)
+a_1(\gamma_j)Y+\cdots+a_{q-2}(\gamma_j)Y^{q-2},
 & \text{if } a_0(\gamma_j)\neq 0.
\end{cases}
$$
Let $\rho_{\gamma_j}$ denote the rank of the left circulant matrix associated with $S_{\gamma_j}(Y)$. Then
$$
N_0'=r(q-1)-\sum_{j=1}^r \rho_{\gamma_j}+t.
$$
\end{theorem}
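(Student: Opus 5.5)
The plan is to reduce the count root by root, using Proposition~\ref{thm:rootwise-reduction}, and then to repeat for each fixed root $\gamma_j$ the argument of Theorem~\ref{thm:KR-general-N0_1}. The new feature is the case $a_0(\gamma_j)=0$, where I will divide by $Y$ instead of folding $Y^{q-1}$ into the constant term. Proposition~\ref{thm:rootwise-reduction} gives
$$N_0'=\sum_{j=1}^r \#\{\be\in\Fq\mid Q(0,\be,\gamma_j)=0\}.$$
Since $Y^q-Y$ vanishes identically on $\Fq$, we have $R(\be,\gamma_j)=Q(0,\be,\gamma_j)$ for all $\be\in\Fq$. So each summand equals the number of roots of $R_{\gamma_j}(Y)$ in $\Fq$. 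I will split this count into the root $\be=0$ and the roots in $\Fq^\times$. The root $\be=0$ occurs exactly when $a_0(\gamma_j)=0$, that is, for $1\le j\le t$.

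\emph{Case $a_0(\gamma_j)\neq 0$.} Here $0$ is not a root. For $\be\in\Fq^\times$ we have $\be^{q-1}=1$, so $R_{\gamma_j}(\be)=S_{\gamma_j}(\be)$, exactly as in the proof of Theorem~\ref{thm:KR-general-N0_1}. Since $\deg S_{\gamma_j}\le q-2$, the K\"onig--Rados theorem (Theorem~\ref{kongrados}) applies directly. It gives $q-1-\rho_{\gamma_j}$ nonzero roots, and hence $q-1-\rho_{\gamma_j}$ roots in total.

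\emph{Case $a_0(\gamma_j)=0$.} Now $R_{\gamma_j}(Y)=Y\,S_{\gamma_j}(Y)$ with $S_{\gamma_j}(Y)=\sum_{i=1}^{q-1}a_i(\gamma_j)Y^{i-1}$, which is a polynomial of degree at most $q-2$. For $\be\neq0$ we have $R_{\gamma_j}(\be)=0$ if and only if $S_{\gamma_j}(\be)=0$. Theorem~\ref{kongrados} therefore gives $q-1-\rho_{\gamma_j}$ nonzero roots. Adding the root $\be=0$, the total is $q-1-\rho_{\gamma_j}+1$.

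Summing over $j$ gives
$$N_0'=\sum_{j=1}^r (q-1-\rho_{\gamma_j})+t=r(q-1)-\sum_{j=1}^r\rho_{\gamma_j}+t,$$
which is the stated formula.

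The main point needing care is that Theorem~\ref{kongrados} is stated for polynomials written with coefficients up to $Y^{q-2}$. I must check that both definitions of $S_{\gamma_j}$ genuinely land in that form. In particular, in the first case the coefficient $a_{q-1}(\gamma_j)$ becomes the $Y^{q-2}$ coefficient, so it is not lost.

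I also need to check the degenerate situation $S_{\gamma_j}=0$, which happens when $R_{\gamma_j}\equiv0$. The left circulant matrix then has rank $0$, and the formula gives $q-1+1=q$ roots. This is correct because every $\be\in\Fq$ is then a root; in this situation $a_0(\gamma_j)=0$ automatically, so it falls in the second case. The only possible trouble is a tiny field such as $q=2$, where the matrix is $1\times1$. I expect the K\"onig--Rados statement to remain valid there, but I will note it as a check rather than an obstacle.
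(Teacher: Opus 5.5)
Your proposal is correct and follows the paper's proof essentially step for step: you reduce rootwise via Proposition~\ref{thm:rootwise-reduction}, replace $Q(0,Y,\gamma_j)$ by $R_{\gamma_j}$, split off $\beta=0$ according to whether $a_0(\gamma_j)=0$, and apply the K\"onig--Rados theorem to $S_{\gamma_j}$ (of degree at most $q-2$) in both cases. One harmless slip in your side remark: $S_{\gamma_j}=0$ can also occur with $a_0(\gamma_j)\neq 0$, namely when $R_{\gamma_j}=a_0(\gamma_j)(1-Y^{q-1})$; there $\rho_{\gamma_j}=0$ and the formula correctly gives $q-1$ roots, so the argument is unaffected.
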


\begin{proof}
For each $j=1,\dots,r$, let
$$
N_{\gamma_j}
=
\#\{\beta\in\Fq\mid Q(0,\beta,\gamma_j)=0\}.
$$
Since
$$
R(Y,Z)\equiv Q(0,Y,Z)\pmod{Y^q-Y},
$$
$Q(0,Y,\gamma_j)$ and $R_{\gamma_j}(Y)$ have the same values on $\Fq$. Hence
$$
N_{\gamma_j}
=
\#\{\beta\in\Fq\mid R_{\gamma_j}(\beta)=0\}.
$$

\noindent
If $1\le j\le t$, then $a_0(\gamma_j)=0$, so
$$
R_{\gamma_j}(Y)=Y\,S_{\gamma_j}(Y)
$$
with
$$
\deg S_{\gamma_j}\le q-2.
$$
Therefore, the roots of $R_{\gamma_j}(Y)$ in $\Fq$ consist of $0$ together with the nonzero roots of $S_{\gamma_j}(Y)$. By the K\"onig--Rados theorem,
$$
N_{\gamma_j}
=
1+\#\{\beta\in\Fq^\times\mid S_{\gamma_j}(\beta)=0\}
=
1+(q-1)-\rho_{\gamma_j}.
$$
If $t+1\leq j\leq r$, then $a_0(\gamma_j)\neq0$, so $0$ is not a root of $R_{\gamma_j}(Y)$. For $\beta\in\Fq^\times$, we have $\beta^{q-1}=1$, and hence
\begin{align*}
R_{\gamma_j}(\beta)
&=
a_0(\gamma_j)
+a_1(\gamma_j)\beta+\cdots
+a_{q-2}(\gamma_j)\beta^{q-2}
+a_{q-1}(\gamma_j)\beta^{q-1}\\
&=
\bigl(a_0(\gamma_j)+a_{q-1}(\gamma_j)\bigr)
+a_1(\gamma_j)\beta+\cdots
+a_{q-2}(\gamma_j)\beta^{q-2}\\
&=
S_{\gamma_j}(\beta).
\end{align*}
Thus, the nonzero roots of $R_{\gamma_j}(Y)$ are precisely the nonzero roots of $S_{\gamma_j}(Y)$. Since $\deg S_{\gamma_j}\leq q-2$, the K\"onig--Rados theorem gives
$$
N_{\gamma_j}
=
(q-1)-\rho_{\gamma_j}.
$$

\noindent
Therefore
\begin{align*}
    N_0'
&=
\sum_{j=1}^r N_{\gamma_j}\\
&=
\sum_{j=1}^t\bigl(1+(q-1)-\rho_{\gamma_j}\bigr)
+
\sum_{j=t+1}^r\bigl((q-1)-\rho_{\gamma_j}\bigr) \\
&=
r(q-1)-\sum_{j=1}^r\rho_{\gamma_j}+t.
\end{align*}
\end{proof}
\noindent
These general formulas reduce the computation of $N_0'$ to counting the roots of $Q(0,Y,\g)$ in $\Fq$ as $\g$ ranges over the roots of $P(0,Z)$. We now apply these methods to several special forms of the defining polynomials.

\subsection{Homogeneous and one-root reductions}

\noindent
We now study situations in which homogeneity simplifies the computation of $N_0'$. This leads to direct formulas for several families, including cases where the reduced system has a particularly simple form.

\begin{prop}\label{prop:one-root-reduction}
Let $P(X,Z)\in\Fq[X,Z]$ be homogeneous of degree $h\geq2$. Define
$$
r_Q
=
\#\{\be\in\Fq\mid Q(0,\be,0)=0\}.
$$
Then
$
N_0'=r_Q.
$
\end{prop}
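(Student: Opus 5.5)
The plan is to reduce the statement to Proposition~\ref{thm:rootwise-reduction} by showing that $P(0,Z)$ has exactly one root in $\Fq$, namely $Z=0$. Write the homogeneous polynomial as
$$P(X,Z)=\sum_{i=0}^{h} c_i X^{h-i}Z^{i}.$$
Setting $X=0$ kills every term except the pure power of $Z$, so $P(0,Z)=c_hZ^h$. Since $P$ is monic in $Z$ with $\deg_Z P\ge 2$ (the standing hypothesis of this section), the leading coefficient in $Z$ is nonzero. Homogeneity forces the top $Z$-degree term to be $c_hZ^h$ itself, so $c_h=1$ and $P(0,Z)=Z^h$.

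In detail, the only care needed is to justify $c_h\neq 0$. Any monomial $X^aZ^b$ of $P$ has $a+b=h$, so $b\le h$, with equality only for $Z^h$. Hence $\deg_Z P\le h$. If $c_h=0$, the $Z$-leading coefficient would be a nonzero multiple of a positive power of $X$, which contradicts monicity in $Z$. Therefore $c_h=1$, as claimed.

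With $P(0,Z)=Z^h$ in hand, the roots of $P(0,Z)$ in $\Fq$ are exactly $\{0\}$, as in the earlier theorem on homogeneous one-variable polynomials and the monomial corollary, so $r=1$ and $\g_1=0$. Proposition~\ref{thm:rootwise-reduction} then gives
$$N_0'=j_1=\#\{\be\in\Fq\mid Q(0,\be,0)=0\}=r_Q.$$
Equivalently, one could invoke Proposition~\ref{prop:gcd-method-double} with $P_2(Z)=Z^h$.

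The main (mild) obstacle is the step $c_h\ne0$. It genuinely uses monicity in $Z$: without it, a homogeneous $P$ such as $XZ$ would give $P(0,Z)\equiv0$, and then every $\g\in\Fq$ would be a root.
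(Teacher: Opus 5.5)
Your proposal is correct and follows the paper's own argument. Both proofs combine homogeneity with monicity in $Z$ to get $P(0,Z)=Z^h$, so $\g=0$ is the only root in $\Fq$, and then read off $N_0'=r_Q$ from the rootwise reduction; your explicit check that the coefficient of $Z^h$ is nonzero fills in a step the paper simply asserts when it writes $P$ in that form.
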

\begin{proof}
Since $P(X,Z)$ is homogeneous of degree $h$ and monic in $Z$, we may write
$$
P(X,Z)=Z^{h}+\sum_{i=0}^{h-1} a_iX^{h-i}Z^i
$$
for some $a_i\in\Fq$. Hence $P(0,Z)=Z^{h}.$ Therefore $P(0,\g)=0$ if and only if $\g=0$, and hence
$$
N_0'
=
\#\{\be\in\Fq\mid Q(0,\be,0)=0\}
=
r_Q.
$$
\end{proof}

\begin{theorem}\label{thm:q2-homog}
Let $P(X,Z)\in\Fq[X,Z]$ and $Q(X,Y,Z)\in\Fq[X,Y,Z]$ be homogeneous of degrees $h_1$ and $h_2$ respectively, with $h_1,h_2\ge 2$. Then $N_0'=1.$
\end{theorem}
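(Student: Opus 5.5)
The plan is to reduce to Proposition~\ref{prop:one-root-reduction} and then apply the same homogeneity argument a second time, now to $Q$. Since $P(X,Z)$ is homogeneous of degree $h_1\ge 2$ and monic in $Z$, Proposition~\ref{prop:one-root-reduction} gives
$$
N_0'=r_Q=\#\{\be\in\Fq\mid Q(0,\be,0)=0\}.
$$
Thus everything comes down to identifying the one-variable polynomial $Q(0,Y,0)$.

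For that step, I will write $Q$ as a sum of monomials $c\,X^{i}Y^{j}Z^{k}$ with $i+j+k=h_2$. Setting $X=0$ and $Z=0$ kills every monomial except those with $i=k=0$, that is, the single monomial $Y^{h_2}$. Since $Q$ is monic in $Y$ with $\deg_Y Q\ge 2$, and homogeneity forces $\deg_Y Q\le h_2$, I will read the monic hypothesis as saying that the leading $Y$-term is $Y^{h_2}$ with coefficient $1$. Then $Q(0,Y,0)=Y^{h_2}$, exactly as $P(0,Z)=Z^{h_1}$ arose in the proof of Proposition~\ref{prop:one-root-reduction}.

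Finally, $Y^{h_2}=0$ has the unique solution $\be=0$ in $\Fq$, either directly or by the monomial corollary of Theorem~\ref{thm:single-binomial-N0}. Hence $r_Q=1$, so $N_0'=1$. As a consequence, Corollary~\ref{cor:double-total-count} gives $|S_{d_1,d_2}(\Fq)|=q^2$.

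The only delicate point is the interpretation of ``monic in $Y$'' for a homogeneous $Q$. If the leading $Y$-coefficient were a nonconstant polynomial in $X$ and $Z$, then the $Y^{h_2}$ term could vanish, and $Q(0,Y,0)$ could be identically zero, which would give $N_0'=q$. I would therefore state explicitly that, as in the proof of Proposition~\ref{prop:one-root-reduction}, monicity means the coefficient of the top power $Y^{h_2}$ is $1$. This is consistent with how the paper treats $P$, and it forces $Q(0,Y,0)=Y^{h_2}\neq 0$.
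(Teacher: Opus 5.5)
Your proof is correct and follows the paper's argument: homogeneity together with monicity gives $P(0,Z)=Z^{h_1}$ and $Q(0,Y,0)=Y^{h_2}$, so $(\be,\g)=(0,0)$ is the unique solution. Routing the first half through Proposition~\ref{prop:one-root-reduction} only repackages that same step. The ``delicate point'' you flag is not an extra assumption. If the leading $Y$-coefficient of a homogeneous $Q$ equals $1$, that coefficient has degree $0$, so $\deg_Y Q=h_2$ is forced and $Q(0,Y,0)=Y^{h_2}$ follows automatically.
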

\begin{proof}
Since $P(X,Z)$ is homogeneous of degree $h_1$ and monic in $Z$, we may write
$$
P(X,Z)=Z^{h_1}+\sum_{i=0}^{h_1-1} a_iX^{h_1-i}Z^i
$$
for some $a_i\in\Fq$, and hence $P(0,Z)=Z^{h_1}.$ Similarly, since $Q(X,Y,Z)$ is homogeneous of degree $h_2$ and monic in $Y$, we have $Q(0,Y,0)=Y^{h_2}.$ Therefore the reduced system
$$
P(0,\g)=0,
\qquad
Q(0,\be,\g)=0
$$
has the unique solution $(\be,\g)=(0,0)$. Thus $N_0'=1.$
\end{proof}

\begin{remark}
By Corollary~\ref{cor:double-total-count}, Theorem~\ref{thm:q2-homog} gives
$$
|S_{d_1,d_2}(\Fq)|=q^2.
$$
This gives the case of square numbers, that is, the $4$-gonal numbers, considered in Section~\ref{S7}.
\end{remark}
\begin{corollary}\label{cor:homogeneous-reduction}
Suppose that $P(0,Z)$ has $r_0$ distinct roots in $\Fq$, of which exactly $r_1$ are nonzero. Let $Q(0,Y,Z)\in\Fq[Y,Z]$ be homogeneous of degree $h$ and monic in $Y$, and define $Q_{Y/Z}(W):=Q(0,W,1).$ Assume that $Q_{Y/Z}(W)$ has exactly $r_2$ distinct roots in $\Fq$. Then $N_0'=r_1r_2+\delta,$ where
$$
\delta=
\begin{cases}
1, & \text{if } 0 \text{ is a root of } P(0,Z),\\
0, & \text{otherwise}.
\end{cases}
$$
\end{corollary}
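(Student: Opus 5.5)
The plan is to apply Proposition~\ref{thm:rootwise-reduction}. It gives $N_0'=\sum_{\g}\#\{\be\in\Fq\mid Q(0,\be,\g)=0\}$, with $\g$ running over the $r_0$ distinct roots of $P(0,Z)$ in $\Fq$. I will then split this sum according to whether $\g=0$ or $\g\neq0$.

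\textbf{The root $\g=0$.} Since $Q(0,Y,Z)$ is homogeneous of degree $h$ and monic in $Y$, we can write $Q(0,Y,Z)=Y^h+\sum_{i<h}c_iY^iZ^{h-i}$. Hence $Q(0,Y,0)=Y^h$, whose only root in $\Fq$ is $\be=0$. So if $0$ is a root of $P(0,Z)$, it contributes exactly $1$, which accounts for the term $\delta$.

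\textbf{A nonzero root $\g\in\Fq^\times$.} Homogeneity gives
$$
Q(0,\be,\g)=\g^hQ(0,\be\g^{-1},1)=\g^hQ_{Y/Z}(\be/\g).
$$
Since $\g^h\neq0$, we have $Q(0,\be,\g)=0$ if and only if $Q_{Y/Z}(\be/\g)=0$. The map $\be\mapsto \be/\g$ is a bijection of $\Fq$, so the number of such $\be$ equals the number $r_2$ of distinct roots of $Q_{Y/Z}$ in $\Fq$. This count is independent of $\g$. Summing over the $r_1$ nonzero roots gives $r_1r_2$.

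Adding the two contributions yields $N_0'=r_1r_2+\delta$. The only point requiring care is the scaling identity. Because $Q(0,Y,Z)$ is genuinely homogeneous of degree $h$, the identity $Q(0,\lambda Y,\lambda Z)=\lambda^hQ(0,Y,Z)$ holds with $\lambda=\g$ applied to $(\be/\g,1)$. Monicity in $Y$ ensures that $Q_{Y/Z}$ is a nonzero monic polynomial of degree $h$, so $r_2$ is finite and well defined. I do not expect any real obstacle here.
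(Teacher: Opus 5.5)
Your proof is correct and follows the paper's argument: the rootwise reduction, the scaling identity $Q(0,\beta,\gamma)=\gamma^hQ_{Y/Z}(\beta/\gamma)$ for each nonzero root $\gamma$ giving $r_1r_2$, and a separate contribution of exactly one solution at $\gamma=0$. The only difference is that you justify the $\gamma=0$ contribution directly from $Q(0,Y,0)=Y^h$, which forces $h\geq 1$ (guaranteed here since $Q$ is monic in $Y$ of $Y$-degree at least $2$). This is cleaner than the paper's appeal to Theorem~\ref{thm:q2-homog}, whose hypothesis that $P$ be homogeneous is not assumed in this corollary.
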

\begin{proof}
Let $\g_1,\dots,\g_{r_1}\in\Fq^\times$ be the distinct nonzero roots of $P(0,Z)$. Since $Q(0,Y,Z)$ is homogeneous of degree $h$, we have 
$$Q(0,Y,Z)=Z^hQ_{Y/Z}(Y/Z).$$
Fix $i\in\{1,\dots,r_1\}$. Since $\g_i\neq 0$, for $\be\in\Fq$,
$$
Q(0,\be,\g_i)=0
\iff
Q_{Y/Z}(\be/\g_i)=0.
$$ 
Hence the number of $\be\in\Fq$ satisfying $Q(0,\be,\g_i)=0$ is exactly $r_2$. Therefore the total contribution from the nonzero roots of $P(0,Z)$ is $r_1r_2.$

\medskip
\noindent
If $0$ is also a root of $P(0,Z)$, then by Theorem~\ref{thm:q2-homog}, the reduced system at $\g=0$ contributes exactly one solution in $\be$. Hence
$$
N_0'=r_1r_2+\delta.
$$
\end{proof}

\begin{prop}\label{prop:homogeneous-qminus2}
Let $q>3$ and $P(0,Z)$ has $r_0$ distinct roots in $\Fq$, of which exactly $r_1$ are nonzero. Let $Q(0,Y,Z)\in\Fq[Y,Z]$ be homogeneous of degree $q-2$ and monic in $Y$, and define $Q_{Y/Z}(W):=Q(0,W,1).$ Let $\rho_Q$ be the rank of the left circulant matrix associated with $Q_{Y/Z}(W)$, and set
$$
\delta_Q=
\begin{cases}
1, & \text{if } Q(0,0,1)=0,\\
0, & \text{if } Q(0,0,1)\neq 0.
\end{cases}
$$
Then $N_0'=r_1\bigl(q-1-\rho_Q+\delta_Q\bigr)+\delta,$ where
$$
\delta=
\begin{cases}
1, & \text{if } 0 \text{ is a root of } P(0,Z),\\
0, & \text{if } 0 \text{ is not a root of } P(0,Z).
\end{cases}
$$
\end{prop}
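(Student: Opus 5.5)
The plan is to reduce the statement to Corollary~\ref{cor:homogeneous-reduction} and then compute $r_2$ with Proposition~\ref{prop:KR-count}. By Corollary~\ref{cor:homogeneous-reduction}, applied to the homogeneous polynomial $Q(0,Y,Z)$ of degree $h=q-2$, we already know
$$
N_0'=r_1r_2+\delta,
$$
where $r_2$ is the number of distinct roots of $Q_{Y/Z}(W)=Q(0,W,1)$ in $\Fq$. The contribution $\delta$ from the root $\g=0$ of $P(0,Z)$ comes from $Q(0,Y,0)=Y^{q-2}$, which has only the root $\be=0$. So it only remains to show
$$
r_2=q-1-\rho_Q+\delta_Q.
$$

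To get this, first check that Proposition~\ref{prop:KR-count} applies to $Q_{Y/Z}(W)$. Since $Q(0,Y,Z)$ is homogeneous of degree $q-2$ and monic in $Y$, write
$$
Q(0,Y,Z)=Y^{q-2}+\sum_{i=0}^{q-3}c_iY^iZ^{q-2-i}.
$$
Then
$$
Q_{Y/Z}(W)=c_0+c_1W+\cdots+c_{q-3}W^{q-3}+W^{q-2},
$$
which has degree exactly $q-2\le q-2$. Its constant term is $c_0=Q(0,0,1)$. With the hypothesis $q>3$, Proposition~\ref{prop:KR-count} gives the number of roots of $Q_{Y/Z}$ in $\Fq$ as $q-1-\rho_Q+\delta_{c_0}$, and $\delta_{c_0}$ coincides with $\delta_Q$ as defined in the statement. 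Substituting this value of $r_2$ into the formula from Corollary~\ref{cor:homogeneous-reduction} gives
$$
N_0'=r_1\bigl(q-1-\rho_Q+\delta_Q\bigr)+\delta.
$$

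The main point needing care is the application of Corollary~\ref{cor:homogeneous-reduction}, specifically the identity $Q(0,Y,Z)=Z^hQ_{Y/Z}(Y/Z)$ used for the nonzero roots $\g_i$, and the claim that the root $\g=0$ contributes exactly one solution. That claim is justified there by citing Theorem~\ref{thm:q2-homog}, but it really uses only monicity in $Y$, so that $Q(0,Y,0)=Y^{q-2}$. I would verify this directly rather than rely on the homogeneity of $P$. The rest is routine bookkeeping: the left circulant matrix is indexed by the coefficients $c_0,\dots,c_{q-3},1$ of $Q_{Y/Z}$.
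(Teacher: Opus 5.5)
Your proposal is correct and follows essentially the paper's argument. The paper's proof inlines two pieces you cite: the scaling $\beta\mapsto\beta/\gamma$ for nonzero roots (the content of Corollary~\ref{cor:homogeneous-reduction}), and the K\"onig--Rados count with a separate check of whether $W=0$ is a root of $Q_{Y/Z}$ (which is what Proposition~\ref{prop:KR-count} packages). Your remark that the $\gamma=0$ contribution should be checked directly from $Q(0,Y,0)=Y^{q-2}$, rather than via Theorem~\ref{thm:q2-homog}, is apt, and it is exactly how the paper's own proof handles that case.
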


\begin{proof}
Since $Q(0,Y,Z)$ is homogeneous of degree $q-2$, we may write
$$
Q(0,Y,Z)=Z^{q-2}Q_{Y/Z}(Y/Z),
\qquad
Q_{Y/Z}(W)=Q(0,W,1).
$$
Let $\g$ be a nonzero root of $P(0,Z)$. Then, for $\be\in\Fq$,
$$
Q(0,\be,\g)=0
\iff
\g^{q-2}Q_{Y/Z}(\be/\g)=0
\iff
Q_{Y/Z}(\be/\g)=0.
$$
Hence each nonzero root $\g$ of $P(0,Z)$ contributes exactly as many values of $\be$ as there are roots of $Q_{Y/Z}(W)$ in $\Fq$.

\noindent
Now $Q_{Y/Z}(W)$ has degree at most $q-2$, so the K\"onig--Rados theorem applies. Therefore the number of nonzero roots of $Q_{Y/Z}(W)$ in $\Fq$ is
$$
q-1-\rho_Q.
$$
Also, $0$ is a root of $Q_{Y/Z}(W)$ if and only if
$$
Q_{Y/Z}(0)=Q(0,0,1)=0.
$$
Thus the total number of roots of $Q_{Y/Z}(W)$ in $\Fq$ is
$$
q-1-\rho_Q+\delta_Q.
$$
Since $P(0,Z)$ has exactly $r_1$ nonzero roots, the total contribution from these roots is
$$
r_1\bigl(q-1-\rho_Q+\delta_Q\bigr).
$$

\noindent
If $0$ is also a root of $P(0,Z)$, then by homogeneity
$$
Q(0,Y,0)=Y^{q-2},
$$
so $\be=0$ is the unique solution of $Q(0,\be,0)=0$. Hence the contribution from $\g=0$ is $\delta$. Therefore
$$
N_0'=r_1\bigl(q-1-\rho_Q+\delta_Q\bigr)+\delta.
$$
\end{proof}

\subsection{Monomial and binomial families}

\noindent
We next study families in which the reduced equation $Q(0,Y,\g)=0$ is monomial or binomial in the variable $Y.$ In these cases, the computation of $N_0'$ is governed by the solvability of power equations over $\Fq,$ and several explicit formulas can be obtained.

\begin{theorem}\label{prop:Ym-aZm}
Suppose that
$
Q(X,Y,Z)=XQ_1(X,Y,Z)+Y^m-b(Z),
$
where
$
Q_1(X,Y,Z)\in\Fq[X,Y,Z],
~
b(Z)\in\Fq[Z],
$
with $\deg_Y Q_1(X,Y,Z)<m$ and $m\geq2$. Set
$d_m=\gcd(m,q-1)$.
\begin{enumerate}
    \item Define
$$
\begin{aligned}
n_0
&=
\#\{\g\in\Fq\mid P(0,\g)=0,\ b(\g)=0\},\\
n_1
&=
\#\left\{\g\in\Fq\mid P(0,\g)=0,\ 
b(\g)\neq0,\ 
b(\g)^{(q-1)/d_m}=1\right\}.
\end{aligned}
$$
Then $N_0'=n_0+d_m\,n_1$. 
\item 
In particular, if $b(Z)=aZ^m$, where $a\in\Fq$, and if $P(0,Z)$ has $r$ distinct roots in $\Fq$, then
$$
N_0'=
\begin{cases}
r, & a=0,\\
\delta, & a\neq 0 \text{ and } a^{(q-1)/d_m}\neq 1,\\
d_mr+(1-d_m)\delta, & a\neq 0 \text{ and } a^{(q-1)/d_m}=1,
\end{cases}
$$
where
$$
\delta=
\begin{cases}
1, & P(0,0)=0,\\
0, & P(0,0)\neq 0.
\end{cases}
$$
\end{enumerate}
\end{theorem}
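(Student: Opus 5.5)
The plan is to reduce everything to root counts of the binomial $Y^m-b(\g)$ via Proposition~\ref{thm:rootwise-reduction}, and then to apply Theorem~\ref{thm:single-binomial-N0} root by root.

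Setting $X=0$ in the defining polynomial gives $Q(0,Y,Z)=Y^m-b(Z)$, because the term $XQ_1$ vanishes. By Proposition~\ref{thm:rootwise-reduction},
$$
N_0'=\sum_{\g\in\Fq,\ P(0,\g)=0}\#\{\be\in\Fq\mid \be^m=b(\g)\}.
$$
We partition the roots $\g$ of $P(0,Z)$ according to the value of $b(\g)$ and apply Theorem~\ref{thm:single-binomial-N0} to $Y^m-b(\g)$:
\begin{enumerate}
\item if $b(\g)=0$, the only solution is $\be=0$, so such $\g$ contribute $1$ each;
\item if $b(\g)\neq0$ and $b(\g)^{(q-1)/d_m}=1$, there are exactly $d_m$ solutions;
\item otherwise there are no solutions.
\end{enumerate}
Summing over the three classes gives $N_0'=n_0\cdot 1+n_1\cdot d_m+0$, which proves part (i).

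For part (ii), take $b(Z)=aZ^m$ and identify $n_0$ and $n_1$ explicitly.
\begin{enumerate}
\item If $a=0$, then $b(\g)=0$ for every root $\g$. Hence $n_0=r$, $n_1=0$, and $N_0'=r$.
\item If $a\neq0$, then $b(\g)=0$ exactly when $\g=0$, so $n_0=\delta$.
\end{enumerate}
In the case $a\neq0$, for a nonzero root $\g$ we compute
$$
b(\g)^{(q-1)/d_m}=a^{(q-1)/d_m}\,\g^{m(q-1)/d_m}.
$$
Since $d_m\mid m$, we can write $m(q-1)/d_m=(m/d_m)(q-1)$, so $\g^{m(q-1)/d_m}=1$ because $\g^{q-1}=1$. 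Thus the criterion $b(\g)^{(q-1)/d_m}=1$ reduces to $a^{(q-1)/d_m}=1$, independently of $\g$.
\begin{enumerate}
\item If $a^{(q-1)/d_m}\neq1$, then $n_1=0$ and $N_0'=\delta$.
\item If $a^{(q-1)/d_m}=1$, then every nonzero root qualifies, so $n_1=r-\delta$ and
$$
N_0'=\delta+d_m(r-\delta)=d_mr+(1-d_m)\delta.
$$
\end{enumerate}

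The only step needing care is this uniformity observation, namely that $\g^{m(q-1)/d_m}=1$ for every $\g\in\Fq^\times$. It relies on $d_m\mid m$, and it is what makes the condition depend on $a$ alone. The rest is bookkeeping, with the roots of $P(0,Z)$ split according to whether $\g=0$.
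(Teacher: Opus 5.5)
Your proposal is correct and follows essentially the same route as the paper: reduce via Proposition~\ref{thm:rootwise-reduction} to counting solutions of $Y^m=b(\gamma)$, apply the binomial solvability criterion, and then specialize to $b(Z)=aZ^m$. The paper cites Lemma~\ref{lem:binomial-solvability} directly where you cite Theorem~\ref{thm:single-binomial-N0}. You also justify $\gamma^{m(q-1)/d_m}=1$ from $d_m\mid m$, which the paper leaves implicit.
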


\begin{proof}  For each root $\g\in \Fq$ of $P(0,Z),$ let $Q(0,Y,\g)$ be the reduced polynomial. 
\begin{enumerate}
\item  For $Q(0,Y,\g)=0$ we get $Y^m=b(\g).$ If $b(\g)=0,$ then $Y=0$ is the unique solution, contributing $n_0$ solutions in total. If $b(\g)\neq 0,$ then by Lemma~\ref{lem:binomial-solvability}, the equation $Y^m=b(\g)$ has solutions in $\Fq$ if and only if $b(\g)^{(q-1)/d_m}=1,$ and in that case it has exactly $d_m$ solutions. The number of such values of $\g$ is $n_1$. Therefore $N_0'=n_0+d_m\,n_1.$
\medskip

\item If $a=0$, then $b(\g)=0$ for every root $\g$ of $P(0,Z)$. Thus $n_0=r$ and $n_1=0$, so $N_0'=r$.

\medskip
\noindent
Now let $a\neq 0$. Then $b(\g)=a\g^m$, so $b(\g)=0$ if and only if $\g=0$. Hence $n_0=\delta$. 

\noindent
For $\g\neq 0$,
$$
b(\g)^{(q-1)/d_m}=(a\g^m)^{(q-1)/d_m}=a^{(q-1)/d_m}.
$$
Therefore, if $a^{(q-1)/d_m}\neq 1$, then $n_1=0$ and $N_0'=\delta$.

\medskip
\noindent
If $a^{(q-1)/d_m}=1$, then exactly the nonzero roots of $P(0,Z)$ contribute to $n_1$, so $n_1=r-\delta$. Hence $N_0'=n_0+d_m\,n_1=\delta+d_m(r-\delta)=d_mr+(1-d_m)\delta.$
\end{enumerate}
\end{proof}

\begin{corollary}{\label{hexa}}
If $P(0,Z)$ has $r$ distinct roots in $\Fq$ and $Q(0,Y,Z)=Y^m$ for some $m\geq2$ is a monomial, then $N_0'=r.$
\end{corollary}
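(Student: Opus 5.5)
The plan is to obtain Corollary~\ref{hexa} from Theorem~\ref{prop:Ym-aZm}~(ii) with $b(Z)=aZ^m$ and $a=0$, and to cross-check it with a direct root count via Proposition~\ref{thm:rootwise-reduction}.

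First I will put $Q$ in the form required by Theorem~\ref{prop:Ym-aZm}. Since $Q(0,Y,Z)=Y^m$, every monomial of $Q(X,Y,Z)-Y^m$ is divisible by $X$. Hence we can write $Q=XQ_1(X,Y,Z)+Y^m$ with $b(Z)=0$, which is $aZ^m$ with $a=0$. The theorem also assumes $\deg_Y Q_1<m$. This is not given to us, and it is the only point needing care.

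To avoid this hypothesis, I will count directly. The proof of Theorem~\ref{prop:Ym-aZm} only uses the reduced polynomial $Q(0,Y,\g)$ anyway. Let $\g$ be any of the $r$ distinct roots of $P(0,Z)$. The equation $Q(0,\be,\g)=\be^m=0$ has the unique solution $\be=0$ in the field $\Fq$. So each $j_i=1$, and Proposition~\ref{thm:rootwise-reduction} gives $N_0'=\sum_{i=1}^r 1=r$. This matches the case $a=0$ of Theorem~\ref{prop:Ym-aZm}~(ii), where $n_0=r$ and $n_1=0$.

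There is no real obstacle here. The only subtlety is the degree hypothesis on $Q_1$, and the direct rootwise count bypasses it. As a final remark, Corollary~\ref{cor:double-total-count} then gives $|S_{d_1,d_2}(\Fq)|=q(q-1+r)$.
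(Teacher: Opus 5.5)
Your proposal is correct and is essentially the paper's argument, which simply invokes Theorem~\ref{prop:Ym-aZm}(ii) with $a=0$; your direct rootwise count is that theorem's proof specialized to $n_0=r$, $n_1=0$. The degree hypothesis you worried about is in fact automatic: $Q$ is monic in $Y$ and $Q(0,Y,Z)=Y^m$, so the leading $Y$-coefficient $1$ survives at $X=0$, which forces $\deg_Y Q=m$ and hence $\deg_Y Q_1<m$.
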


\begin{proof}
The proof follows from Theorem \ref{prop:Ym-aZm} $(ii)$ by taking $a=0$.
\end{proof}

\begin{remark}
In particular, in Corollary \ref{hexa}, if $P(0,Z)=Z^q-Z$, then $r=q$. Hence
$
N_0'=q$ and $
|S_{d_1,d_2}(\Fq)|=2q^2-q.
$
Thus, the point count is the $q$-th hexagonal number, corresponding to the $6$-gonal numbers discussed in Section~\ref{S7}.
\end{remark}
\noindent
The family considered above also admits an expression in terms of multiplicative characters.

\begin{theorem}\label{thm:multchar-binomial}
Suppose that $P(0,Z)$ has $r$ distinct roots in $\Fq$. Let
$$
Q(X,Y,Z)=XQ_1(X,Y,Z)+Y^m-b(Z),
$$
where
$
Q_1(X,Y,Z)\in\Fq[X,Y,Z],
~
b(Z)\in\Fq[Z],
$
with $\deg_Y Q_1(X,Y,Z)<m$ and $m\geq2$. Set
$
d_m=\gcd(m,q-1).
$
Then
$$
N_0'
=
r+
\sum_{\substack{\psi^{d_m}=\psi_0\\ \psi\neq\psi_0}}
\sum_{\substack{\g\in\Fq\\ P(0,\g)=0}}
\psi(b(\g)),
$$
where the outer sum is taken over all nontrivial multiplicative characters $\psi$ of $\Fq$ satisfying $\psi^{d_m}=\psi_0$.
\end{theorem}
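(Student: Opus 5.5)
The plan is to reduce to counting solutions of $Y^m=b(\g)$ root by root, and then to replace each count by a multiplicative character sum.

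\emph{Reduction.} By Proposition~\ref{thm:rootwise-reduction}, $N_0'$ is the sum, over the roots $\g$ of $P(0,Z)$ in $\Fq$, of
$$
\#\{\be\in\Fq\mid Q(0,\be,\g)=0\}.
$$
Since $Q(0,Y,Z)=Y^m-b(Z)$, the summand for a fixed $\g$ is the number $N(Y^m=b(\g))$ of solutions of $Y^m=b(\g)$ in $\Fq$.

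\emph{Key identity.} It then suffices to prove, for every $c\in\Fq$,
$$
N(Y^m=c)=\sum_{\psi^{d_m}=\psi_0}\psi(c)=1+\sum_{\substack{\psi^{d_m}=\psi_0\\ \psi\neq\psi_0}}\psi(c),
$$
using the convention $\psi_0(0)=1$ and $\psi(0)=0$ for $\psi\neq\psi_0$. Summing this over the $r$ roots $\g$ and interchanging the two finite sums gives exactly the stated formula.

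\emph{Case $c=0$.} The only solution is $Y=0$, so $N=1$. On the right-hand side only $\psi_0$ contributes, giving $1$.

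\emph{Case $c\neq0$.} Lemma~\ref{multcharsol} is stated only for $q=p$, so I would reprove it over $\Fq$.
\begin{enumerate}
\item Since $\Fq^\times$ is cyclic of order $q-1$ and $d_m\mid q-1$, the characters $\psi$ with $\psi^{d_m}=\psi_0$ are exactly the characters trivial on the subgroup $(\Fq^\times)^{d_m}$ of $d_m$-th powers. This subgroup has index $d_m$, so there are exactly $d_m$ such characters.
\item They form the character group of the quotient $\Fq^\times/(\Fq^\times)^{d_m}$. By Theorem~\ref{5.4} applied to this quotient, $\sum_{\psi^{d_m}=\psi_0}\psi(c)$ equals $d_m$ if $c\in(\Fq^\times)^{d_m}$, and $0$ otherwise.
\item By Lemma~\ref{lem:binomial-solvability}, $Y^m=c$ has $d_m$ solutions if $c^{(q-1)/d_m}=1$, and none otherwise.
\item In a cyclic group of order $q-1$, the condition $c^{(q-1)/d_m}=1$ holds exactly when $c$ is a $d_m$-th power. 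So the two counts agree.
\end{enumerate}

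\emph{Expected obstacle.} The only delicate point is identifying the characters with $\psi^{d_m}=\psi_0$ as the dual of $\Fq^\times/(\Fq^\times)^{d_m}$ and counting them correctly. Together with the convention at $0$, this is the main thing to check.
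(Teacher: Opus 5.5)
Your proposal is correct and follows essentially the same route as the paper: reduce root by root via Proposition~\ref{thm:rootwise-reduction}, apply the character-sum count for $Y^m=c$ (using the convention for $\psi(0)$ when $c=0$), and split off the trivial character $\psi_0$. The one difference is that you actually prove the extension of Lemma~\ref{multcharsol} from $\FF_p$ to $\Fq$, by identifying the characters with $\psi^{d_m}=\psi_0$ as the dual of $\Fq^\times/(\Fq^\times)^{d_m}$, whereas the paper only asserts that this extension holds ``in the same way.''
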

\begin{proof}
By Theorem~\ref{thm:rootwise-reduction},
$$
N_0'
=
\sum_{\substack{\g\in\Fq\\ P(0,\g)=0}}
\#\{\be\in\Fq\mid \be^m=b(\g)\}.
$$
The multiplicative-character formula in Lemma~\ref{multcharsol} extends in the same way to $\Fq$, with $d_m=\gcd(m,q-1)$. If $b(\g)\neq0$, it gives
$$
\#\{\be\in\Fq\mid \be^m=b(\g)\}
=
\sum_{\psi^{d_m}=\psi_0}\psi(b(\g)).
$$
If $b(\g)=0$, the equation $\be^m=0$ has the unique solution $\be=0$, and by our convention for multiplicative characters,
$$
\sum_{\psi^{d_m}=\psi_0}\psi(0)=1.
$$
Hence, in all cases,
$$
N_0'
=
\sum_{\substack{\g\in\Fq\\ P(0,\g)=0}}
\sum_{\psi^{d_m}=\psi_0}\psi(b(\g)).
$$
Separating the trivial character $\psi_0$, which contributes $1$ for each of the $r$ roots of $P(0,Z)$, gives
$$
N_0'
=
r+
\sum_{\substack{\psi^{d_m}=\psi_0\\ \psi\neq\psi_0}}
\sum_{\substack{\g\in\Fq\\ P(0,\g)=0}}
\psi(b(\g)).
$$
\end{proof}
\noindent
Now we will look at the case when $Q(0,Y,Z)$ is a binomial.
\begin{theorem}\label{thm:binomial-two-term}
Suppose that $P(0,Z)$ has exactly $r$ distinct roots $\g_1,\ldots,\g_r$ in $\Fq$, and set $I=\{\g_1,\ldots,\g_r\}.$ Let $Q(X,Y,Z)=XQ_1(X,Y,Z)+a(Z)Y^m-b(Z)Y^n,$ where
$
Q_1(X,Y,Z)\in\Fq[X,Y,Z],
$ $
a(Z), ~b(Z)\in\Fq[Z],
$
and $0\leq n<m\leq q-1$.  For $\g\in I$, set
$$
N_\g=\#\{\be\in\Fq\mid Q(0,\be,\g)=0\}.
$$
Then $N_0'=\sum_{\g\in I}N_\g$. Moreover:
\begin{enumerate}
\item If $n>0$, set $d_{m-n}=\gcd(m-n,q-1)$, and
$$
\begin{aligned}
n_{0}&=\#\{\g\in I\mid a(\g)=0,\ b(\g)=0\},\\
n_1&=\#\left\{\g\in I\mid a(\g)b(\g)\neq 0,\ 
\left(\frac{b(\g)}{a(\g)}\right)^{(q-1)/d_{m-n}}=1\right\}.
\end{aligned}
$$
Then
$
N_0'=r+(q-1)n_{0}+d_{m-n}n_1. 
$

\item If $n=0$, set $d_m =\gcd(m,q-1)$, and
$$
\begin{aligned}
n_{0}&=\#\{\g\in I\mid a(\g)=0,\ b(\g)=0\},\\
n_{0}'&=\#\{\g\in I\mid a(\g)\neq 0,\ b(\g)=0\},\\
n_1&=\#\left\{\g\in I\mid a(\g)b(\g)\neq 0,\ 
\left(\frac{b(\g)}{a(\g)}\right)^{(q-1)/d_m}=1\right\}.
\end{aligned}
$$
Then
$
N_0'=qn_{0}+n_{0}'+d_mn_1.
$
\end{enumerate}
\end{theorem}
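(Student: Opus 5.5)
The plan is to reduce to the one-variable binomial count of Theorem~\ref{thm:single-binomial-two-term}, applied separately at each root of $P(0,Z)$, and then sum the results. The identity $N_0'=\sum_{\g\in I}N_\g$ follows at once from Proposition~\ref{thm:rootwise-reduction}. The reason is that $Q(0,Y,Z)=a(Z)Y^m-b(Z)Y^n$, because the term $XQ_1$ vanishes at $X=0$.

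Fix $\g\in I$. Then $Q(0,Y,\g)=a(\g)Y^m-b(\g)Y^n$ is a polynomial in $Y$ with constant coefficients $a(\g),b(\g)\in\Fq$ and exponents $0\le n<m\le q-1$. This is exactly the setting of Theorem~\ref{thm:single-binomial-two-term} with $a=a(\g)$ and $b=b(\g)$. We do not need to assume monicity there, since that theorem allows arbitrary $a,b$, including $a=0$. So $N_\g$ is read off from that theorem.

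For $n>0$, part (i) of Theorem~\ref{thm:single-binomial-two-term} gives three cases:
\begin{itemize}
\item $N_\g=q$ if $a(\g)=b(\g)=0$;
\item $N_\g=1+d_{m-n}$ if $a(\g)b(\g)\ne0$ and $(b(\g)/a(\g))^{(q-1)/d_{m-n}}=1$;
\item $N_\g=1$ otherwise.
\end{itemize}
We write this uniformly as
$$N_\g=1+(q-1)\,[\g\text{ counted in }n_0]+d_{m-n}\,[\g\text{ counted in }n_1].$$
The two indicator conditions are mutually exclusive. Summing over the $r$ elements of $I$ then gives $r+(q-1)n_0+d_{m-n}n_1$.

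For $n=0$, part (ii) gives four cases:
\begin{itemize}
\item $N_\g=q$ when $a(\g)=b(\g)=0$;
\item $N_\g=1$ when $a(\g)\neq0$ and $b(\g)=0$;
\item $N_\g=d_m$ when $a(\g)b(\g)\neq0$ and the power condition holds;
\item $N_\g=0$ otherwise. This includes $a(\g)=0\ne b(\g)$, where the equation becomes the nonzero constant $-b(\g)=0$.
\end{itemize}
These four classes partition $I$, and their sizes are $n_0$, $n_0'$, $n_1$ and the remainder. Summing gives $qn_0+n_0'+d_mn_1$.

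No step is a genuine obstacle. The only points needing care are the following. First, the cases of Theorem~\ref{thm:single-binomial-two-term} must partition $I$ exactly as the sets $n_0,n_0',n_1$ do, with the ``otherwise'' classes contributing $1$ in case (i) and $0$ in case (ii). Second, in case (i) the root $\be=0$ must be counted once for every $\g$, which is what produces the term $r$. Third, the count $q$ in the degenerate case $a(\g)=b(\g)=0$ must be split as $1+(q-1)$ in case (i), but kept as $q$ in case (ii), where $\be=0$ is not automatically a root.
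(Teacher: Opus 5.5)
Your proposal is correct and follows essentially the paper's proof: reduce via Proposition~\ref{thm:rootwise-reduction} to the per-root counts $N_\g$, evaluate each from the binomial case table, and sum over the partition of $I$. The only difference is cosmetic. You cite Theorem~\ref{thm:single-binomial-two-term} with $a=a(\g)$ and $b=b(\g)$, whereas the paper redoes that same case analysis inline using Lemma~\ref{lem:binomial-solvability}.
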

\begin{proof}
By Theorem~\ref{thm:rootwise-reduction},
$$
N_0'=\sum_{\g\in I}N_\g.
$$
We compute $N_\g$ in the two cases.

\begin{enumerate}
\item Suppose that $n>0$. Then
$$
Q(0,Y,\g)=a(\g)Y^m-b(\g)Y^n
=
Y^n\bigl(a(\g)Y^{m-n}-b(\g)\bigr).
$$ Hence $Y=0$ is always a solution.

\medskip
\noindent
If $a(\g)=b(\g)=0$, then $Q(0,Y,\g)=0$ for every $Y\in\Fq$, so $N_\g=q$. If exactly one of $a(\g)$ and $b(\g)$ is zero, then $Y=0$ is the only solution, so $N_\g=1$.

\medskip
\noindent
Now suppose that $a(\g)b(\g)\neq0$. The nonzero solutions are precisely the solutions of
$$
Y^{m-n}=\frac{b(\g)}{a(\g)}.
$$
Since $d_{m-n}=\gcd(m-n,q-1)$, Lemma~\ref{lem:binomial-solvability} gives exactly $d_{m-n}$ nonzero solutions if
$
\left(\frac{b(\g)}{a(\g)}\right)^{(q-1)/d_{m-n}}=1,
$
and no nonzero solution otherwise. 

\medskip
\noindent
Thus, for $n>0$,
$$
N_\g=
\begin{cases}
q, & a(\g)=0,\ b(\g)=0,\\
1+d_{m-n}, & a(\g)b(\g)\neq 0 \text{ and }
\left(\dfrac{b(\g)}{a(\g)}\right)^{(q-1)/d_{m-n}}=1,\\
1, & \text{otherwise}.
\end{cases}
$$
Summing over $\g\in I$ gives
\begin{align*}
N_0'
&= n_0q+n_1(1+d_{m-n})+(r-n_0-n_1)\\
&=r+(q-1)n_0+d_{m-n}n_1.
\end{align*}

\item Suppose that $n=0$. Then
$$
Q(0,Y,\g)=a(\g)Y^m-b(\g).
$$

\noindent
If $a(\g)=b(\g)=0$, then the equation is identically zero, so $N_\g=q$. If $a(\g)\neq 0$ and $b(\g)=0$, then $Y=0$ is the unique solution, so $N_\g=1$. If $a(\g)=0$ and $b(\g)\neq 0$, then the equation has no solution.

\medskip
\noindent
Now suppose that $a(\g)b(\g)\neq 0$. Then the solutions are precisely the solutions of $Y^m=\frac{b(\g)}{a(\g)}.$ Since $d_m=\gcd(m,q-1)$, Lemma~\ref{lem:binomial-solvability} gives exactly $d_m$ nonzero solutions if $\left(\frac{b(\g)}{a(\g)}\right)^{(q-1)/d_m}=1,$ and no solution otherwise.

\medskip
\noindent
Thus, for $n=0$,
$$
N_\g=
\begin{cases}
q, & a(\g)=0,\ b(\g)=0,\\
1, & a(\g)\neq 0,\ b(\g)=0,\\
d_m, & a(\g)b(\g)\neq 0 \text{ and }
\left(\dfrac{b(\g)}{a(\g)}\right)^{(q-1)/d_m}=1,\\
0, & \text{otherwise}.
\end{cases}
$$
\noindent
Summing over $\g\in I$ gives
$$
N_0'
=
qn_0+n_0'+d_mn_1.
$$
\end{enumerate}
\end{proof}
\noindent Taking $a(Z)=1$ in Theorem~\ref{thm:binomial-two-term} gives the following.

\begin{corollary}\label{cor:binomial-special}
Suppose that $P(0,Z)$ has exactly $r$ distinct roots $\g_1,\ldots,\g_r$ in $\Fq$, and set $I=\{\g_1,\ldots,\g_r\}$. Let $Q(0,Y,Z)=Y^m-b(Z)Y^n,$ where $0\le n<m\le q-1$ and $b(Z)\in\Fq[Z]$. Thus, for every $\g\in I$, the polynomial $Q(0,Y,\g)=Y^m-b(\g)Y^n$ is a binomial in $Y$.  Then the following hold.

\begin{enumerate}
\item If $n>0$, set $d_{m-n}=\gcd(m-n,q-1)$ and $n_1=\#\{\g\in I\mid b(\g)\neq0,\ b(\g)^{(q-1)/d_{m-n}}=1\}.$ Then $N_0'=r+d_{m-n}n_1$.

\item If $n=0$, set $d_m=\gcd(m,q-1)$ and
\begin{align*}
n_0'&=\#\{\g\in I\mid b(\g)=0\},\\
n_1&=\#\{\g\in I\mid b(\g)\neq0,\ b(\g)^{(q-1)/d_m}=1\}.
\end{align*}
Then $N_0'=n_0'+d_mn_1$.
\end{enumerate}
\end{corollary}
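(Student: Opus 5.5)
The plan is to specialize Theorem~\ref{thm:binomial-two-term} to $a(Z)=1$. First we check that the hypotheses there are met. Since $Q(0,Y,Z)=Y^m-b(Z)Y^n$, we may write $Q(X,Y,Z)=XQ_1(X,Y,Z)+Y^m-b(Z)Y^n$ for some $Q_1$, by separating the terms divisible by $X$. Only the reduced polynomial $Q(0,Y,\g)$ enters the count, via Proposition~\ref{thm:rootwise-reduction}, so the earlier theorem applies with $a(Z)\equiv 1$. In particular $a(\g)=1\neq 0$ for every $\g\in I$.

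For part (i), with $n>0$, we compute the quantities of Theorem~\ref{thm:binomial-two-term}(i).
\begin{itemize}
\item $n_0=\#\{\g\in I\mid a(\g)=0,\ b(\g)=0\}=0$, since $a(\g)=1$.
\item The condition $a(\g)b(\g)\neq0$ reduces to $b(\g)\neq0$, and $b(\g)/a(\g)=b(\g)$. So $n_1$ coincides with the set in the corollary.
\end{itemize}
Substituting gives $N_0'=r+(q-1)\cdot 0+d_{m-n}n_1=r+d_{m-n}n_1$.

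For part (ii), with $n=0$, we again have $n_0=0$. The set $n_0'=\#\{\g\in I\mid a(\g)\neq0,\ b(\g)=0\}$ becomes $\#\{\g\in I\mid b(\g)=0\}$, and $n_1$ simplifies exactly as before. Hence $N_0'=q\cdot0+n_0'+d_mn_1$.

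As a sanity check, one can verify $N_\g$ directly from Lemma~\ref{lem:binomial-solvability}. For $n>0$, $N_\g=1+d_{m-n}$ or $1$; for $n=0$, $N_\g=1$, $d_m$, or $0$. There is no real obstacle. The only point needing care is that the corollary specifies just $Q(0,Y,Z)$, so we justify the decomposition $Q=XQ_1+Q(0,Y,Z)$ before invoking the theorem.
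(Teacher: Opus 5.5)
Your proposal is correct and takes the same route as the paper: the corollary is obtained by setting $a(Z)=1$ in Theorem~\ref{thm:binomial-two-term}, which forces $n_0=0$ and reduces $n_0'$ and $n_1$ to the stated sets. Your check that $Q$ can always be written as $XQ_1+Q(0,Y,Z)$ is a sound extra justification, since that theorem places no restriction on $Q_1$.
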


\begin{remark}
If
$
P(0,Z)=Z^q-Z,
~
Q(0,Y,Z)=Y^2-ZY,
$
then Corollary~\ref{cor:binomial-special} gives
$
N_0'=2q-1.
$
Hence
$
|S_{d_1,d_2}(\Fq)|=3q^2-2q,
$
which is the $q$-th octagonal number, corresponding to the $8$-gonal numbers discussed in Section~\ref{S7}.
\end{remark} 
\subsection{Special permutation-polynomial families}

\noindent
\noindent
We next consider special families for which $Q(0,Y,\g)$ is a permutation
polynomial of $\Fq$ for every root $\g$ of $P(0,Z)$.

\begin{defi}
A polynomial $f\in\Fq[X]$ is called a permutation polynomial of $\Fq$ if the map
$$
f:\Fq\to\Fq,\qquad c\mapsto f(c),
$$
is a permutation of $\Fq$.
\end{defi}

\begin{lemma}\label{lem:perm-poly-criteria}
Let $a,b,c\in\Fq.$ The following polynomials are permutation polynomials of $\Fq$ under the stated hypotheses.
$$
\renewcommand{\arraystretch}{1.25}
\begin{array}{|c|c|c|}
\hline
\text{Case} & f(Y) & \text{Hypotheses} \\
\hline
(1)\ \text{\cite[Proposition 2.7]{AFF}}
&
Y^i+aY^j+b
&
\begin{array}{c}
i>j\ge 1,\ \gcd(i-j,q-1)=1,\\
a=0,\ \gcd(i,q-1)=1
\end{array}
\\
\hline
(2)\ \text{\cite[Corollary 2.10]{AFF}}
&
Y^3+aY^2+bY+c
&
\begin{array}{c}
\operatorname{char}(\Fq)\neq 3,\\
a^2=3b,\ q\equiv 2 \pmod 3
\end{array}
\\
\hline
(3)\ \text{\cite[Corollary 2.14]{AFF}}
&
Y^3+aY^2+bY+c
&
\begin{array}{c}
\operatorname{char}(\Fq)=3,\\
a=0,\ \eta(-b)\neq 1
\end{array}
\\
\hline
(4)\ \text{\cite[Theorem 2.15]{AFF}}
&
Y^i+aY+b
&
\begin{array}{c}
i>1,\ i\text{ is not a power of }\operatorname{char}(\Fq),\\
q\ge (i^2-4i+6)^2,\ a=0,\ \\ \gcd(i,q-1)=1
\end{array}
\\
\hline
(5)\ \text{\cite[Exercise 2.4]{AFF}}
&
Y^{p^s}-aY^{p^r}
&
\begin{array}{c}
q=p^n,\ s>r\ge 0,\\
a\text{ is not a }(p^s-p^r)\text{-th power in }\Fq,\\
\text{equivalently, } \\ a\text{ is not a }(p^{s-r}-1)\text{-th power in }\Fq
\end{array}
\\
\hline
\end{array}
$$
Here $\eta$ denotes the quadratic character of $\Fq$.
\end{lemma}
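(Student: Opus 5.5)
Each of the five cases can be verified directly, without appealing to the general theory in \cite{AFF}. The method is to reduce each polynomial, by an affine change of variable or by additivity, to one of two basic shapes. The first is a power map $Y\mapsto Y^k$ composed with translations, which is a bijection of $\Fq$ if and only if $\gcd(k,q-1)=1$ (use Lemma~\ref{lem:binomial-solvability}, or the cyclicity of $\Fq^\times$). The second is an $\F_p$-linear (linearized) map, which is a bijection if and only if its kernel in $\Fq$ is $\{0\}$. Translations $Y\mapsto Y+c$ and adding a constant are permutations of $\Fq$, so they do not affect the conclusion.

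\textbf{Cases (1) and (4).} Since $a=0$, in both cases $f(Y)=Y^i+b$. This is the power map $Y\mapsto Y^i$ followed by translation by $b$, and $\gcd(i,q-1)=1$ is assumed, so $f$ permutes $\Fq$. The remaining hypotheses ($\gcd(i-j,q-1)=1$ in (1), and the bound $q\ge(i^2-4i+6)^2$ in (4)) are not needed once $a=0$; I would note this explicitly.

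\textbf{Case (2).} Since $\operatorname{char}\Fq\neq3$, we may divide by $3$ and complete the cube:
$$\Bigl(Y+\tfrac a3\Bigr)^3=Y^3+aY^2+\tfrac{a^2}{3}Y+\tfrac{a^3}{27}.$$
Using $a^2=3b$, this gives $f(Y)=(Y+a/3)^3+c-a^3/27$. So $f$ is a translate composed with the cube map. The condition $q\equiv2\pmod 3$ is equivalent to $\gcd(3,q-1)=1$, so the cube map is a bijection and hence so is $f$.

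\textbf{Case (3).} In characteristic $3$ with $a=0$, we have $f(Y)=L(Y)+c$ with $L(Y)=Y^3+bY$. Because $Y\mapsto Y^3$ is additive in characteristic $3$, $L$ is $\F_3$-linear, so $f$ is a permutation exactly when $\ker L=\{0\}$. A nonzero $\be$ lies in the kernel if and only if $\be^2=-b$. Such a $\be$ exists if and only if $-b$ is a nonzero square, that is, $\eta(-b)=1$. (For $q=3^r$ odd, $\eta$ is defined, and $b=0$ gives $\eta(0)=0$.) The hypothesis $\eta(-b)\neq1$ therefore gives a trivial kernel.

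\textbf{Case (5).} The polynomial $L(Y)=Y^{p^s}-aY^{p^r}$ is $\F_p$-linear. A nonzero $\be$ lies in $\ker L$ if and only if $a=\be^{p^s-p^r}$. Hence $L$ is injective, and so bijective, precisely when $a$ is not a $(p^s-p^r)$-th power of a nonzero element. If $a=0$, then $a=0^{p^s-p^r}$ is a $(p^s-p^r)$-th power, so this value of $a$ is excluded by the hypothesis; in any event $Y^{p^s}$ permutes $\Fq$. For the stated equivalence, write
$$\be^{p^s-p^r}=\bigl(\be^{p^r}\bigr)^{p^{s-r}-1}.$$
Since Frobenius $\be\mapsto\be^{p^r}$ is a bijection of $\Fq^\times$, the set of $(p^s-p^r)$-th powers equals the set of $(p^{s-r}-1)$-th powers.

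\textbf{Main obstacle.} The only delicate points are bookkeeping at the edges: the value $a=0$ in (5), the value $b=0$ in (3), and confirming that $q\equiv2\pmod3$ already forces $\operatorname{char}\Fq\ne3$ in (2). I would address each of these explicitly in the write-up rather than defer to \cite{AFF}.
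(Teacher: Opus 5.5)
Your argument is correct in all five cases. It takes a different route from the paper, which gives no proof of this lemma: each row of the table is simply quoted from \cite{AFF}. You instead reduce everything to two elementary facts, together with completing the cube in (2). The first is that $Y\mapsto Y^k$ permutes $\Fq$ exactly when $\gcd(k,q-1)=1$. The second is that an $\F_p$-linear map $\Fq\to\Fq$ is bijective exactly when its kernel is trivial.

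The citation buys brevity, and it ties the rows to results that appear to be formulated more broadly. For instance, the side conditions $\gcd(i-j,q-1)=1$ in (1), and $q\ge(i^2-4i+6)^2$ with $i$ not a power of $\operatorname{char}(\Fq)$ in (4), only matter when $a\neq0$. Your route gives a self-contained proof of exactly the sufficiency that Theorem~\ref{thm:permcriterion} uses. It also shows that once $a=0$ is imposed, rows (1) and (4) collapse to the single condition $\gcd(i,q-1)=1$, and row (2) is just a translate of the cube map. In addition, it settles the edge cases $b=0$ in (3) and $a=0$ in (5). It justifies the stated equivalence in (5) via $\be^{p^s-p^r}=(\be^{p^r})^{p^{s-r}-1}$ and the bijectivity of Frobenius.

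One small precision: in (2) you only need the implication $q\equiv 2\pmod 3\Rightarrow\gcd(3,q-1)=1$. The reverse implication holds only because $\operatorname{char}(\Fq)\neq3$ excludes $q\equiv0\pmod 3$.
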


\begin{theorem}\label{thm:permcriterion}
Suppose that $P(0,Z)$ has exactly $r$ distinct roots in $\Fq$. Assume that $Q(0,Y,Z) \allowbreak =f(Y)\in\Fq[Y],$ and that $f(Y)$ satisfies one of the cases $(1)$--$(5)$ of Lemma~\ref{lem:perm-poly-criteria}. Then $N_0'=r.$
\end{theorem}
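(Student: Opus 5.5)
The plan is to reduce the count to the rootwise decomposition of Proposition~\ref{thm:rootwise-reduction} and then use the permutation property of $f$. Since $Q(0,Y,Z)=f(Y)$ does not involve $Z$, for every root $\g$ of $P(0,Z)$ the reduced polynomial $Q(0,Y,\g)$ is the same polynomial $f(Y)$. So Proposition~\ref{thm:rootwise-reduction} gives $N_0'=j_1+\cdots+j_r$, where each $j_i$ equals the common value
$$
j:=\#\{\be\in\Fq\mid f(\be)=0\}.
$$
Hence $N_0'=rj$, and it suffices to show $j=1$.

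The key step is this: under any of the hypotheses $(1)$--$(5)$ of Lemma~\ref{lem:perm-poly-criteria}, $f$ is a permutation polynomial of $\Fq$. Then the map $\be\mapsto f(\be)$ is a bijection $\Fq\to\Fq$, so the value $0$ has exactly one preimage, and $j=1$. Equivalently, one could note that $\gcd(f(Y),Y^q-Y)$ has degree $1$ and invoke Proposition~\ref{prop:gcd-method-double}, but the bijection argument is more direct. Combining the two steps gives $N_0'=r\cdot 1=r$.

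The only point needing care is that Lemma~\ref{lem:perm-poly-criteria} is quoted from \cite{AFF}, so the permutation property is taken as given. The argument is uniform across the five cases and requires no case split; I would simply remark that each case yields a permutation polynomial. I would also note, as a consequence via Corollary~\ref{cor:double-total-count}, that $|S_{d_1,d_2}(\Fq)|=q(q-1+r)$. I do not expect a genuine obstacle here.
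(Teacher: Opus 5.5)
Your proposal is correct and is essentially the paper's own proof. Both observe that $Q(0,Y,\g)=f(Y)$ for every root $\g$ of $P(0,Z)$. Both then use that a permutation polynomial of $\Fq$ vanishes at exactly one point, and conclude $N_0'=r$ via Proposition~\ref{thm:rootwise-reduction}.
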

\begin{proof}
Since $Q(0,Y,Z)=f(Y)\in\Fq[Y]$, for every $\g\in\Fq$ satisfying $P(0,\g)=0$, we have $Q(0,Y,\g)=f(Y).$ By hypothesis and Lemma~\ref{lem:perm-poly-criteria}, $f(Y)$ is a permutation polynomial of $\Fq$. Hence the equation $Q(0,Y,\g)=0$ has exactly one solution in $\Fq$ for every root $\g$ of $P(0,Z)$. Since $P(0,Z)$ has exactly $r$ distinct roots in $\Fq$, Proposition~\ref{thm:rootwise-reduction} gives $N_0'=r.$
\end{proof}

\begin{lemma}[\cite{LM13}, Theorem 7.7]\label{permutationzero}
A polynomial $f\in\Fq[X]$ is a permutation polynomial of $\Fq$ if and only if
$$
\sum_{a\in\Fq}\chi(f(a))=0
$$
for every nontrivial additive character $\chi$ of $\Fq$.
\end{lemma}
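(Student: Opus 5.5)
We prove both directions of the equivalence using orthogonality of additive characters together with the fact that every character of $\Fq$ has the form $\chi_b(a)=\chi_1(ba)$. For $c\in\Fq$ write
$$
N_c=\#\{a\in\Fq\mid f(a)=c\}.
$$
Then $f$ is a permutation polynomial if and only if $N_c=1$ for every $c$. For any additive character $\chi$, grouping the sum by values of $f$ gives the identity
$$
\sum_{a\in\Fq}\chi(f(a))=\sum_{c\in\Fq}N_c\,\chi(c),
$$
and both directions follow from it.

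\emph{($\Rightarrow$)} Suppose $f$ permutes $\Fq$. Then $N_c=1$ for all $c$, so the sum equals $\sum_{c\in\Fq}\chi(c)$. Since $\chi$ is nontrivial, this vanishes by \eqref{5.1} of Theorem~\ref{5.4}, applied to the additive group of $\Fq$.

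\emph{($\Leftarrow$)} Suppose $\sum_{a}\chi(f(a))=0$ for every nontrivial $\chi$. Fix $c\in\Fq$. We use the same orthogonality expansion as in the proof of Theorem~\ref{5.6}:
$$
N_c=\frac1q\sum_{a\in\Fq}\sum_{\chi}\chi(f(a)-c)=\frac1q\sum_{\chi}\overline{\chi(c)}\sum_{a\in\Fq}\chi(f(a)).
$$
Here the outer sum runs over all $q$ additive characters, which is justified by Theorem~\ref{5.5}. By hypothesis every nontrivial term vanishes. The trivial character contributes $\frac1q\cdot q=1$. Hence $N_c=1$ for every $c$, so $f$ is surjective, and therefore bijective on the finite set $\Fq$.

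The only point needing care is the inversion step in ($\Leftarrow$). It requires that the characters separate points, that is, the second orthogonality relation \eqref{5.2}. Theorem~\ref{5.6} already established this for additive characters, so it can be cited verbatim with $a=c$. There is no real obstacle; the argument is the standard one from \cite{LM13}.
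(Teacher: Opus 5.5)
Your proof is correct and is the standard argument. The paper gives no proof of its own and simply cites \cite{LM13}, where the forward direction is $\sum_{a}\chi(f(a))=\sum_{c}\chi(c)=0$ and the converse is the orthogonality expansion $N_c=\frac{1}{q}\sum_{\chi}\overline{\chi(c)}\sum_{a}\chi(f(a))=1$; obtaining the converse by applying Theorem~\ref{5.6} with every nontrivial $S(\chi)=0$ is exactly that computation as already recorded in this paper.
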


\begin{prop}\label{prop:power-and-shifted-power}
Suppose that $P(0,Z)$ has exactly $r$ distinct roots in $\Fq$, and let
$m\geq2$ satisfy $\gcd(m,q-1)=1$. Let
$Q_1(X,Y,Z)\in\Fq[X,Y,Z]$ and $b(Z)\in\Fq[Z]$, with
$\deg_Y Q_1(X,Y,Z)<m$. Then $N_0'=r$ in each of the following cases:
\begin{enumerate}
\item
$Q(X,Y,Z)=XQ_1(X,Y,Z)+Y^m+b(Z),$
\item
$Q(X,Y,Z)=XQ_1(X,Y,Z)+(Y+b(Z))^m.$
\end{enumerate}
\end{prop}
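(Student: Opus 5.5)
Both cases reduce, via Proposition~\ref{thm:rootwise-reduction}, to showing that for every root $\g$ of $P(0,Z)$ in $\Fq$ the one-variable equation $Q(0,Y,\g)=0$ has exactly one solution $\be\in\Fq$. Setting $X=0$ kills the term $XQ_1(X,Y,Z)$; the hypothesis $\deg_Y Q_1<m$ only ensures that $Q$ stays monic in $Y$ of degree $m$, as the definition of a double Danielewski surface requires. In case (i) we get $Q(0,Y,\g)=Y^m+b(\g)$, and in case (ii) we get $Q(0,Y,\g)=(Y+b(\g))^m$. If each root contributes exactly one $\be$, then summing over the $r$ distinct roots gives $N_0'=r$.

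The key fact is that $Y\mapsto Y^m$ is a bijection of $\Fq$ when $\gcd(m,q-1)=1$. On $\Fq^\times$, which is cyclic of order $q-1$, raising to the $m$-th power is an automorphism, and $0\mapsto0$ is the only preimage of $0$. Equivalently, Lemma~\ref{lem:binomial-solvability} with $d_m=1$ says that $Y^m=c$ has exactly one solution for every $c\in\Fq^\times$, because the condition $c^{q-1}=1$ always holds. The case $c=0$ gives only $Y=0$.

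In case (i), fix a root $\g$ and put $c=-b(\g)$. The equation $\be^m=c$ has exactly one solution: $\be=0$ if $c=0$, and the unique $m$-th root given by Lemma~\ref{lem:binomial-solvability} if $c\neq0$. Alternatively, this follows from Theorem~\ref{prop:Ym-aZm}(i) with $b$ replaced by $-b$: since $d_m=1$, every nonzero value satisfies the power condition, so $n_0+n_1=r$ and $N_0'=r$. In case (ii), $(\be+b(\g))^m=0$ holds if and only if $\be+b(\g)=0$, because $\Fq$ is a field. So the unique solution is $\be=-b(\g)$, and the hypothesis $\gcd(m,q-1)=1$ is not even needed here.

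There is no real obstacle. The only point needing care is that the value $b(\g)$ changes with the root $\g$, so uniqueness must be checked root by root rather than by citing Theorem~\ref{thm:permcriterion}, which assumes $Q(0,Y,Z)$ is independent of $Z$. Each $Q(0,Y,\g)$ is still a translate or shift of the permutation polynomial $Y^m$, so the argument above applies uniformly to every root.
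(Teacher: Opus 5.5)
Your argument is correct, but it takes a different route from the paper. The paper works through the additive-character formula of Theorem~\ref{thm:additive-master-double}. In case (i) it factors out $\chi_1(sb(\g))$ and shows that $\sum_{\be\in\Fq}\chi_1(s\be^m)=0$ for each $s\in\Fq^\times$, by combining the permutation property of $Y^m$ with the character criterion of Lemma~\ref{permutationzero}. In case (ii) the substitution $u=\be+b(\g)$ reduces to the same vanishing sum.

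You instead count root by root via Proposition~\ref{thm:rootwise-reduction}. You use only that $Y\mapsto Y^m$ is a bijection of $\Fq$, equivalently Lemma~\ref{lem:binomial-solvability} with $d_m=1$, or Theorem~\ref{prop:Ym-aZm}(i) with $b$ replaced by $-b$.

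Your route is more elementary, and it is also sharper. It shows that case (ii) holds for every $m\geq1$ with no condition on $\gcd(m,q-1)$, since $(\be+b(\g))^m=0$ forces $\be=-b(\g)$. The paper's proof does use the gcd hypothesis there, because it requires each inner sum $\sum_{u}\chi_1(su^m)$ to vanish separately. Your remark that Theorem~\ref{thm:permcriterion} cannot be cited directly, since $b(\g)$ varies with the root, is also accurate.

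What the paper's approach buys is uniformity with its character-sum framework. The same decomposition, with the vanishing sum replaced by the Weil-type estimate of Lemma~\ref{thm:binomial-weil}, gives the bound for $Q=XQ_1+Y^m+b(Z)$ in Section~\ref{S5} when $\gcd(m,q-1)>1$. For the exact count $N_0'=r$ stated here, though, your direct argument is the cleaner one.
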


\begin{proof}
By Theorem \ref{thm:additive-master-double}, in case \textup{(i)},
$$
N_0'
=
r+\frac{1}{q}
\sum_{s\in \mathbb F_q^\times}
\sum_{\substack{\gamma\in \mathbb F_q\\ P(0,\gamma)=0}}
\chi_1(sb(\gamma))
\sum_{\beta\in \mathbb F_q}\chi_1(s\beta^m).
$$
Since $\gcd(m,q-1)=1$, the polynomial $Y^m$ is a permutation polynomial of $\mathbb F_q$. For each $s\in\Fq^\times$, the map
$a\mapsto\chi_1(sa)$ is a nontrivial additive character of $\Fq$.
 Hence, by Lemma \ref{permutationzero},
$$
\sum_{\beta\in \mathbb F_q}\chi_1(s\beta^m)=0
\qquad
\text{for every } s\in \mathbb F_q^\times .
$$
Therefore, the sum vanishes and we get $N_0'=r$.
\noindent
In case \textup{(ii)}, Theorem \ref{thm:additive-master-double} gives
$$
N_0'
=
r+\frac{1}{q}
\sum_{s\in \mathbb F_q^\times}
\sum_{\substack{\gamma\in \mathbb F_q\\ P(0,\gamma)=0}}
\sum_{\beta\in \mathbb F_q}
\chi_1\bigl(s(\beta+b(\gamma))^m\bigr).
$$
For fixed $\gamma \in \Fq$ such that $P(0,\gamma)=0,$  the substitution $u=\beta+b(\gamma)$ reduces the inner sum to
$$
\sum_{u\in \mathbb F_q}\chi_1(su^m),
$$
which is zero by the same argument. Hence $N_0'=r$.
\end{proof}

\subsection{Quadratic and character-theoretic families}

\noindent
We next consider the case where $Q(0,Y,\g)$ is quadratic in $Y$ and obtain a formula for $N_0'$ using character sums and Gauss sums.

\medskip
\noindent
We first recall the following properties of Gauss sums.

\begin{lemma}[\cite{LM13}, Theorem 5.12]\label{lem:gauss-properties}
Let $\psi$ be a multiplicative character and $\chi$ an additive character of $\Fq$. Then the Gauss sum
$$
G(\psi,\chi)=\sum_{c\in\Fq}\psi(c)\chi(c)
$$
satisfies:
\begin{enumerate}
\item $G(\psi,\chi_{ab})=\overline{\psi(a)}\,G(\psi,\chi_b)$ for $a\in\Fq^\times$ and $b\in\Fq$.
\item $G(\overline{\psi},\chi)=\psi(-1)\,\overline{G(\psi,\chi)}.$
\item $G(\psi,\chi)\,G(\overline{\psi},\chi)=\psi(-1)\,q$ for $\psi\neq\psi_0$ and $\chi\neq\chi_0$.
\end{enumerate}
\end{lemma}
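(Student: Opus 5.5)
Lemma~\ref{lem:gauss-properties} is a standard fact on Gauss sums, quoted from \cite{LM13}. The plan is to verify each of its three parts directly from the definition $G(\psi,\chi_b)=\sum_{c\in\Fq}\psi(c)\chi_1(bc)$. Throughout we use that every additive character has the form $\chi_b$ and that $\psi(0)=0$ for $\psi\neq\psi_0$. Each part is a short manipulation of this defining sum.

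For (i), fix $a\in\Fq^\times$ and note $\chi_{ab}(c)=\chi_b(ac)$. Substitute $u=ac$, which permutes $\Fq$ and fixes $0$. Then $\psi(c)=\psi(a^{-1}u)=\overline{\psi(a)}\,\psi(u)$, so $G(\psi,\chi_{ab})=\overline{\psi(a)}\,G(\psi,\chi_b)$. This uses $\psi(a^{-1})=\overline{\psi(a)}$. The case $\psi=\psi_0$ is consistent with the convention $\psi_0(0)=1$, since then $\psi(a)=1$. For (ii), write $\overline{G(\psi,\chi)}=\sum_c\overline{\psi(c)}\,\overline{\chi(c)}=\sum_c\overline{\psi}(c)\chi(-c)$. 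Substitute $c\mapsto -c$ and use $\overline{\psi}(-c)=\overline{\psi}(-1)\overline{\psi}(c)$ together with $\psi(-1)=\pm1$. This gives $\overline{G(\psi,\chi)}=\psi(-1)G(\overline{\psi},\chi)$, which is equivalent to (ii).

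For (iii), by (ii) it suffices to show $|G(\psi,\chi)|^2=q$ for $\psi\neq\psi_0$ and $\chi\neq\chi_0$. Then $G(\psi,\chi)G(\overline{\psi},\chi)=\psi(-1)\,G(\psi,\chi)\overline{G(\psi,\chi)}=\psi(-1)q$. Expand $|G|^2=\sum_{c,d\in\Fq^\times}\psi(cd^{-1})\chi(c-d)$. Put $c=td$ with $t\in\Fq^\times$ to get $\sum_{t\neq0}\psi(t)\sum_{d\neq0}\chi(d(t-1))$.

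The inner sum equals $q-1$ if $t=1$ and $-1$ otherwise, by Theorem~\ref{5.4} applied to the nontrivial character $d\mapsto\chi(d(t-1))$ with the $d=0$ term removed. Hence $|G|^2=(q-1)-\sum_{t\neq0,1}\psi(t)=(q-1)-(0-\psi(1))=q$, again by Theorem~\ref{5.4} for $\psi$. The only delicate point is this evaluation of the inner sum together with the treatment of the $c=0$ and $d=0$ terms. Those terms vanish because $\psi(0)=0$, and this is exactly where the hypothesis $\psi\neq\psi_0$ enters.
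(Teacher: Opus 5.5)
Your proof is correct. The paper gives no proof of its own and simply quotes \cite{LM13}, Theorem 5.12, so the relevant comparison is with Lidl--Niederreiter, and you follow their argument essentially verbatim: the substitutions $u=ac$ and $c\mapsto -c$ give (i) and (ii), and (iii) comes from (ii) together with $|G(\psi,\chi)|^2=q$, which is proved via $c=td$. You also correctly check that the paper's convention does no harm. The paper sums over all of $\Fq$ with $\psi_0(0)=1$, whereas \cite{LM13} sums over $\Fq^\times$. This difference does not affect (i) or (ii), and it is irrelevant for (iii) because $\psi\neq\psi_0$ there.
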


\begin{remark}\label{rem:quadratic-character-prime}
Following \cite[Chapter 5, Theorem 1]{IrelandRosen1990} and \cite[Exercise 5.2]{IrelandRosen1990}, we recall the following elementary properties of the quadratic character over the prime field $\FF_p$.
\begin{itemize}
\item If $p$ is an odd prime, then $\eta(-1)=(-1)^{(p-1)/2}.$
\item For every $a\in \FF_p$, the equation $x^2=a$ has exactly $1+\eta(a)$ solutions in $\FF_p$.
\end{itemize}
\end{remark}

\begin{theorem}\label{thm:quadratic-N0}
Assume that $q=p$ is an odd prime, and suppose that $P(0,Z)$ has
exactly $r$ distinct roots in $\Fq$. Let
$
Q(X,Y,Z)=XQ_1(X,Y,Z)+Y^2+b(Z)Y+c(Z),
$
where $Q_1(X,Y,Z)\in\Fq[X,Y,Z]$, $b(Z),c(Z)\in\Fq[Z]$, and
$\deg_Y Q_1(X,Y,Z)<2$. For each $\g\in\Fq$, set $D(\g)=c(\g)-\frac{b(\g)^2}{4}.$
Then
$$
N_0'
=
r+\eta(-1)
\sum_{\substack{\g\in\Fq\\ P(0,\g)=0}}
\eta(D(\g)).
$$
\end{theorem}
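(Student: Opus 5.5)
The plan is to work root by root, using Proposition~\ref{thm:rootwise-reduction}, and to count the solutions of a single quadratic in $Y$ by completing the square. The quadratic-character count for $x^2=a$ recalled in Remark~\ref{rem:quadratic-character-prime} then finishes the argument. I will not need the Gauss sum machinery of Lemma~\ref{lem:gauss-properties}.

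First, set $X=0$. Then $Q(0,Y,\g)=Y^2+b(\g)Y+c(\g)$, because the term $XQ_1$ vanishes. By Proposition~\ref{thm:rootwise-reduction},
$$
N_0'=\sum_{\substack{\g\in\Fq\\ P(0,\g)=0}} N_\g,
\qquad
N_\g=\#\{\be\in\Fq\mid \be^2+b(\g)\be+c(\g)=0\}.
$$
Fix a root $\g$ of $P(0,Z)$. Since $p$ is odd, $2$ is invertible in $\F_p$, so we may write
$$
\be^2+b(\g)\be+c(\g)=\Bigl(\be+\tfrac{b(\g)}{2}\Bigr)^2+D(\g).
$$
The substitution $u=\be+b(\g)/2$ is a bijection of $\F_p$. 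Hence $N_\g$ equals the number of $u\in\F_p$ with $u^2=-D(\g)$.

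Next, Remark~\ref{rem:quadratic-character-prime} gives that this number is $1+\eta(-D(\g))$. This holds for every value of $D(\g)$, including $D(\g)=0$, where the count is $1$ and $\eta(0)=0$. By multiplicativity of $\eta$ on $\F_p$, with the convention $\eta(0)=0$ keeping the identity valid when $D(\g)=0$, we get $\eta(-D(\g))=\eta(-1)\eta(D(\g))$. Alternatively, Lemma~\ref{quadraticformula} applies directly to the quadratic with discriminant $\Delta=b(\g)^2-4c(\g)=-4D(\g)$. Since $\eta(4)=1$, this gives $\eta(\Delta)=\eta(-1)\eta(D(\g))$, the same count.

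Finally, summing $N_\g=1+\eta(-1)\eta(D(\g))$ over the $r$ distinct roots of $P(0,Z)$ gives
$$
N_0'=r+\eta(-1)\sum_{\substack{\g\in\Fq\\ P(0,\g)=0}}\eta(D(\g)).
$$
There is no genuine obstacle. The only points needing care are the invertibility of $2$, which is where $p$ odd is used, and the treatment of $\eta(0)$ so that the case $D(\g)=0$ is covered uniformly.
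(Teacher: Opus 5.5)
Your proof is correct, and it takes a more elementary route than the paper's. The paper starts from the additive-character formula of Theorem~\ref{thm:additive-master-double} and completes the square inside the character sum. It then evaluates $\sum_{u}\chi_1(su^2)=\eta(s)G(\eta,\chi_1)$, reduces $\sum_{s\neq 0}\eta(s)\chi_1(sD(\g))$ to $\eta(D(\g))G(\eta,\chi_1)$, and finally invokes $G(\eta,\chi_1)^2=\eta(-1)p$ from Lemma~\ref{lem:gauss-properties}(iii). In the paper, therefore, the factor $\eta(-1)$ arises as the sign of the squared quadratic Gauss sum.

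You instead count roots directly for each $\g$: after completing the square, $N_\g=1+\eta(-D(\g))$. Here $\eta(-1)$ appears simply from moving $D(\g)$ to the other side. Your argument is just the single-variable count of Lemma~\ref{quadraticformula}, with $\Delta=-4D(\g)$, applied root by root.

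The paper itself relies on the same fact you use, namely that $su^2=t$ has $1+\eta(s^{-1}t)$ solutions, but only as an input for evaluating the Gauss sum. Your argument shows the detour through characters is unnecessary here.

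The paper's route has the merit of illustrating the master formula, which is the template for the Weil-type estimates in Section~\ref{S5}, where no direct count is available. Your route is shorter and avoids Gauss-sum identities entirely. It also makes plain that the restriction $q=p$ is inessential, since $x^2=a$ has $1+\eta(a)$ solutions in any $\F_q$ with $q$ odd.
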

\begin{proof}
By Theorem~\ref{thm:additive-master-double},
$$
N_0'
=
r+\frac{1}{p}
\sum_{s\in\Fq^\times}
\sum_{\substack{\g\in\Fq\\ P(0,\g)=0}}
\sum_{\be\in\Fq}\chi_1(sQ(0,\be,\g)).
$$
For each root $\g$ of $P(0,Z)$,
$$
Q(0,\be,\g)
=
\be^2+b(\g)\be+c(\g)
=
\left(\be+\frac{b(\g)}{2}\right)^2+D(\g).
$$
Thus, after the change of variable $u=\be+b(\g)/2$,
$$
\sum_{\be\in\Fq}\chi_1(sQ(0,\be,\g))
=
\chi_1(sD(\g))\sum_{u\in\Fq}\chi_1(su^2).
$$
Since $q=p$ is an odd prime, Remark~\ref{rem:quadratic-character-prime} gives
$$
\#\{u\in\Fq\mid su^2=t\}=1+\eta(s^{-1}t).
$$
Hence
\begin{align*}
\sum_{u\in\Fq}\chi_1(su^2)
&=\sum_{t\in\Fq}\bigl(1+\eta(s^{-1}t)\bigr)\chi_1(t)\\
&=\eta(s)\sum_{t\in\Fq}\eta(t)\chi_1(t)
=\eta(s)G(\eta,\chi_1),
\end{align*}
because $\sum_{t\in\Fq}\chi_1(t)=0$. Therefore
$$
N_0'
=
r+\frac{G(\eta,\chi_1)}{p}
\sum_{\substack{\g\in\Fq\\ P(0,\g)=0}}
\sum_{s\in\Fq^\times}\eta(s)\chi_1(sD(\g)).
$$
For the inner sum, if $D(\g)=0$, then it is $\sum_{s\in\Fq^\times}\eta(s)=0$. If $D(\g)\neq 0$, putting $t=sD(\g)$ gives
$$
\sum_{s\in\Fq^\times}\eta(s)\chi_1(sD(\g))
=
\eta(D(\g))G(\eta,\chi_1).
$$
Since $\eta(0)=0$, this formula holds uniformly for all $\g$. Hence
$$
N_0'
=
r+\frac{G(\eta,\chi_1)^2}{p}
\sum_{\substack{\g\in\Fq\\ P(0,\g)=0}}\eta(D(\g)).
$$
Finally, Lemma~\ref{lem:gauss-properties}(iii) gives $G(\eta,\chi_1)^2=\eta(-1)p$. Therefore
$$
N_0'
=
r+\eta(-1)
\sum_{\substack{\g\in\Fq\\ P(0,\g)=0}}\eta(D(\g)).
$$
\end{proof}

\begin{corollary}\label{cor:quadratic-square-shift}
Assume that $q=p$ is an odd prime. Suppose that $P(0,Z)$ has exactly $r$ distinct roots in $\Fq$ and
$$
Q(X,Y,Z)=XQ_1(X,Y,Z)+(Y+b(Z))^2,
\qquad \deg_Y Q_1(X,Y,Z)<2.
$$
Then $N_0'=r$.
\end{corollary}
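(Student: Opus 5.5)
The plan is to deduce Corollary~\ref{cor:quadratic-square-shift} directly from Theorem~\ref{thm:quadratic-N0} by putting $Q(0,Y,Z)$ into the shape required there. Expanding the square gives
$$
(Y+b(Z))^2=Y^2+2b(Z)Y+b(Z)^2 .
$$
So $Q(X,Y,Z)=XQ_1(X,Y,Z)+Y^2+\tilde b(Z)Y+\tilde c(Z)$ with $\tilde b=2b$ and $\tilde c=b^2$. The remaining hypotheses of the theorem hold as stated: $q=p$ is odd, $\deg_Y Q_1<2$, and $P(0,Z)$ has $r$ distinct roots.

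Next I compute the quantity $D$ from Theorem~\ref{thm:quadratic-N0} for this data:
$$
D(\g)=\tilde c(\g)-\frac{\tilde b(\g)^2}{4}=b(\g)^2-\frac{4b(\g)^2}{4}=0
$$
for every $\g\in\Fq$. This uses that $2$ is invertible in $\F_p$ because $p$ is odd. By the convention $\eta(0)=0$, every term in the sum in Theorem~\ref{thm:quadratic-N0} vanishes, so $N_0'=r+\eta(-1)\cdot 0=r$.

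As a sanity check, and as an alternative proof that avoids characters, I can argue directly. For each root $\g$ of $P(0,Z)$, the equation $(\be+b(\g))^2=0$ has the unique solution $\be=-b(\g)$. Proposition~\ref{thm:rootwise-reduction} then gives $N_0'=r$. This argument does not even need $p$ odd.

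There is no real obstacle. The only point needing care is checking that the shifted square fits the hypotheses of Theorem~\ref{thm:quadratic-N0} exactly: the leading coefficient $1$ in $Y$, the degree condition on $Q_1$, and the invertibility of $2$.
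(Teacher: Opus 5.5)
Your proof is correct and matches the paper's. Expand $(Y+b(Z))^2$, note that $D(\g)=b(\g)^2-(2b(\g))^2/4=0$, and apply Theorem~\ref{thm:quadratic-N0} using $\eta(0)=0$. Your alternative direct argument through Proposition~\ref{thm:rootwise-reduction}, with the unique root $\be=-b(\g)$ for each root $\g$ of $P(0,Z)$, is also valid and in fact shows that $N_0'=r$ over any $\Fq$, without the odd-prime hypothesis.
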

\begin{proof}
Since
$$
Q(0,Y,Z)=(Y+b(Z))^2=Y^2+2b(Z)Y+b(Z)^2,
$$
we get, for every $\g\in\Fq$,
$$
D(\g)=b(\g)^2-\frac{(2b(\g))^2}{4}=0.
$$
Thus $\eta(D(\g))=0$ for all $\g\in\Fq$. By Theorem~\ref{thm:quadratic-N0},
$$
N_0'
=
r+\eta(-1)
\sum_{\substack{\g\in\Fq\\ P(0,\g)=0}}\eta(D(\g))
=
r.
$$
\end{proof}

\subsection{\texorpdfstring{Degree $q-2$}{Degree q-2} and circulant families}

\noindent
We next consider families in which the reduced polynomials have degree
$q-2$. In this case, the K\"onig--Rados theorem gives formulas for
$N_0'$ in terms of ranks of left circulant matrices.

\begin{prop}\label{prop:degree-qminus2-product}
Assume that $q\geq4$. Let
$$
\begin{aligned}
P(X,Z)&=XP_1(X,Z)+a_0+a_1Z+\cdots+a_{q-3}Z^{q-3}+Z^{q-2},\\
Q(X,Y,Z)&=XQ_1(X,Y,Z)+b_0+b_1Y+\cdots+b_{q-3}Y^{q-3}+Y^{q-2},
\end{aligned}
$$
where $a_i,b_j\in\Fq$ for all $i,j$, $\deg_Z P_1(X,Z)<q-2,$ and $\deg_Y Q_1(X,Y,Z)<q-2.$ Let $\rho_P$ and $\rho_Q$ be the ranks of the left circulant matrices associated with $P(0,Z)$ and $Q(0,Y,0)$, respectively. Then
$$
N_0'
=
(q-1-\rho_P+\delta_P)(q-1-\rho_Q+\delta_Q),
$$
where
$$
\delta_P=
\begin{cases}
1, & a_0=0,\\
0, & a_0\neq 0,
\end{cases}
\qquad
\delta_Q=
\begin{cases}
1, & b_0=0,\\
0, & b_0\neq 0.
\end{cases}
$$
\end{prop}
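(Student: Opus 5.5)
The plan is to observe that, at $X=0$, the reduced system decouples completely into a condition on $\g$ alone and a condition on $\be$ alone. Setting $X=0$ kills the terms $XP_1(X,Z)$ and $XQ_1(X,Y,Z)$, so that
$$
P(0,Z)=a_0+a_1Z+\cdots+a_{q-3}Z^{q-3}+Z^{q-2},\qquad
Q(0,Y,Z)=b_0+b_1Y+\cdots+b_{q-3}Y^{q-3}+Y^{q-2}.
$$
The key point is that $Q(0,Y,Z)$ contains no $Z$ at all. Hence $Q(0,Y,\g)=Q(0,Y,0)$ for every $\g\in\Fq$. The hypotheses on $\deg_Z P_1$ and $\deg_Y Q_1$ play no role once $X=0$.

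Next I will apply Proposition~\ref{thm:rootwise-reduction}. If $\g_1,\dots,\g_r$ are the distinct roots of $P(0,Z)$ in $\Fq$, then $N_0'=\sum_i j_i$, where each $j_i$ equals the number $N_Q$ of roots of $Q(0,Y,0)$ in $\Fq$, independently of $i$. Therefore
$$
N_0'=r\cdot N_Q .
$$
So $N_0'$ is simply the product of the root count of $P(0,Z)$ and the root count of $Q(0,Y,0)$, and it remains to express each factor via circulant ranks.

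For the two factors I will invoke Proposition~\ref{prop:KR-count}. Both polynomials have degree exactly $q-2$, so the coefficient-list form $c_0+c_1U+\cdots+c_{q-2}U^{q-2}$ required there applies directly, and the hypothesis $q>3$ holds since $q\ge4$. Applied to $P(0,Z)$, it gives $r=q-1-\rho_P+\delta_P$, where $\delta_P$ records whether $0$ is a root, i.e.\ whether $a_0=0$. Applied to $Q(0,Y,0)$, it gives $N_Q=q-1-\rho_Q+\delta_Q$, with $\delta_Q$ determined by $b_0$. Multiplying the two expressions yields the stated formula.

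There is no real obstacle here. The only step requiring care is checking that $Q(0,Y,Z)$ is genuinely independent of $Z$ (so that the same circulant matrix serves for every root $\g$), and that the degree bound $\le q-2$ needed for the K\"onig--Rados theorem is met, so that no reduction modulo $U^q-U$ as in Theorem~\ref{thm:KR-general-N0_1} is needed.
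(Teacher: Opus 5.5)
Your proposal is correct and follows essentially the same route as the paper. Both arguments note that $Q(0,Y,Z)$ does not involve $Z$, so $N_0'$ is the product of the number of roots of $P(0,Z)$ and the number of roots of $Q(0,Y,0)$. Each factor is then counted by the K\"onig--Rados theorem together with the constant-term test for the root $0$. The only cosmetic difference is that you cite Proposition~\ref{prop:KR-count}, which packages exactly those two facts, whereas the paper applies Theorem~\ref{kongrados} directly.
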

\begin{proof}
Since
$
Q(0,Y,Z)=b_0+b_1Y+\cdots+b_{q-3}Y^{q-3}+Y^{q-2},
$
we have
$
Q(0,\be,\g)=Q(0,\be,0)
$
for all $\be,\g\in\Fq$. Hence
$$
N_0'
=
\#\{\g\in\Fq\mid P(0,\g)=0\} \cdot
\#\{\be\in\Fq\mid Q(0,\be,0)=0\}.
$$
By the K\"onig--Rados theorem (Theorem~\ref{kongrados}), the number of nonzero roots of $P(0,Z)$ in $\Fq$ is $q-1-\rho_P$. Also, $0$ is a root of $P(0,Z)$ if and only if $a_0=0$. 

\medskip
\noindent
Thus,
$
\#\{\g\in\Fq\mid P(0,\g)=0\}=q-1-\rho_P+\delta_P.
$

\medskip
\noindent
Similarly,
$
\#\{\be\in\Fq\mid Q(0,\be,0)=0\}=q-1-\rho_Q+\delta_Q.
$

\medskip
\noindent
Therefore, $N_0'=(q-1-\rho_P+\delta_P)(q-1-\rho_Q+\delta_Q).$
\end{proof}
\section{Bounds}\label{bounds}{\label{S5}}

\noindent
In the preceding section, we obtained exact formulas for $N_0'$ in several special cases. We now derive general bounds for $N_0'$.

\begin{prop}
Let $P(0,Z)\in\Fq[Z]$ have degree $m,$ and let
$Q(0,Y,Z)\in\Fq[Y,Z]$ be monic of degree $n$ in $Y.$ Then $N_0'\le mn.$
\end{prop}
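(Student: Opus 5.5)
The plan is to use the rootwise reduction of Proposition~\ref{thm:rootwise-reduction}, which writes $N_0'$ as a sum of per-root counts. I then bound each factor separately: first the number of roots of $P(0,Z)$, then, for each such root, the number of roots of $Q(0,Y,\g)$.

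\emph{Step 1: roots of $P(0,Z)$.} Since $\deg P(0,Z)=m$, the polynomial $P(0,Z)$ is nonzero. A nonzero polynomial of degree $m$ over the field $\Fq$ has at most $m$ distinct roots, so $r\le m$. Equivalently, by Proposition~\ref{gcdargument}, $r=\deg\gcd(P(0,Z),Z^q-Z)\le m$.

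\emph{Step 2: roots of each specialization.} Fix a root $\g$ of $P(0,Z)$. Because $Q(0,Y,Z)$ is monic of degree $n$ in $Y$, we can write
\[
Q(0,Y,Z)=Y^n+\sum_{i<n}c_i(Z)Y^i.
\]
Its specialization is then
\[
Q(0,Y,\g)=Y^n+\sum_{i<n}c_i(\g)Y^i,
\]
which is still monic of degree exactly $n$, and in particular nonzero. Hence
\[
j_\g=\#\{\be\in\Fq\mid Q(0,\be,\g)=0\}\le n.
\]
This is the only place where monicity is used, and it is the main point to check. Without it the leading coefficient could vanish at $\g$, the specialization could become identically zero, and $j_\g$ could equal $q$ (as in the case $n_0$ of Theorem~\ref{thm:binomial-two-term}). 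Monicity in $Y$ rules this out uniformly in $\g$.

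\emph{Step 3: combine.} By Proposition~\ref{thm:rootwise-reduction},
\[
N_0'=\sum_{i=1}^r j_{\g_i}\le r\,n\le mn.
\]
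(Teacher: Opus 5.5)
Your proposal is correct and follows essentially the same route as the paper's proof: decompose $N_0'$ over the roots of $P(0,Z)$, bound their number by $m$, bound each fiber by $n$, and sum. Your explicit observation that monicity in $Y$ keeps $Q(0,Y,\g)$ of exact degree $n$ for every root $\g$ simply justifies a step the paper states without comment.
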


\begin{proof}
Let $I:=\{\g\in\Fq\mid P(0,\g)=0\}.$ Since $P(0,Z)$ has degree $m$, we have $|I|\le m$. For each fixed $\g\in I,$ the polynomial $Q(0,Y,\g)$ has degree $n$ in $Y$, and hence has at most $n$ roots in $\Fq.$ Therefore,
\begin{align*}
N_0'
&=\sum_{\g\in I}\#\{\be\in\Fq\mid Q(0,\be,\g)=0\}\\
&\le \sum_{\g\in I} n\\
&=|I|\,n\\
&\le mn.
\end{align*}
\end{proof}

\begin{corollary}
Let $r_P:=\deg\bigl(\gcd(P(0,Z),Z^q-Z)\bigr),$ and, for each $\g\in\Fq$ satisfying $P(0,\g)=0$, let $r_Q(\g):=\deg\bigl(\gcd(Q(0,Y,\g),Y^q-Y)\bigr).$ Set
$$
r_Q^{\max}:=\max_{\substack{\g\in\Fq\\ P(0,\g)=0}} r_Q(\g).
$$
Then $r_P$ is the number of distinct roots of $P(0,Z)$ in $\Fq,$ and for each such $\g,$ the number $r_Q(\g)$ is the number of distinct roots of $Q(0,Y,\g)$ in $\Fq.$ Moreover,
$$
N_0' \le r_P\,r_Q^{\max}.
$$
\end{corollary}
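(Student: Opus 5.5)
The plan is to prove both assertions of the corollary directly from the gcd root count and the rootwise decomposition of Proposition~\ref{thm:rootwise-reduction}; no character sums are needed.

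First, we identify $r_P$ and $r_Q(\g)$. Since $Z^q-Z=\prod_{\g\in\Fq}(Z-\g)$ is squarefree, $\gcd(P(0,Z),Z^q-Z)$ is the product of the linear factors $Z-\g$ with $\g\in\Fq$ and $P(0,\g)=0$. Its degree $r_P$ is therefore the number of distinct roots of $P(0,Z)$ in $\Fq$, exactly as in Proposition~\ref{gcdargument}. The same argument in the variable $Y$, applied to $Q(0,Y,\g)$ for a fixed root $\g$, shows that $r_Q(\g)$ is the number of distinct $\be\in\Fq$ with $Q(0,\be,\g)=0$. We should note that $Q(0,Y,\g)$ is nonzero because $Q$ is monic in $Y$, so the gcd is well defined and $r_Q(\g)$ is finite.

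Next, we set $I=\{\g\in\Fq\mid P(0,\g)=0\}$, so that $|I|=r_P$. Proposition~\ref{thm:rootwise-reduction}, or equivalently Proposition~\ref{prop:gcd-method-double}, gives
\begin{equation*}
N_0'=\sum_{\g\in I}r_Q(\g).
\end{equation*}
Each summand satisfies $r_Q(\g)\le r_Q^{\max}$. Summing over the $r_P$ elements of $I$ therefore yields $N_0'\le r_P\,r_Q^{\max}$.

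The only point needing care is the degenerate case $I=\emptyset$, where the maximum defining $r_Q^{\max}$ is over an empty set. In that case $N_0'=0$ by Corollary~\ref{cor:no-root-P-zero}, and the inequality holds under the convention $r_Q^{\max}=0$, or trivially since $r_P=0$. Apart from this, the argument is routine.
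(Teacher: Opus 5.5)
Your proposal is correct and follows essentially the same argument as the paper: identify $r_P$ and $r_Q(\g)$ as root counts via the squarefree factorizations of $Z^q-Z$ and $Y^q-Y$, write $N_0'=\sum_{\g} r_Q(\g)$ over the roots $\g$ of $P(0,Z)$ by the rootwise reduction, and bound each summand by $r_Q^{\max}$. Your extra remarks on the empty-root case and on $Q(0,Y,\g)\neq 0$ (from monicity) are harmless refinements that the paper leaves implicit.
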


\begin{proof}
The number of roots of $P(0,Z)$ in $\Fq$ is $r_P,$ and for each such root $\g,$ the number of $\be\in\Fq$ satisfying $Q(0,\be,\g)=0$ is $r_Q(\g).$ Therefore
$$
N_0'
=
\sum_{\substack{\g\in\Fq\\ P(0,\g)=0}} r_Q(\g)
\le
\sum_{\substack{\g\in\Fq\\ P(0,\g)=0}} r_Q^{\max}
=
r_P\,r_Q^{\max}.
$$
\end{proof}

\noindent
After the degree and gcd bounds, we next derive bounds for $N_0'$ using additive character sums. These bounds apply when suitable estimates for the corresponding exponential sums are available.

\medskip
\noindent
We recall the following standard estimates for additive character sums over finite fields.

\begin{lemma}[\cite{LM13}, Weil's theorem]
Let $f\in\Fq[X]$ be of degree $m\ge 1$ with $\gcd(m,q)=1,$ and let $\chi$ be a nontrivial additive character of $\Fq.$ Then
$$
\left|\sum_{\al\in\Fq}\chi\bigl(f(\al)\bigr)\right|
\le (m-1)q^{1/2}.
$$
\end{lemma}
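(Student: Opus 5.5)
This is Weil's theorem as cited from \cite{LM13}. A self-contained proof is far beyond the elementary methods used so far in the paper, so the plan is to follow the standard $L$-function route and isolate the one deep input. Write $S_s(\chi)=\sum_{\al\in\FF_{q^s}}\chi\bigl(\Tr_{\FF_{q^s}/\Fq}(f(\al))\bigr)$ for the lifted sums over the extensions $\FF_{q^s}$, so that $S_1(\chi)$ is the sum in the statement. Form the $L$-function
$$
L(f,\chi;z)=\exp\Bigl(\sum_{s\ge1}\frac{S_s(\chi)}{s}z^s\Bigr).
$$

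The first, elementary, step is to show that $L(f,\chi;z)$ is a polynomial in $z$ of degree exactly $m-1$. For this I would use the equivalent product form
$$
L(f,\chi;z)=\sum_{h\ \mathrm{monic}}\lambda(h)\,z^{\deg h},
$$
where $\lambda(h)=\chi\bigl(\text{sum of } f \text{ over the roots of } h\bigr)$ and the roots are counted with multiplicity. The key facts are that $\lambda$ is multiplicative, and that $\lambda(h)$ depends only on the first $m$ power sums of the roots of $h$, hence only on the top $m$ coefficients of $h$ via Newton's identities. Since $\gcd(m,q)=1$, Newton's identities can be inverted at level $m$. Orthogonality (Theorem~\ref{5.4}) then shows that the coefficient of $z^k$ vanishes for $k\ge m$, and a direct computation shows that the coefficient of $z^{m-1}$ is nonzero.

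Writing $L(f,\chi;z)=\prod_{j=1}^{m-1}(1-\omega_j z)$ and taking logarithmic derivatives gives
$$
S_s(\chi)=-(\omega_1^s+\cdots+\omega_{m-1}^s)\qquad\text{for all } s\ge1.
$$
In particular $|S_1(\chi)|\le\sum_j|\omega_j|$. So it suffices to prove $|\omega_j|\le q^{1/2}$ for every $j$.

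This last step is the main obstacle: it is the Riemann hypothesis for the Artin--Schreier curve $y^p-y=f(x)$, and no earlier result in the paper supplies it. I would proceed as follows.
\begin{enumerate}
\item Using the identity above, the bound $|\omega_j|\le q^{1/2}$ for all $j$ is equivalent to $|S_s(\chi)|\le C\,q^{s/2}$ for all $s$, with $C$ independent of $s$. Indeed, the power series $\sum_s S_s z^s=-\sum_j\omega_j z/(1-\omega_j z)$ has radius of convergence $\min_j|\omega_j|^{-1}$, and a uniform bound of this form pushes that radius up to at least $q^{-1/2}$.
\item Relate $\sum_{\chi\ne\chi_0}S_s(\chi)$ to the number of $\FF_{q^s}$-points on the curve, via the same orthogonality used in Theorem~\ref{5.6}. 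The Stepanov--Bombieri auxiliary-polynomial method then gives $\#C(\FF_{q^s})=q^s+O(q^{s/2})$ once $q^s$ is large.
\item Separate the individual characters, either by applying the same count to the twisted curves $y^p-y=cf(x)$ with $c\in\Fq^\times$, or by working with the Frobenius eigenvalues on the relevant isotypic piece.
\end{enumerate}
If a fully self-contained argument is not required, I would simply cite Weil's proof of the Riemann hypothesis for curves at this point, as \cite{LM13} does.
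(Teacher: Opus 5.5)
The paper gives no proof of this lemma; it simply quotes Weil's bound from \cite{LM13}. Your outline is the standard argument behind that citation, and like the paper it takes the deep input from the literature, so it is consistent with what the paper does. That argument first shows $L(f,\chi;z)$ is a polynomial of degree $m-1$, via Newton's identities and $p\nmid m$, and then takes the bound $|\omega_j|\le q^{1/2}$ from Weil or Stepanov--Bombieri.

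One correction to steps (ii)--(iii) concerns the curve you use. The number of $\FF_{q^s}$-points on $y^p-y=cf(x)$ is $q^s+\sum_{t\in\FF_p^\times}S_s(\chi_{tc})$, where $\chi_b(a)=\chi_1(ba)$. The full sum over all $\chi\neq\chi_0$ instead comes from $y^q-y=f(x)$. So the scaling twists do not isolate a single character when $p>2$.

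No isolation is needed, however. Write $\omega_{\chi,j}$ for the reciprocal roots of $L(f,\chi;z)$. Every $\omega_{\chi,j}$ enters such an aggregate with the same sign $-1$, so no poles of $\sum_s(\cdot)z^s$ can cancel. Your radius-of-convergence argument from (i), applied to the aggregate, then gives $|\omega_{\chi,j}|\le q^{1/2}$ for every $\chi$ in the coset $c\,\FF_p^\times$ at once.
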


\begin{lemma}[\cite{LM13}]\label{thm:binomial-weil}
Let $\chi$ be a nontrivial additive character of $\Fq,$ let $m\in\N,$ and let $d=\gcd(m,q-1).$ Then
$$
\left|\sum_{\al\in\Fq}\chi(a\al^m+b)\right|
\le (d-1)q^{1/2}
$$
for any $a,b\in\Fq$ with $a\neq 0.$
\end{lemma}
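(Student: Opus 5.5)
The plan is to strip off the constant $b$, rewrite the sum over $\al$ as a sum over the values $t=\al^m$ using multiplicative characters, and then recognise Gauss sums whose absolute values are known. Since $\chi$ is additive, $\chi(a\al^m+b)=\chi(b)\chi(a\al^m)$ and $|\chi(b)|=1$, so it suffices to bound $\left|\sum_{\al\in\Fq}\chi(a\al^m)\right|$.

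\medskip
\noindent
\textbf{Counting fibres of $\al\mapsto\al^m$.} I first need, for every $t\in\Fq^\times$,
$$
\#\{\al\in\Fq\mid \al^m=t\}=\sum_{\psi^{d}=\psi_0}\psi(t).
$$
This is the $\Fq$-version of Lemma~\ref{multcharsol}, already used in the proof of Theorem~\ref{thm:multchar-binomial}. I will justify it briefly:
\begin{enumerate}
\item Since $\Fq^\times$ is cyclic, the image of $\al\mapsto\al^m$ on $\Fq^\times$ is the subgroup $H$ of $d$-th powers, of index $d$.
\item Each nonempty fibre has size $d$, by Lemma~\ref{lem:binomial-solvability}.
\item The characters with $\psi^d=\psi_0$ are exactly those trivial on $H$; there are $d$ of them.
\item By Theorem~\ref{5.4} applied to $\Fq^\times/H$, their sum equals $d$ on $H$ and $0$ off $H$.
\end{enumerate}
The point $\al=0$ is counted separately.

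\medskip
\noindent
\textbf{Splitting into Gauss sums.} Using the fibre count,
$$
\sum_{\al\in\Fq}\chi(a\al^m)=1+\sum_{t\in\Fq^\times}\sum_{\psi^d=\psi_0}\psi(t)\chi(at)
=1+\sum_{\psi^d=\psi_0}\ \sum_{t\in\Fq^\times}\psi(t)\chi_a'(t),
$$
where $\chi_a'(t):=\chi(at)$ is a nontrivial additive character because $a\neq0$. For the trivial character $\psi_0$, the inner sum is $\sum_{t\neq0}\chi_a'(t)=-1$ by Theorem~\ref{5.4}, and this cancels the leading $1$. For nontrivial $\psi$ we have $\psi(0)=0$, so the inner sum is exactly the Gauss sum $G(\psi,\chi_a')$. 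Hence
$$
\sum_{\al\in\Fq}\chi(a\al^m)=\sum_{\substack{\psi^d=\psi_0\\ \psi\neq\psi_0}}G(\psi,\chi_a').
$$

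\medskip
\noindent
\textbf{Bounding each term.} For $\psi\neq\psi_0$ and $\chi_a'\neq\chi_0$, combining Lemma~\ref{lem:gauss-properties}(ii) and (iii) gives
$$
\psi(-1)\,q=G(\psi,\chi_a')\,G(\overline{\psi},\chi_a')=\psi(-1)\,\bigl|G(\psi,\chi_a')\bigr|^2,
$$
so $|G(\psi,\chi_a')|=q^{1/2}$. There are exactly $d-1$ nontrivial characters with $\psi^d=\psi_0$. The triangle inequality then yields the bound $(d-1)q^{1/2}$. When $d=1$ the sum is empty, so the sum equals $0$, which is consistent with Lemma~\ref{permutationzero}.

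\medskip
\noindent
The main step needing care is the fibre-counting identity over a general $\Fq$, since the paper only cites it for prime fields. The cyclicity argument above handles it directly. The rest is bookkeeping with the trivial-character term and the convention $\psi(0)=0$.
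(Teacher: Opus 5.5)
Your proof is correct and is essentially the standard argument behind this result in Lidl--Niederreiter, which the paper cites without giving a proof. After factoring out $\chi(b)$, you write the sum as the sum of the $d-1$ Gauss sums $G(\psi,\chi_a')$ over nontrivial $\psi$ with $\psi^d=\psi_0$, each of absolute value $q^{1/2}$; your cyclicity argument for the fibre count over general $\Fq$ properly fills the gap left by Lemma~\ref{multcharsol} being stated only for prime fields.
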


\noindent
We now apply the binomial estimate to a special form of $Q$, where the equation obtained after setting $X=0$ is binomial in $Y$.
\begin{prop}
Suppose that $P(0,Z)$ has $r$ distinct roots in $\Fq.$ If $Q(X,Y,Z)=XQ_1(X,Y,Z) \allowbreak +Y^m+b(Z),$ where $Q_1(X,Y,Z)\in\Fq[X,Y,Z],
~
b(Z)\in\Fq[Z],$ $n\in\N$ and $d=\gcd(m,q-1),$ then
$$
|N_0'-r|\le r(d-1)\sqrt q.
$$
In particular,
$$
N_0'=r+O\!\bigl(r(d-1)\sqrt q\bigr).
$$
\end{prop}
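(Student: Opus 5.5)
We will start from the additive-character formula of Theorem~\ref{thm:additive-master-double}. Setting $X=0$ gives $Q(0,\be,\g)=\be^m+b(\g)$ for every root $\g$ of $P(0,Z)$, so
$$
N_0'-r=\frac{1}{q}\sum_{\substack{\g\in\Fq\\ P(0,\g)=0}}\ \sum_{s\in\Fq^\times}\ \sum_{\be\in\Fq}\chi_1\bigl(s\be^m+sb(\g)\bigr).
$$
The plan is to bound the innermost sum uniformly in $s$ and $\g$, and then count the terms.

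For fixed $s\in\Fq^\times$ and a fixed root $\g$, the inner sum is $\sum_{\be}\chi_1(a\be^m+c)$ with $a=s\neq0$ and $c=sb(\g)$. Lemma~\ref{thm:binomial-weil} then gives absolute value at most $(d-1)q^{1/2}$. Summing the absolute values over the $q-1$ values of $s$ and the $r$ roots $\g$, and dividing by $q$, yields
$$
|N_0'-r|\le \frac{r(q-1)(d-1)\sqrt q}{q}\le r(d-1)\sqrt q .
$$
The ``in particular'' statement is just this inequality rewritten in $O$-notation.

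There is no serious obstacle. The one point to check is that the hypothesis $a\neq0$ of Lemma~\ref{thm:binomial-weil} holds, which it does because $s$ ranges over $\Fq^\times$. The additive shift $sb(\g)$ only multiplies the sum by the unimodular factor $\chi_1(sb(\g))$, so it is harmless.

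We also check the degenerate case $d=1$. Here the bound forces $N_0'=r$, which agrees with Proposition~\ref{prop:power-and-shifted-power}, since $Y^m$ is then a permutation polynomial. The stray symbol $n\in\N$ in the statement plays no role, and the exponent $m$ is the one appearing in $Q$.
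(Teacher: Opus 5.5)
Your proposal is correct and matches the paper's proof. The paper likewise writes $N_0'-r$ using Theorem~\ref{thm:additive-master-double} and bounds each inner sum by $(d-1)\sqrt q$ via Lemma~\ref{thm:binomial-weil} with $a=s$ and $b=sb(\g)$. It then applies the triangle inequality over the $r(q-1)$ pairs $(\g,s)$ to obtain $\frac{r(q-1)(d-1)\sqrt q}{q}\le r(d-1)\sqrt q$.
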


\begin{proof}
By Theorem~\ref{thm:additive-master-double},
$$
N_0'-r
=
\frac{1}{q}
\sum_{\substack{\g\in\Fq\\ P(0,\g)=0}}
\sum_{s\in\Fq^\times}
\sum_{\be\in\Fq}
\chi_1\bigl(s(\be^m+b(\g))\bigr).
$$
For each fixed $s\in\Fq^\times$ and $\g\in\Fq$ with $P(0,\g)=0,$ Lemma~\ref{thm:binomial-weil} applied with $a=s$ and $b=sb(\g)$ gives
$$
\left|
\sum_{\be\in\Fq}\chi_1\bigl(s(\be^m+b(\g))\bigr)
\right|
\le (d-1)\sqrt q.
$$
Hence, by the triangle inequality,
$$
|N_0'-r|
\le
\frac{1}{q}\cdot r\cdot(q-1)\cdot(d-1)\sqrt q
\le
r(d-1)\sqrt q.
$$
The $O$-estimate follows immediately.
\end{proof}

\noindent
\noindent
For $q=2^n$, we use the following additive-character estimate.

\begin{lemma}[\cite{Gangopadhyay2003}] \label{2^n}
For any nontrivial additive character $\chi$ of $\mathbb{F}_{2^n}$,
$$
\left|\sum_{\al\in \mathbb{F}_{2^n}} \chi\bigl(g(\al)\bigr)\right|
\le 2^{\,n-t+1},
$$
where
$$
g(X)=X^{c}+\sum_{i=1}^{N} b_i X^{a_i(2^{t}-1)}, 
$$
$c$ is coprime to $2^{n}-1$, $t\mid n$, $N$ is a positive integer, $b_i\in \mathbb{F}_{2^n}$ and $a_i$ is a positive integer for all $i=1,\dots,N$.
\end{lemma}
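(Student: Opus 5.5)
The plan is to exploit the multiplicative action of the subfield $\mathbb{F}_{2^t}^\times\subseteq \mathbb{F}_{2^n}^\times$, which exists because $t\mid n$. Write $\chi(a)=\chi_1(\lambda a)$ with $\lambda\neq0$, and set
$$
h(X)=\sum_{i=1}^N b_iX^{a_i(2^t-1)},\qquad S=\sum_{\al\in\mathbb{F}_{2^n}}\chi\bigl(g(\al)\bigr).
$$
The key observation is that $u^{2^t-1}=1$ for every $u\in\mathbb{F}_{2^t}^\times$. Hence $h(u\al)=h(\al)$, and therefore
$$
g(u\al)=u^c\al^c+h(\al).
$$
Since $(2^t-1)\mid(2^n-1)$ and $\gcd(c,2^n-1)=1$, we also have $\gcd(c,2^t-1)=1$. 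So $u\mapsto u^c$ permutes $\mathbb{F}_{2^t}^\times$.

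First I separate off the term $\al=0$, which has absolute value $1$. For the remaining terms, note that $\al\mapsto u\al$ permutes $\mathbb{F}_{2^n}^\times$ for each fixed $u$. Averaging over $u\in\mathbb{F}_{2^t}^\times$ therefore gives
$$
\sum_{\al\neq0}\chi\bigl(g(\al)\bigr)=\frac{1}{2^t-1}\sum_{\al\neq0}\chi\bigl(h(\al)\bigr)\sum_{u\in\mathbb{F}_{2^t}^\times}\chi_1\bigl(\lambda u\al^c\bigr),
$$
where I have used that $u^c$ runs over $\mathbb{F}_{2^t}^\times$ as $u$ does. The inner sum equals $\sum_{u\in\mathbb{F}_{2^t}}\chi_1(\lambda\al^c u)-1$. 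By transitivity of the trace, $\Tr(\lambda\al^c u)=\Tr_{\mathbb{F}_{2^t}}\bigl(u\,\Tr_{\mathbb{F}_{2^n}/\mathbb{F}_{2^t}}(\lambda\al^c)\bigr)$. Orthogonality on $\mathbb{F}_{2^t}$ then shows that the inner sum is $2^t-1$ if $\Tr_{\mathbb{F}_{2^n}/\mathbb{F}_{2^t}}(\lambda\al^c)=0$, and $-1$ otherwise.

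Next I take absolute values and use $|\chi(h(\al))|=1$. This gives
$$
|S|\le 1+\frac{1}{2^t-1}\Bigl[(2^t-1)A+B\Bigr],
$$
where $A$ counts the $\al\neq0$ with $\Tr_{\mathbb{F}_{2^n}/\mathbb{F}_{2^t}}(\lambda\al^c)=0$, and $B=(2^n-1)-A$. Because $\al\mapsto\lambda\al^c$ is a bijection of $\mathbb{F}_{2^n}^\times$, and the relative trace is a surjective $\mathbb{F}_{2^t}$-linear map with kernel of size $2^{n-t}$, we get $A=2^{n-t}-1$ and $B=2^n-2^{n-t}$. Substituting,
$$
|S|\le 1+(2^{n-t}-1)+\frac{2^{n-t}(2^t-1)}{2^t-1}=2^{n-t+1}.
$$

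The step that needs the most care is the averaging argument. I must check that $h$ is genuinely invariant under scaling by $\mathbb{F}_{2^t}^\times$, which holds because every exponent is a multiple of $2^t-1$. I must also check that the coprimality of $c$ with $2^n-1$ descends to coprimality with $2^t-1$, so that $u^c$ can be replaced by $u$. The rest is a direct count using the transitivity and surjectivity of the trace.
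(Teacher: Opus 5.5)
Your proof is correct and complete. The paper gives no argument of its own here. The lemma is imported from \cite{Gangopadhyay2003} and used only as a black box in the bound on $|N_0'-r|$ for $q=2^n$. So your proposal supplies a self-contained derivation rather than an alternative to one in the text.

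All the steps check out:
\begin{itemize}
\item $h(u\al)=h(\al)$ for $u\in\mathbb{F}_{2^t}^\times$, because every exponent of $h$ is a multiple of $2^t-1$.
\item $\gcd(c,2^t-1)=1$ follows from $(2^t-1)\mid(2^n-1)$.
\item The inner sum $\sum_{u\in\mathbb{F}_{2^t}^\times}\chi_1(\lambda u\al^c)$ is $2^t-1$ or $-1$, according as $\Tr_{\mathbb{F}_{2^n}/\mathbb{F}_{2^t}}(\lambda\al^c)$ vanishes or not.
\item The counts $A=2^{n-t}-1$ and $B=2^{n-t}(2^t-1)$ make the triangle inequality land exactly on $2^{n-t+1}$. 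This includes the degenerate case $t=n$, where the bound is $2$.
\end{itemize}

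The citation buys brevity. Your orbit-averaging argument buys transparency and generality, since nothing in it uses characteristic $2$. Replacing $2$ by $p$ throughout, for $g(X)=X^c+\sum_{i}b_iX^{a_i(p^t-1)}$ with $\gcd(c,p^n-1)=1$ and $t\mid n$, gives
$$
\Bigl|\sum_{\al\in\mathbb{F}_{p^n}}\chi\bigl(g(\al)\bigr)\Bigr|\le 1+(p^{n-t}-1)+p^{n-t}=2p^{n-t}.
$$
So the corresponding estimate on $N_0'$ in Section~\ref{S5} could be stated for every $q$, namely $|N_0'-r|\le 2r(q-1)/p^t$, rather than only for $q=2^n$.
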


\noindent
We now apply this estimate to the polynomial obtained after setting $X=0$.

\begin{theorem}
Let $P(0,Z) \in \Fq[Z]$ have $r$ distinct roots in $\Fq$ and $q = 2^n.$ Suppose $t \mid n$ and
$$
Q(X,Y,Z) =XQ_1(X,Y,Z)+ Y^c + \sum_{i=1}^{N} b_i Y^{a_i(2^t-1)} \in \Fq[X,Y,Z],
$$
where $b_i \in \Fq$, $a_i$ are positive integers, and $\gcd(c,2^n-1)=1.$ Then
$$
|N_0' - r| \le r\,2^{\,1-t}(2^{n}-1).
$$
\end{theorem}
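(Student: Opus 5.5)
The plan mirrors the proof of the binomial bound just above: start from the additive-character formula of Theorem~\ref{thm:additive-master-double}, bound each inner exponential sum by Lemma~\ref{2^n}, and finish with the triangle inequality.

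First note that here $Q(0,Y,Z)=g(Y):=Y^c+\sum_{i=1}^N b_iY^{a_i(2^t-1)}$ does not depend on $Z$. Theorem~\ref{thm:additive-master-double} then gives
$$
N_0'-r=\frac{1}{q}\sum_{\substack{\g\in\Fq\\ P(0,\g)=0}}\ \sum_{s\in\Fq^\times}\ \sum_{\be\in\Fq}\chi_1\bigl(s\,g(\be)\bigr).
$$
For each fixed $s\in\Fq^\times$, the map $a\mapsto\chi_1(sa)$ is a nontrivial additive character of $\mathbb{F}_{2^n}$. So the innermost sum is a sum of the form $\sum_{\al}\chi(g(\al))$ with $\chi$ nontrivial.

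The hypotheses of Lemma~\ref{2^n} are exactly what is assumed: $\gcd(c,2^n-1)=1$, $t\mid n$, and the other exponents are multiples $a_i(2^t-1)$ with $a_i$ positive integers. The lemma therefore bounds the innermost sum in absolute value by $2^{n-t+1}$, uniformly in $s$ and $\g$. The only point to check is that the lemma is stated for an arbitrary nontrivial character, not just the canonical one, so it applies to $\chi_1(s\,\cdot)$ for every $s\in\Fq^\times$. This is the step I expect to require care, though it is immediate from the lemma's wording.

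By the triangle inequality over the $r$ roots $\g$ and the $q-1=2^n-1$ values of $s$,
$$
|N_0'-r|\le \frac{1}{2^n}\cdot r\cdot(2^n-1)\cdot 2^{\,n-t+1}=r\,2^{\,1-t}(2^n-1),
$$
which is the claimed bound.
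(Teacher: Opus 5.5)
Your proposal is correct and is essentially the paper's own proof. You apply Theorem~\ref{thm:additive-master-double}, bound each inner sum $\sum_{\be}\chi_1(s\,g(\be))$ by $2^{n-t+1}$ via Lemma~\ref{2^n} (valid for every nontrivial character, so for each $\chi_1(s\,\cdot)$ with $s\neq0$), and sum the $r(q-1)$ terms with the triangle inequality to get $\frac{1}{2^n}\,r(2^n-1)2^{n-t+1}=r\,2^{1-t}(2^n-1)$, exactly as the paper does.
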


\begin{proof}
From Theorem \ref{thm:additive-master-double} and Lemma \ref{2^n}, we get
$$|N_0'-r| \le \frac{1}{q}\, r (q-1) 2^{\,n-t+1}.$$

\noindent 
Substituting $q=2^n$ gives the required bound.
\end{proof}

\noindent
We now sharpen the preceding binomial bound by returning to the exact formula for $N_0'$ in this case.

\begin{theorem}\label{drbound}
Assume that $P(0,Z)\in\Fq[Z]$ has $r$ distinct roots in $\Fq$. Let $Q(X,Y,Z)=XQ_1(X,Y,Z)+Y^m-f(Z),$ where $Q_1(X,Y,Z)\in\Fq[X,Y,Z]$, $f(Z)\in\Fq[Z]$,
$m\geq2$, and $\deg_YQ_1<m$. Let $d=\gcd(m,q-1).$ Define $n_0=\#\{\g\in\Fq\mid P(0,\g)=0,\ f(\g)=0\}.$ Then
$$
n_0\le N_0'\le dr-(d-1)n_0\le dr.
$$
\end{theorem}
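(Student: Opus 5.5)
The plan is to start from the exact formula in Theorem~\ref{prop:Ym-aZm}(i), applied with $b(Z)=f(Z)$. Its hypotheses match those of the present statement ($Q=XQ_1+Y^m-f(Z)$, $\deg_Y Q_1<m$, $m\ge2$), and it gives
$$
N_0'=n_0+d\,n_1,
\qquad
n_1=\#\bigl\{\g\in\Fq\mid P(0,\g)=0,\ f(\g)\neq0,\ f(\g)^{(q-1)/d}=1\bigr\}.
$$
The quantity $n_0$ here coincides with the $n_0$ of the statement. So everything reduces to bounding $n_1$ between $0$ and the number of remaining roots.

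For the lower bound, note that $d\ge1$ and $n_1\ge0$, so $N_0'\ge n_0$ immediately. For the upper bound, partition the $r$ distinct roots of $P(0,Z)$ in $\Fq$ into those with $f(\g)=0$ (there are $n_0$ of them) and those with $f(\g)\neq0$ (there are $r-n_0$). The set counted by $n_1$ lies inside the second class, so $n_1\le r-n_0$. Hence
$$
N_0'=n_0+d\,n_1\le n_0+d(r-n_0)=dr-(d-1)n_0.
$$
Finally, $d\ge1$ and $n_0\ge0$ give $(d-1)n_0\ge0$, which yields the last inequality $dr-(d-1)n_0\le dr$.

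There is no real obstacle. The only point that needs care is checking that Theorem~\ref{prop:Ym-aZm}(i) applies verbatim, including the sign convention $Y^m-f(Z)$ matching $Y^m-b(Z)$. Alternatively, one can rederive the per-root count directly from Proposition~\ref{thm:rootwise-reduction} and Lemma~\ref{lem:binomial-solvability}: each root $\g$ contributes $1$ if $f(\g)=0$, and either $0$ or $d$ if $f(\g)\neq0$. That gives the same two inequalities root by root.
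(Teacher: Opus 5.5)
Your proposal is correct and follows the paper's proof essentially line for line: invoke Theorem~\ref{prop:Ym-aZm}(i) to get $N_0'=n_0+d\,n_1$, read off $N_0'\ge n_0$, and use $n_1\le r-n_0$ to obtain $N_0'\le dr-(d-1)n_0\le dr$. You include $f(\gamma)\neq0$ explicitly in the definition of $n_1$, which matches Theorem~\ref{prop:Ym-aZm} exactly and makes the disjointness behind $n_1\le r-n_0$ transparent; the paper's proof omits this condition, which is harmless since $f(\gamma)^{(q-1)/d}=1$ already forces $f(\gamma)\neq0$.
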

\begin{proof}
Let $n_1=\#\{\g\in\Fq\mid P(0,\g)=0,\ f(\g)^{(q-1)/d}=1\}.$
Since $P(0,Z)$ has $r$ distinct roots in $\Fq$, we have
$$
n_0+n_1\le r.
$$
Thus $n_1\le r-n_0.$ By Theorem~\ref{prop:Ym-aZm}, we have
$$
N_0'=n_0+dn_1.
$$
Since $n_1\ge 0$, this gives $N_0'\ge n_0.$ On the other hand, using $n_1\le r-n_0$, we get
\begin{align*}
N_0'
&=
n_0+dn_1\\
&\le
n_0+d(r-n_0)\\
&=
dr-(d-1)n_0.
\end{align*}
Since $n_0\ge 0$, we also have $dr-(d-1)n_0\le dr.$ Therefore
$$
n_0\le N_0'\le dr-(d-1)n_0\le dr.
$$
\end{proof}

\begin{prop}{\label{prop:upper-bound-attained}}
Let $Q(X,Y,Z)$, $r$, and $d$ be as in Theorem~\ref{drbound}. Suppose that for every root $\g$ of $P(0,Z)$ in $\Fq$, $f(\g)\in(\Fq^\times)^m.$ Then $N_0'=dr$, so the upper bound in Theorem~\ref{drbound} is attained. \end{prop}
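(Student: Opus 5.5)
The plan is to apply the exact formula of Theorem~\ref{prop:Ym-aZm}\,(i) directly. The reduced polynomial $Y^m-f(Z)$ has the shape $Y^m-b(Z)$ with $b=f$, so that result gives $N_0'=n_0+d\,n_1$. Here $n_0$ counts the roots $\g$ of $P(0,Z)$ with $f(\g)=0$. The quantity $n_1$ counts those roots with $f(\g)\neq0$ and $f(\g)^{(q-1)/d}=1$. It therefore suffices to show that, under the hypothesis, $n_0=0$ and $n_1=r$.

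First, the hypothesis $f(\g)\in(\Fq^\times)^m$ means in particular $f(\g)\neq 0$ for every root $\g$ of $P(0,Z)$. Hence $n_0=0$.

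Second, write $f(\g)=c^m$ with $c\in\Fq^\times$. Since $d\mid m$, we get
$$
f(\g)^{(q-1)/d}=c^{m(q-1)/d}=\bigl(c^{q-1}\bigr)^{m/d}=1 .
$$
So every one of the $r$ roots is counted in $n_1$, and $n_1=r$. Alternatively, one can bypass the criterion in Lemma~\ref{lem:binomial-solvability}: the equation $Y^m=c^m$ already has the solution $c$, so that lemma gives exactly $d$ solutions for each root $\g$.

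Combining the two steps gives $N_0'=0+d\cdot r=dr$. This coincides with the upper bound $dr-(d-1)n_0$ of Theorem~\ref{drbound} at $n_0=0$, so the bound is attained.

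There is no real obstacle here. The only point to check is the divisibility $d\mid m$, which holds since $d=\gcd(m,q-1)$ divides $m$, so that $m(q-1)/d$ is a multiple of $q-1$.
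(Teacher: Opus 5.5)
Your proposal is correct and follows essentially the same route as the paper: both deduce $n_0=0$ and $n_1=r$ from the hypothesis and then apply the formula $N_0'=n_0+d\,n_1$ of Theorem~\ref{prop:Ym-aZm}. You also write out why $f(\g)^{(q-1)/d}=1$ holds, namely $f(\g)=c^m$ with $d\mid m$, which the paper asserts without justification.
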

\begin{proof}
Since $f(\g)\in(\Fq^\times)^m$ for every root $\g$ of $P(0,Z)$ in $\Fq$, we have $f(\g)\neq 0$ and $f(\g)^{(q-1)/d}=1.$ Hence $n_0=0$ and $n_1=r.$ Further, by Theorem~\ref{prop:Ym-aZm}, $N_0'=n_0+dn_1.$ Therefore $N_0'=dr.$
\end{proof}

\begin{ex}
Consider the field $\F_{17}$. Take $P(0,Z)=Z^8-1$ and $Q(X,Y,Z)=XQ_1(X,Y,Z)+Y^6-Z,$ where $Q_1(X,Y,Z)\in\F_{17}[X,Y,Z]$ and $\deg_YQ_1<6$. Thus $m=6$ and $f(Z)=Z$. The roots of $P(0,Z)$ in $\F_{17}$ are
$$
I=\{1,2,4,8,9,13,15,16\},
$$
so $r=8$. Moreover, $d=\gcd(6,16)=2.$ Indeed, the following table shows that every element of $I$ is a sixth power in $\F_{17}^{\times}$:
\begin{table}[h]
\centering
\setlength{\tabcolsep}{12pt}
\renewcommand{\arraystretch}{1.3}
\begin{tabular}{|c|c|c|c|c|c|c|c|}
\hline
$1$ & $2$ & $4$ & $8$ & $9$ & $13$ & $15$ & $16$ \\
\hline
$1^6$ & $5^6$ & $8^6$ & $6^6$ & $7^6$ & $2^6$ & $3^6$ & $4^6$ \\
\hline
\end{tabular}
\end{table}

\noindent
Since $f(Z)=Z$, it follows that $f(\g)=\g\in(\F_{17}^{\times})^6$ for every $\g\in I$. Hence, by Proposition~\ref{prop:upper-bound-attained}, $N_0'=dr=2\cdot 8=16.$ Thus the upper bound in Theorem~\ref{drbound} is attained.
\end{ex}

\section{A Macaulay2 Algorithm}{\label{S6}}
\noindent\textbf{Algorithm:}
Let $q$ be a prime power and let $R=\mathbb F_q[X,Y,Z,T]$ be the polynomial ring over the finite field $\mathbb F_q$. Let $P(X,Z)\in \mathbb F_q[X,Z]$ be monic in $Z$, and let $Q(X,Y,Z)\in \mathbb F_q[X,Y,Z]$ be monic in $Y$. Consider the double Danielewski surface $S_{d_1,d_2}\subseteq \mathbb A^4_{\mathbb F_q}$ defined by
$$
X^{d_1}Y=P(X,Z),
\qquad
X^{d_2}T=Q(X,Y,Z).
$$
\noindent
To count the $\mathbb F_q$-rational points of $S_{d_1,d_2}$, we consider the ideal
$$
I=
\left(
X^{d_1}Y-P(X,Z),
X^{d_2}T-Q(X,Y,Z),
X^q-X,
Y^q-Y,
Z^q-Z,
T^q-T
\right)
\subseteq R.
$$

\noindent\textbf{Input:}

\begin{enumerate}
\item Define the finite field $K=\mathbb F_q$ and the polynomial ring $R=K[X,Y,Z,T].$ We will consider the lexicographic monomial ordering $Y>Z>X>T.$

\item Define the polynomials $P(X,Z)$ and $Q(X,Y,Z)$ whose $\Fq$-rational points we want to compute.

\item For fixed positive integers $d_1$ and $d_2,$ define the corresponding ideal
$$
I=
\left(
X^{d_1}Y-P(X,Z),
X^{d_2}T-Q(X,Y,Z),
X^q-X,
Y^q-Y,
Z^q-Z,
T^q-T
\right).
$$

\item Compute a Gr\"obner basis $J$ of the ideal $I$.

\item Compute the dimension of the quotient ring $R/( J).$
\end{enumerate}

\noindent\textbf{Output:}
The dimension $\dim_{\mathbb F_q}\left(R/( J)\right),$ is equal to the $\Fq$-rational points of $S_{d_1,d_2}(\mathbb F_q).$ Thus the algorithm returns the number of $\mathbb F_q$-rational points on the double Danielewski surface $S_{d_1,d_2}$.

\medskip

\noindent\textbf{Example:}
Consider the double Danielewski surface $S_{2,3}\subseteq \mathbb A^4_{\mathbb F_q}.$ We consider the polynomials $P(X,Z)=Z^5+X^4Z+Z^3X^2$ and $Q(X,Y,Z)=Y^7+Y^5ZX+Y^2XZ^4+Z^4X^3.$ Thus the corresponding ideal is
$$
I=
\left(
X^2Y-P(X,Z),
X^3T-Q(X,Y,Z),
X^q-X,
Y^q-Y,
Z^q-Z,
T^q-T
\right)
\subseteq \mathbb F_q[X,Y,Z,T].
$$
\noindent
By Theorem \ref{thm:q2-homog}, the number of $\mathbb F_q$-rational points on this surface is $q^2.$ We verify this computationally using Macaulay2 \cite{M2}.

\begin{verbatim}
i1 : dDanSurface = q -> (
         K = GF(q);
         R = K[y,z,x,t, MonomialOrder => Lex];
         P = z^5 + x^4*z + z^3*x^2;
         Q = y^7 + y^5*z*x + y^2*x*z^4 + z^4*x^3;
         I = ideal(x^2*y - P, x^3*t - Q, x^q - x, y^q - y, z^q - z, t^q - t);
         J = groebnerBasis I;
         (q, #flatten entries basis(R/ideal J))
     )

o1 = dDanSurface
o1 : FunctionClosure
i2 : primePowers = {2,3,4,5,7}
o2 = {2, 3, 4, 5, 7}
o2 : List
i3 : results = apply(primePowers, q -> dDanSurface(q))
o3 = {(2, 4), (3, 9), (4, 16), (5, 25), (7, 49)}
o3 : List
i4 : dDanSurface(9)
o4 = (9, 81)
o4 : Sequence
\end{verbatim}
The output is of the form
$
(q,\#S_{2,3}(\mathbb F_q)),
$
where $q$ denotes the finite field $\mathbb F_q.$ For the prime powers $q=2,3,4,5,7,9,$ the computation gives $
(2,4),~
(3,9),~
(4,16),~
(5,25),~
(7,49)$ and $
(9,81).
$ Thus, in each case $\#S_{2,3}(\mathbb F_q)=q^2.$

\section{Polygonal numbers}\label{S7}
\noindent
We now show that, for suitable choices of the polynomials $P(X,Z)$ and $Q(X,Y,Z)$, the resulting surfaces generate the sequence of polygonal numbers.

\begin{defi}
Let $m\ge 3$ be an integer.  A polygonal number associated with an $m$-gon is called an $m$-gonal number. The $n$-th $m$-gonal number, denoted by $S_m(n),$ is defined by
$$
S_m(n)=\sum_{k=0}^{n-1}\bigl(1+k(m-2)\bigr).
$$
Equivalently,
$$
S_m(n)=\frac{n}{2}\bigl((m-2)n-(m-4)\bigr).
$$
\end{defi}

\noindent
Thus $S_4(n)=n^2,$ $S_6(n)=2n^2-n,$ $S_8(n)=3n^2-2n,$ and $S_{10}(n)=4n^2-3n$ give the square, hexagonal, octagonal, and decagonal numbers, respectively. For $n=q=p^r,$ these formulas give prime-power-indexed subsequences.

\medskip
\noindent
Further, the following theorem realizes the prime-power-indexed square, hexagonal, octagonal, and decagonal sequences as $\Fq$-rational point counts of suitabledouble Danielewski surfaces.

\begin{theorem}\label{thm:polygonal-examples}
Let $q=p^n$ and consider the double Danielewski surface
$$
S_{d_1,d_2}
=
\{(\al,\be,\g,\ka)\in\A^4_{\Fq}\mid
\al^{d_1}\be=P(\al,\g),~
\al^{d_2}\ka=Q(\al,\be,\g)\}.
$$
Then, for the following choices of $P(0,Z)$ and $Q(0,Y,Z),$ the number of $\Fq$-rational points on $S_{d_1,d_2}$ is as follows:
\begin{center}
\renewcommand{\arraystretch}{1.35}
\begin{tabular}{|c|c|c|c|c|}
\hline
No.
&
$P(0,Z)$
&
$Q(0,Y,Z)$
&
$\#S_{d_1,d_2}(\Fq)$
&
Polygonal type
\\
\hline
1
&
$Z^2$
&
$Y^2$
&
$q^2$
&
Square
\\
\hline
2
&
$Z^q-Z$
&
$Y^2$
&
$2q^2-q$
&
Hexagonal
\\
\hline
3
&
$Z^q-Z$
&
$Y^2-ZY$
&
$3q^2-2q$
&
Octagonal
\\
\hline
4
&
$Z^q-Z$
&
$Y^4-Y+Z(Y^2-Y)$
&
$4q^2-3q$
&
Decagonal
\\
\hline
\end{tabular}
\end{center}
Thus these point counts give the prime-power-indexed subsequences, with $q=p^n,$ of the square, hexagonal, octagonal, and decagonal numbers,
respectively.
\end{theorem}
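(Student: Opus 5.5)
The plan is to reduce each row of the table to a computation of $N_0'$ and then apply Corollary~\ref{cor:double-total-count}, which gives $\#S_{d_1,d_2}(\Fq)=q(q-1+N_0')$. Since $N_0'$ depends only on the reduced polynomials $P(0,Z)$ and $Q(0,Y,Z)$, the higher-order parts of $P$ and $Q$ play no role. The target polygonal numbers $q^2$, $2q^2-q$, $3q^2-2q$ and $4q^2-3q$ correspond to the values
$$
N_0'=1,\qquad N_0'=q,\qquad N_0'=2q-1,\qquad N_0'=3q-2.
$$
So for each row it suffices to verify the corresponding value of $N_0'$.

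\textbf{Rows 1 and 2.}
\begin{enumerate}
\item Here $P(0,Z)=Z^2$ has the single root $\g=0$, and $Q(0,Y,0)=Y^2$ has the single root $\be=0$. By Proposition~\ref{thm:rootwise-reduction} this gives $N_0'=1$. (Alternatively, this is the one-root reduction of Proposition~\ref{prop:one-root-reduction}, or Theorem~\ref{thm:q2-homog} when $P$ and $Q$ are taken homogeneous.)
\item Here $P(0,Z)=Z^q-Z$ has $r=q$ distinct roots, namely all of $\Fq$. Since $Q(0,Y,Z)=Y^2$ is a monomial, Corollary~\ref{hexa} gives $N_0'=q$.
\end{enumerate}

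\textbf{Row 3.} Again every $\g\in\Fq$ is a root of $P(0,Z)$, and $Q(0,Y,\g)=Y(Y-\g)$.
\begin{enumerate}
\item For $\g=0$ the only root in $Y$ is $\be=0$.
\item For each of the $q-1$ values $\g\neq0$ there are exactly two roots, $\be=0$ and $\be=\g$.
\end{enumerate}
Hence $N_0'=1+2(q-1)=2q-1$. This agrees with Corollary~\ref{cor:binomial-special}(i) with $m=2$, $n=1$, $b(Z)=Z$: there $d_{1}=1$ and $n_1=q-1$, so $N_0'=q+(q-1)$.

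\textbf{Row 4.} This is the main obstacle, because for general $\g$ the quartic $Q(0,Y,\g)$ has a root count depending on a quadratic factor. One can factor
$$
Y^4-Y+Z(Y^2-Y)=Y(Y-1)(Y^2+Y+1+Z),
$$
but the number of roots of $Y^2+Y+1+\g$ (and its overlaps with $0$ and $1$) varies with $\g$ and with the characteristic. Rather than count root by root in $\g$, I will exploit that $P(0,Z)=Z^q-Z$ imposes no condition on $\g$. Then $N_0'$ is simply the number of pairs $(\be,\g)\in\Fq^2$ with
$$
\g(\be^2-\be)=\be-\be^4.
$$
I will count these pairs by fixing $\be$ first, since the equation is linear in $\g$.
\begin{enumerate}
\item If $\be\notin\{0,1\}$, then $\be^2-\be\neq0$, so there is exactly one $\g$.
\item If $\be\in\{0,1\}$, then both sides vanish identically, so all $q$ values of $\g$ work.
\end{enumerate}
Therefore $N_0'=(q-2)+2q=3q-2$, and $\#S_{d_1,d_2}(\Fq)=q(q-1+3q-2)=4q^2-3q$. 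This argument is valid for every $q$, including $q=2$ where the first case is empty.

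Finally, I will check that the hypotheses of the section can be met. We need $P$ monic in $Z$ and $Q$ monic in $Y$ with degrees at least $2$. This is achieved by taking $P(X,Z)=P(0,Z)$ and $Q(X,Y,Z)=Q(0,Y,Z)$, or more generally by adding any multiple of $X$ that does not change the leading terms. This completes the identification with the square, hexagonal, octagonal and decagonal numbers at index $q$.
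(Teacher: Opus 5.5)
Your proposal is correct and follows essentially the same route as the paper. You reduce to $N_0'$ via Corollary~\ref{cor:double-total-count} and compute $N_0'$ directly in each row. Row~4 is handled exactly as in the paper: fix $Y$, note that $Y\in\{0,1\}$ works for all $q$ values of $Z$, and that each other $Y$ determines the unique value $Z=-Y^2-Y-1$.
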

\begin{proof}
By Corollary~\ref{cor:double-total-count}, we have
$$
|S_{d_1,d_2}(\Fq)|=q\bigl(q-1+N_0'\bigr),
$$
where
$N_0'=\#\{(Y,Z)\in\Fq^2\mid P(0,Z)=0,\ Q(0,Y,Z)=0\}.$
Thus it is enough to compute $N_0'$ in each case.

\begin{enumerate}
\item Suppose that
$P(0,Z)=Z^2$and $Q(0,Y,Z)=Y^2.$ Then $P(0,Z)=0$ gives $Z=0,$ and $Q(0,Y,0)=0$ gives $Y=0.$ Hence
$N_0'=1.$ Therefore,
$$
|S_{d_1,d_2}(\Fq)|
=
q(q-1+1)
=
q^2.
$$

\item Suppose that
$P(0,Z)=Z^q-Z$ and $ Q(0,Y,Z)=Y^2.$ Since every element of $\Fq$ is a root of $Z^q-Z,$ there are $q$ choices for $Z.$ For each such $Z,$ the equation $Q(0,Y,Z)=0$ gives $Y=0.$ Hence $N_0'=q.$
Therefore,
$$
|S_{d_1,d_2}(\Fq)|
=
q(q-1+q)
=
2q^2-q.
$$

\item Suppose that
$P(0,Z)=Z^q-Z$ and $ Q(0,Y,Z)=Y^2-ZY.$ Again $P(0,Z)=0$ gives all $Z\in\Fq.$ For a fixed $Z\in\Fq,$ we have $ Q(0,Y,Z)=Y^2-ZY=Y(Y-Z).$
If $Z=0,$ then $Q(0,Y,0)=Y^2,$ so there is exactly one root, namely $\be=0.$ If $Z\neq 0,$ then the two roots are $Y=0$ and $Y=Z.$ Hence
$ N_0'=1+2(q-1)=2q-1.$
Therefore,
$$
|S_{d_1,d_2}(\Fq)|
=
q(q-1+2q-1)
=
3q^2-2q.
$$

\item Suppose that $P(0,Z)=Z^q-Z$ and $Q(0,Y,Z)=Y^4-Y+Z(Y^2-Y).$ Since $P(0,Z)=Z^q-Z,$ every element $Z\in\Fq$ satisfies $P(0,Z)=0.$ We factor
$$
Q(0,Y,Z)
=
Y^4-Y+Z(Y^2-Y)
=
Y(Y-1)(Y^2+Y+Z+1).
$$
The factors $Y$ and $Y-1$ give the roots $Y=0$ and $Y=1$ for every
$Z\in\Fq.$ For the remaining factor, the equation $Y^2+Y+Z+1=0$ determines $Z=-Y^2-Y-1.$ Thus, for every $Y\in\Fq\setminus\{0,1\},$ there is exactly one corresponding $Z\in\Fq.$ Hence the total number of solutions is $ N_0'=2q+(q-2)=3q-2.$
Therefore,
$$
|S_{d_1,d_2}(\Fq)|
=
q(q-1+3q-2)
=
4q^2-3q.
$$

\end{enumerate}
This proves the stated point counts.
\end{proof}

\medskip
\noindent
\medskip
\noindent
A natural next direction is to find sequences of polynomials $P$ and $Q$ such that the reduced count $N_0'$ has a prescribed linear form in $q.$ 

\section*{Acknowledgements}
\noindent
The first author initiated this work while supported under the grant CRG/2022/007047 and later received financial support from the Department of Science and Technology, Government of India, under the WISE Post-Doctoral Fellowship Programme (Reference No.~DST/WISE-PDF/PM-114/2024(G)). The second author carried out part of this work at IIT Gandhinagar, first as a research intern under SRIP 2025, then as an M.Sc. student and subsequently as a Sabarmati Bridge Fellow. The third author was also supported by the grant CRG/2022/007047. All authors gratefully acknowledge the encouragement and support received from IIT Gandhinagar. AI assistance was used to generate images in this article and for drafting and language editing.


\end{document}